\documentclass[12pt, reqno]{amsart}

\usepackage[a4paper,margin=1in]{geometry}

\usepackage{amsmath,amssymb,amsfonts,amsthm}
\usepackage{mathrsfs}
\usepackage{bm}

\usepackage{enumerate}
\usepackage{hyperref}
\usepackage{appendix}
\hypersetup{
    colorlinks=true,
    linkcolor=blue,
    citecolor=blue,
    urlcolor=blue
}

\newtheorem{theorem}{Theorem}[section]
\newtheorem{lemma}[theorem]{Lemma}
\newtheorem{corollary}[theorem]{Corollary}
\newtheorem{proposition}[theorem]{Proposition}

\theoremstyle{remark}
\newtheorem{remark}[theorem]{Remark}
\newtheorem{definition}{Definition}

\newcommand{\rank}{\operatorname{rank}}

\newcounter{localreduction}
\renewcommand{\thelocalreduction}{\Roman{localreduction}}

\newenvironment{localreduction}[1][]
  {\refstepcounter{localreduction}%
   \par\medskip
   \noindent\textbf{(\thelocalreduction)}\ifx&#1&\else\ \textit{#1}\fi
   \par\smallskip}
  {\par\medskip}

\title{Submanifolds of higher rank with curvature normals of constant length}

\author{Santiago Castañeda-Montoya}
\address{Universidad de Antioquia, Colombia}
\email{santiago.castanedam@udea.edu.co}

\author{Guillermo Lobos}
\address{Departamento de Matemática, Universidade Federal de São Carlos, Brazil}
\email{lobos@ufscar.br}
  
\author{Carlos Olmos}
\address{Ciem-CONICET, ciudad universitaria, Córdoba, Argentina}
\email{carlos.olmos@unc.edu.ar}
\thanks{The first author was supported by the Universidad de
Antioquia. The second author was supported by UFSCar, and partially
supported by the FAPESP-CEPID Project 2024/00923-6 and FAPESP Project
2022/16097-2. The third author was supported by the visiting program
of PPGM, UFSCar, and partially supported by CIEM-CONICET}

\date{\today}

\subjclass[2020]{53C42; 53C25}
\keywords{submanifolds of higher rank, curvature normals of constant length}

\begin{document}

\begin{abstract}
We study the local geometry of Euclidean submanifolds of rank at
least two whose curvature normals with respect to the flat part of
the normal bundle have constant length, equivalently, whose adapted
third fundamental form has constant eigenvalues. The complete case
was fully settled by Di Scala and the third author, who showed that
such a complete submanifold has constant principal curvatures. Around
a regular point, we show that $M$ is generated, via parallel
manifolds, by a hypersurface $N$ whose extrinsic factors are
contained in spheres and, when of dimension at least two, have rank
one; conversely, such factors can be assembled to construct examples
of $M$. The proof relies on the isoparametric rank theorem to rule
out non-umbilical factors. As a further application, when the inner
products of the curvature normals are constant, we show that there
are exactly two eigendistributions, one of which is one-dimensional
and autoparallel, extending, for arbitrary codimension when the normal bundle is
flat, curvature-homogeneity results of Tsukada and the recent ones of Bryant, Florit,
and Ziller for hypersurfaces.
\end{abstract}

\maketitle

\section{Introduction}\label{intro}

The purpose of this paper is to study the local geometry of Euclidean
submanifolds of rank at least two, i.e., with the flat part of the normal
bundle of dimension at least two, whose curvature normals with respect
to the flat part $\nu_0M$ of the normal bundle have constant length. Equivalently, the third fundamental form with respect to $\nu _0M$ has constant eigenvalues. 

The complete case was settled by Di Scala and Olmos in \cite{DO}, where
they proved that a complete submanifold satisfying these conditions has
constant principal curvatures. They also provided a local counterexample
showing that the conclusion does not hold without completeness. Here we
describe the local structure in this setting.

Around a regular point, we obtain a distinguished one-dimensional
autoparallel eigendistribution whose orthogonal complement is integrable.
A local integral manifold $N$ of this orthogonal complement, together
with its parallel manifolds, gives a local description of $M$.
Moreover, $N$ inherits strong restrictions on its curvature normals and
extrinsic factors; in particular, it is contained in a sphere, and every
extrinsic factor of dimension at least two has rank one.

Our results are related to the rank theorem for
submanifolds: the homogeneous version \cite{O1} and its non-homogeneous
version \cite{DO}. The rank theorem was crucial in \cite{O2}.

A further consequence is obtained when the inner products of the
curvature normals are constant. In this case, if the principal
curvatures are not locally constant, then the number $g$ of distinct
curvature normals equals $2$, and one of the corresponding
eigendistributions is one-dimensional and autoparallel.

\smallskip

The third fundamental form is closely related to the Ricci tensor
through the formula expressing the latter in terms of this form and
the shape operator in the direction of the mean curvature vector. The
condition that the second fundamental form be algebraically constant
can be regarded as the extrinsic analogue of curvature homogeneity.

Curvature homogeneity was studied by Tsukada \cite{Ts} for hypersurfaces from an intrinsic perspective. In particular, he showed that, in Euclidean space, and in the sphere for hypersurfaces of dimension at least four, a curvature-homogeneous hypersurface which is neither isoparametric nor an extrinsic product must have constant sectional curvature. The remaining three-dimensional spherical case was recently settled by Bryant, Florit, and Ziller \cite{BFZ}, who found additional non-isoparametric examples. Their results are more general in that they are formulated intrinsically, whereas ours are extrinsic but allow arbitrary codimension, under the assumption that the normal bundle is flat. In this sense, the two approaches are complementary.

\medskip

We now state the main results, which summarize the content of the
paper.

\smallskip

\begin{theorem}\label{thm:main1}
Let $M^n$ be a locally irreducible and full submanifold of the Euclidean
space $\mathbb R^{n+p}$ whose curvature normals $\eta_1,\dots,\eta_g$
with respect to the flat part $\nu_0M$ of the normal bundle have constant
length, where $g\geq2$; their associated eigendistributions are denoted
by $E_1,\dots,E_g$. Assume that $M$ does not have constant principal
curvatures around any of its points. Then, near any regular point $p$, $M$ is
described as follows: let $\eta_k$ be a curvature normal of maximal
length among the non-parallel curvature normals. Then
\begin{enumerate}
    \item $E_k$ is an autoparallel distribution and $\dim E_k=1$
    (we denote by $X$ a unit vector field spanning $E_k$ and by
    $\gamma_x(t)$ the integral curve of $X$ starting at $x$).

    \item $E_k^\perp$ is integrable. We denote by $N$ a fixed local
    integral manifold of $E_k^\perp$ through $p$; such an $N$ is called a \textbf{generating hypersurface} of $M$.

    \item For any fixed small $t_0$, $x\mapsto\gamma_x(t_0)-x$, $x\in N$,
    is a parallel normal field of $N$, denoted by $\xi^{t_0}$. Moreover,
    the parallel manifold $N_{\xi^{t_0}}$ coincides (locally) with the
    integral manifold of $E_k^\perp$ through $\gamma_p(t_0)$.
   \item $M$ is the union of parallel manifolds. Namely, 
   \[
M=
\bigcup_{t\in(-\epsilon,\epsilon)}N_{\xi^t}
\qquad\text{(locally)}.
\]

    \item $\nu_0N=\mathbb RX_{|N}\oplus(\nu_0M)_{|N}$.

    \item The eigendistributions of $N$ with respect to $\nu_0N$ are
    $\bar E_i=E_{i|N}$, and $(\eta_i)_{|N}$ are the orthogonal
    projections onto $(\nu_0M)_{|N}$ of the curvature normals associated
    with $\bar E_i$, $i=1,\dots,g$, $i\neq k$. 
    \item The curvature normals of $N$ with respect to $(\nu_0M)_{|N}$ have constant length. 
  \item $\eta_{k|N}$ is a parallel non-zero normal isoparametric section of $N$.
  \item $N$ is contained in a sphere. 
 \item If $N'$ is an extrinsic factor of $N$, then
$TN'=E_{i|N}$, for some $i\in\{1,\dots,g\}\setminus\{k\}$. 
Moreover, this is the only eigendistribution associated to $\nu _0N'$.
\end{enumerate}
\end{theorem}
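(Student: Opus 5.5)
The proof rests on the standard structure equations for the curvature normals with respect to the flat part $\nu_0M$. On the regular open set the $E_i$ are smooth, and the shape operators satisfy $A_\xi|_{E_i}=\langle \eta_i,\xi\rangle\,\mathrm{Id}$ for $\xi\in\nu_0M$. The Codazzi equation then gives two identities. First, $\langle\nabla_X Y,Z\rangle(\eta_i-\eta_j)=\langle X,Y\rangle\nabla^\perp_Z\eta_i$ for $X,Y\in E_i$ and $Z\in E_j$ with $j\neq i$. Second, if $\dim E_i\ge 2$, then $\nabla^\perp_Z\eta_i=0$ for $Z\in E_i$. Recall also that $\eta_i$ is $\nabla^\perp$-parallel if and only if $E_i$ is an umbilical-type eigendistribution whose leaves are extrinsic spheres; these are the ``parallel'' curvature normals.

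Differentiate $|\eta_i|^2=\mathrm{const}$ to get $\langle\nabla^\perp_Z\eta_i,\eta_i\rangle=0$ for all $Z$. Now take $\eta_k$ of maximal length among the non-parallel curvature normals. Pair the Codazzi identity above with $\eta_k$:
\[\langle\nabla^\perp_Z\eta_j,\eta_k\rangle\,\text{-type terms give }\ \langle\eta_k-\eta_j,\nabla^\perp_Z\eta_k\rangle=\dots\]
Combining this with $|\eta_j|\le|\eta_k|$ yields the sign condition $\langle\eta_k,\eta_k-\eta_j\rangle>0$. This is a maximum-principle argument, and I would use it to force $\nabla^\perp_Z\eta_k=0$ for $Z\in E_k^\perp$. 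Since $\eta_k$ is non-parallel, it follows that $\dim E_k=1$, by the second Codazzi identity. Autoparallelity of $E_k$ is then automatic in dimension one once we know $\nabla_XX\in E_k$. That inclusion is the Codazzi identity with $\nabla^\perp_Z\eta_k=0$, because $\langle\nabla_XX,Z\rangle(\eta_k-\eta_j)=\nabla^\perp_Z\eta_k=0$. This proves (1).

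For (2), integrability of $E_k^\perp$ amounts to symmetry of $\langle\nabla_{Z}W,X\rangle$ for $Z,W\in E_k^\perp$. I would obtain it from the Codazzi identity applied to $\eta_j$ along $X$, namely $\langle\nabla_Z W,X\rangle(\eta_j-\eta_k)=\langle Z,W\rangle\nabla^\perp_X\eta_j$ for $Z,W\in E_j$, together with $E_i\perp E_j$ for the mixed terms. This step also shows that $N$ is umbilical in the $X$-direction on each $\bar E_i$, with eigenvalue $\langle\nabla^\perp_X\eta_i,\cdot\rangle/|\eta_i-\eta_k|^2$-type coefficients.

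For (3) and (4), since $E_k$ is a geodesic line field in $M$ and $\eta_k$ is constant along $E_k^\perp$, I would show that $\alpha(X,X)=\eta_k$-direction terms are constant along $N$ and that $\nabla_X X$ in $\mathbb R^{n+p}$ is $\langle$ stuff $\rangle$. The aim is that the integral curves $\gamma_x$ are congruent curves with parallel Frenet data over $N$. This makes $\gamma_x(t_0)-x$ a $\nabla^{\perp_N}$-parallel normal field of $N$, and the parallel manifold is then tangent to $E_k^\perp$; this is the standard ``parallel foliation'' argument.

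For (5)--(8), compute the normal space of $N$ as $\mathbb RX\oplus\nu M$. Its flat part contains $X$ and $\nu_0M$ because the shape operators $A^N_X$ and $A_\xi$ are simultaneously diagonal on the $\bar E_i$; equality holds by dimension and fullness. The curvature normals of $N$ are then $\eta_i+\lambda_iX$, and their projections give (6). Property (7) is inherited from $M$. For (8), $\eta_k|_N$ is $\nabla^\perp$-parallel along $N$ by the first step, and it is isoparametric because the $\langle\eta_i,\eta_k\rangle$ are constant along $N$. Then (9) follows from (8): a parallel normal field $\eta_k$ with $\langle\eta_i,\eta_k\rangle=|\eta_k|^2$-type identity on each $\bar E_i$ gives $A_{\eta_k}=c\,\mathrm{Id}$, so $N$ lies in a sphere of radius $1/c$.

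The main obstacle is (10). I would decompose $N$ by Moore's lemma into extrinsic factors $N'$. Each factor inherits a parallel isoparametric normal section and constant-length curvature normals. If a factor carried two or more eigendistributions, I would apply the isoparametric rank theorem cited in the introduction (\cite{DO}, \cite{O1}) to conclude that the factor has constant principal curvatures. Pulling this back along the parallel construction would make $M$ have constant principal curvatures, contradicting the hypothesis. Hence each factor is tangent to a single $\bar E_i$, which proves (10).
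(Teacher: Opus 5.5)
There is a genuine gap. The decisive one is in (8), and (9)--(10) depend on it. You assert that the $\langle\eta_i,\eta_k\rangle$ are constant along $N$, but nothing you have established gives this. Parallelism of $\eta_k|_N$ says nothing about the eigenvalues of $\bar A_{\eta_k}$. The Codazzi identities in directions tangent to $N$ only give $\langle\nabla^\perp_Z\eta_i,\eta_k\rangle=\langle\nabla_YY,Z\rangle\langle\eta_i-\eta_j,\eta_k\rangle$ for $Y\in E_i$ unit and $Z\in E_j$, $j\neq i$, which has an uncontrolled factor, and they give nothing for $Z\in E_i$ when $\dim E_i=1$.

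The missing idea is to use the constant length of $\eta_i$, $i\neq k$, in the transversal direction $X$. The paper does this with the tube formula. Setting $f=f_{q,i}(t)=1-\langle(\tilde\eta_i)_q,\gamma_q(t)-q\rangle$, one has $(\eta_i)_{\gamma_q(t)}=f^{-1}\bigl((\tilde\eta_i)_q+f'\gamma_q'(t)\bigr)$. So $|\eta_i|\equiv a_i$ becomes the ODE \eqref{eq:86}, which forces $f\equiv1$ or \eqref{eq:expsol}. Since $\gamma_q''=\eta_k$ (Lemma~\ref{lem:n_k}), this gives $\langle\eta_i,\eta_k\rangle=-f''/f\in\{0,a_i^2\}$ (Lemma~\ref{lem:acople}). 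Within your Codazzi framework you could reach the same dichotomy another way. Pair $\nabla^\perp_X\eta_i=\mu_i(\eta_i-\eta_k)$, where $\mu_i=\langle\nabla_YY,X\rangle$, with $\eta_i$ to get $\mu_i\,(a_i^2-\langle\eta_i,\eta_k\rangle)=0$. Then use the Riccati equation $X(\mu_i)=\mu_i^2+\langle\eta_i,\eta_k\rangle$ on the set where $\mu_i=0$.

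This dichotomy also shows that your (9) is false as stated. $\bar A_{\eta_k}$ equals $a_i^2\,\mathrm{Id}$ on coupled $\bar E_i$ and $0$ on uncoupled ones, and the $a_i$ are in general different (Section~\ref{sec:examples}). So $\bar A_{\eta_k}$ is not a multiple of the identity on $N$. Your identity $\langle\eta_i,\eta_k\rangle=|\eta_k|^2$ even contradicts, for $i>k$, the strict inequality $\langle\eta_k,\eta_k-\eta_i\rangle>0$ you use in (1). $N$ lies in a sphere only because each irreducible extrinsic factor $N'$ does, each with its own centre. Showing that $\bar A_{\eta_k}|_{TN'}$ is a non-zero multiple of the identity is the actual content of (9).

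That same argument is what (10) lacks. Your contradiction, that a factor with constant principal curvatures `pulls back' to give $M$ constant principal curvatures, is unsupported: $M$ is generated by all factors at once through $\xi^t$, and one factor being isoparametric says nothing about the others. The paper argues as follows.
If $\bar A_{\eta_k}|_{TN'}$ has two eigenvalues, Theorem~\ref{thm:iso-rank} makes $N'$ a submanifold with constant principal curvatures of rank $\geq2$. Lemma~\ref{lem:I_0} gives a coupled index $i$ attached to $N'$. Corollary~\ref{cor:cpc} and Remark~\ref{rem:A4} give two further curvature normals collinear with $\tilde\eta_i$. Lemma~\ref{lem:line}, proved from the trigonometric form of the $f_{q,i}$, forbids this. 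Hence $\bar A_{\eta_k}|_{TN'}=c\,\mathrm{Id}$, with $c\neq0$ by irreducibility of $M$.

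You also overlook the case where $\eta_k|_{N'}$ is umbilical but $N'$ still has several eigendistributions; there the rank theorem says nothing. In that case $c=\langle\eta_j,\eta_j\rangle$ for every $j$ attached to $N'$, and Remark~\ref{rem:equalL} together with Corollary~\ref{cor:exFactor} is needed.

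There are also smaller gaps:
\begin{itemize}
\item In (1), pairing $\nabla^\perp_Z\eta_k=\langle\nabla_XX,Z\rangle(\eta_k-\eta_j)$ with $\eta_k$ is a valid and more elementary substitute for the paper's $T^p$ argument, but only where $h_{j,k}\neq0$. For the longer parallel $\eta_j$, $j<k$, with $h_{j,k}\equiv0$ (the set $J$ in the proof of Lemma~\ref{lem:4.1}), pair with $\eta_j$ instead. This gives the coefficient $|\eta_k|^2-|\eta_j|^2$, which is non-zero since otherwise $\eta_j=\eta_k$.
\item In (2), $E_i\perp E_j$ does not control $\langle[Z,W],X\rangle$ for $Z\in E_i$, $W\in E_j$. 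Use instead $\nabla^\perp_{[Z,W]}\eta_k=0$ (flatness of $\nu_0M$) together with $\nabla^\perp_X\eta_k\neq0$.
\item In (5), ``dimension and fullness'' proves nothing. The inclusion $\nu_0N\subset\mathbb RX\oplus(\nu_0M)_{|N}$ needs $R^\perp_{X,\cdot}=0$ and a comparison of normal holonomies.
\item (3)--(4) need two facts: that $E_k^\perp$ is constant in $\mathbb R^{n+p}$ along each $\gamma_q$, and the Jacobi-field argument of Lemma~\ref{lem:4.2}. Stating the aim of proving congruence is not enough.
\end{itemize}
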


\smallskip

We now give a more precise description of the extrinsic
factors of the generating hypersurface $N$.

\begin{theorem}\label{thm:main1B} Let $N$ be  a generating hypersurface of a locally  irreducible Euclidean submanifold $M$, with non-constant principal curvatures  and $\nu _0M$-curvature normals of constant length. Then any of  the local extrinsic factors $N'$ of $N$, regarded as full submanifolds of its affine span $\mathbb A'$, is contained in a sphere and is of one of the following types: 
\begin{enumerate}[(A)] 
    \item $\dim N'\geq 2$, and $\nu _0N'= \mathbb R \vec{p}$, where $\vec{p}$ is the position vector field with respect to the centre of the sphere in $\mathbb A'$ that contains $N'$ (in particular, $\rank N'=1$ and the curvature normal is 
    $-\frac{1}{\vert \vec p\vert ^2 }\vec p$). 
     \item $N'$ is an \emph{admissible}  one dimensional submanifold.
\end{enumerate}    
\end{theorem}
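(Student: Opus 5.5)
We derive Theorem \ref{thm:main1B} from Theorem \ref{thm:main1}, especially items (7)--(10), together with the rank theorem (the non-homogeneous version of \cite{DO}) and the isoparametric rank theorem mentioned in the abstract.

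\textbf{Step 1: decomposition and spheres.} By item (9), $N$ lies in a sphere. Split $N$ locally as an extrinsic product $N=N_1\times\cdots\times N_r$ (Moore's lemma), with each $N_j$ full in an affine subspace $\mathbb A_j$ and the $\mathbb A_j$ mutually orthogonal. If $N$ lies in a sphere of centre $c$, write $c=(c_1,\dots,c_r)$. The position vector $\vec p=\sum_j \vec p_j$ relative to $c$ has constant length. A standard argument then shows each $|\vec p_j|$ is constant: differentiating along $TN_j$ gives $\langle X,\vec p_j\rangle=0$ for tangent $X$ of $N_j$. Hence each factor $N'=N_j$ lies in a sphere of $\mathbb A'$ centred at $c_j$.

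\textbf{Step 2: factors of dimension at least two.} By item (10), $TN'=E_{i|N}$ for a single $i$, and this is the only eigendistribution of $N'$ with respect to $\nu_0N'$. So $N'$ has exactly one curvature normal $\bar\eta$ relative to $\nu_0N'$, and $A_\xi=\langle\bar\eta,\xi\rangle\,\mathrm{Id}$ for every $\xi\in\nu_0N'$.

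Since $N'$ lies in a sphere, $\vec p$ is a parallel normal field with $A_{\vec p}=-\mathrm{Id}$, so $\vec p\in\nu_0N'$. Suppose $\dim\nu_0N'\ge2$. Take $\xi\in\nu_0N'$ orthogonal to $\vec p$. Then $A_\xi$ is a multiple of the identity, and the Codazzi equation with $\dim N'\ge2$ forces that multiple to be constant along $N'$. Thus $\xi-\lambda\vec p$ is a parallel normal field with zero shape operator, hence constant, and this contradicts fullness of $N'$ in $\mathbb A'$.

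This is the place where the isoparametric rank theorem is needed. The constant-length information from item (7) makes the flat normal data isoparametric along the factor. The theorem then says that a factor of rank at least two with an umbilical single curvature normal must split off or be non-full, which rules this case out. Therefore $\nu_0N'=\mathbb R\vec p$. From $A_{\vec p}=-\mathrm{Id}$ we read off $\bar\eta=-\vec p/|\vec p|^2$, which is type (A).

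\textbf{Step 3: one-dimensional factors.} A one-dimensional factor is a curve lying in a sphere of $\mathbb A'$. Here we verify whatever conditions the paper's notion of \emph{admissible} imposes (presumably defined right after the statement). The relevant constraints are that the projection of the curvature normal onto $(\nu_0M)_{|N}$ has constant length (item (7)), and that $\eta_{k|N}$ is parallel and isoparametric (item (8)). Both restrict the Frenet data of the curve, and we translate them into the admissibility condition.

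\textbf{Main obstacle.} The delicate point is Step 2: excluding extra flat normal directions in a factor of dimension at least two. The Codazzi argument alone only yields a constant multiple, and this must be combined with fullness and with the fact that the $\nu_0$-components coming from $M$ have constant length. We would invoke the isoparametric rank theorem here rather than try to argue by hand.
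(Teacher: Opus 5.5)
\textbf{Verdict: Steps 1 and 2 are essentially sound, but Step 3 is a genuine gap.}

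Step 3 contains no argument. Case (B) is exactly where something has to be verified, and you never identify the nonzero parallel section $Y$ that Definition~\ref{def:6} asks for.

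The missing idea is that $Y$ is the unit normal $X$ of $N$ in $M$.
\begin{itemize}
\item By Lemma~\ref{lem:4.1+}(1) and Theorem~\ref{thm:main1}(5), $X_{|N'}$ is a parallel section of the flat part of the normal bundle of $N'$, computed in the whole ambient space.
\item Its component orthogonal to $\mathbb A'$ is a parallel section of the trivial bundle of ambient-constant normals. Hence it is a constant vector $Y_0$.
\item So $X_{|N'}$ lies in the normal bundle of $N'$ inside the affine space $\mathbb A'+\mathbb R Y_0$, of dimension $\dim\mathbb A'+1$. This is why the definition adds one dimension; if $Y_0=0$, one adds a dummy direction.
\item If $TN'=E_{j|N}$ (item (10)), then since $\nu_0N=\mathbb RX\oplus(\nu_0M)_{|N}$, the curvature normal of $N'$ is $\tilde\eta_j=\mu_jX+\eta_j$ (Corollary~\ref{cor:11}).
\item Item (7) says precisely that the component $\eta_j$ of $\tilde\eta_j$ orthogonal to $X$ has constant length $a_j$. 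Since $|X|=1$, this length is unchanged when $Y_0$ is collapsed to a single direction.
\end{itemize}
Hence $N'$ is admissible with $Y=X$; this is Remark~\ref{rem:admissible} combined with Lemma~\ref{lem:Main}(4). Item (8) and Frenet data are not the relevant inputs here.

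\textbf{Smaller points on Steps 1 and 2.}
\begin{itemize}
\item In Step 2, your Codazzi argument already closes the case. It is the paper's own route (Remark~\ref{rem:equalL2}, giving Lemma~\ref{lem:Main}(5)), so your ``main obstacle'' worry is unfounded.
\item There is a sign slip: since $A_{\vec p}=-\mathrm{Id}$, the field with zero shape operator is $\xi+\lambda\vec p$, not $\xi-\lambda\vec p$.
\item The paragraph invoking the isoparametric rank theorem should be removed. Theorem~\ref{thm:iso-rank} does not assert what you attribute to it. In the paper it is used upstream, in Lemma~\ref{lem:Main}(1),(3), to produce item (10), which you already take as given.
\item In Step 1 you reverse the paper's logic. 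The paper shows directly that each factor lies in a sphere, because the shape operator of $\eta_{k|N'}$ is a nonzero multiple of the identity (Lemma~\ref{lem:Main}(1),(2)), and then deduces item (9). Granting item (9), your splitting argument is correct.
\end{itemize}
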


This theorem and Theorem~\ref{thm:main1}(5) imply that, if $M$ has in
addition a flat normal bundle, then any extrinsic factor of $N$ of
dimension at least $2$ is a sphere. 

For the concept of admissible one-dimensional submanifolds see Definition \ref{def:6}. Any one-dimensional submanifold, contained in a sphere,  with curvature normal of constant length is admissible. 

In Section~\ref{sec:examples}, we prove that if each factor of a
submanifold $N$ is either as in (A) of the previous theorem or is a
one-dimensional submanifold of a sphere with curvature normal of
constant length, then $N$ is the generating hypersurface of a locally
irreducible submanifold $M$ with $\nu_0M$-curvature normals of
constant length. It remains open whether the same conclusion holds
when a one-dimensional factor is merely admissible, without its
curvature normal having constant length.

\medskip

We now briefly indicate some of the ideas in the proof  of the above theorems. The set $\{1,\dots,g\}\setminus\{k\}$ is
the union of two disjoint subsets, $I_0$ and $I_1$, where $I_0$
consists of the uncoupled indices, that is, those for which
$\eta_i\perp\eta_k$, and $I_1$ consists of the coupled indices, that
is, those for which
\(
\langle\eta_i,\eta_k\rangle=\langle\eta_i,\eta_i\rangle
\).
This, in particular, implies that $\eta_{k|N}$ is a parallel normal
isoparametric section of $N$. Using the isoparametric rank theorem,
together with the property that no three curvature normals of $M$ can
lie on a line, we can exclude the factors of $N$ where $\eta_k$ is
non-umbilical, so that the remaining factors have constant principal
curvatures and higher rank (see Lemma~\ref{lem:line} and
Corollary~\ref{cor:exFactor}).

We always have $I_1\neq\emptyset $. Moreover, if $I_0\neq\emptyset $,
then $|I_1|\geq2$ (see Section~\ref{sec:long}, Lemma~\ref{lem:I_0}, and Remark~\ref{rem:820}). If $i,j\in I_1$, then $\langle\eta_i,\eta_j\rangle$ is not constant by Lemma~\ref{lem:non<,>}. 

From Theorem~\ref{thm:main1} and the preceding
facts, we obtain the following result.

\begin{theorem}\label{thm:main2}
Let $M^n$, $n\geq2$, be a locally irreducible and full submanifold of the
Euclidean space $\mathbb R^{n+p}$ of rank at least $2$. Let
$\eta_1,\dots,\eta_g$ be its curvature normals with respect to the flat
part $\nu_0M$ of the normal bundle. Assume that $M$ does not have
constant principal curvatures around any of its points and that the
inner products $\langle\eta_i,\eta_j\rangle$ of any two curvature
normals are constant. Then $g=2$, and one of the eigendistributions has
dimension $1$ and is autoparallel.
\end{theorem}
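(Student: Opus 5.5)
Constant inner products $\langle\eta_i,\eta_j\rangle$ include constant lengths $|\eta_i|^2$, so Theorem~\ref{thm:main1} applies. I would work near a regular point $p$. Because $M$ does not have constant principal curvatures near any point, some curvature normal is non-parallel. Let $\eta_k$ be one of maximal length among the non-parallel ones. Theorem~\ref{thm:main1}(1) then says that $E_k$ is one-dimensional and autoparallel. So it remains to prove that $g=2$, i.e.\ that $\{1,\dots,g\}\setminus\{k\}$ has exactly one element.

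I would use the splitting of $\{1,\dots,g\}\setminus\{k\}$ into the uncoupled indices $I_0$ (those with $\eta_i\perp\eta_k$) and the coupled indices $I_1$ (those with $\langle\eta_i,\eta_k\rangle=|\eta_i|^2$), as described after Theorem~\ref{thm:main1B}. The argument then has three steps.
\begin{enumerate}
\item $I_1\neq\emptyset$. This is stated in the paper (Section~\ref{sec:long}).
\item $|I_1|\le 1$. By Lemma~\ref{lem:non<,>}, for any two distinct $i,j\in I_1$ the inner product $\langle\eta_i,\eta_j\rangle$ is not constant. This contradicts the hypothesis, so $I_1$ has exactly one element.
\item $I_0=\emptyset$. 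By Lemma~\ref{lem:I_0} and Remark~\ref{rem:820}, if $I_0\neq\emptyset$ then $|I_1|\ge 2$, which contradicts the previous step.
\end{enumerate}
Together these give $|I_0\cup I_1|=1$, so $g=2$.

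One point needs care: the dichotomy between $I_0$ and $I_1$ must be available under our hypotheses, and the cited lemmas must apply at regular points. These are regular points in the sense of Theorem~\ref{thm:main1}, where the multiplicities of the $\eta_i$ are locally constant and the set of non-parallel normals is locally constant. Such points are open and dense, and $g$ is constant on each component of the regular set, so the conclusion $g=2$ obtained near every regular point holds there. If the statement is meant globally, I would add a short density argument: $g$ is lower semicontinuous, the curvature normals have constant inner products, and any two distinct ones stay distinct (their difference has constant nonzero length), so no coalescence can occur and $g$ is in fact constant on all of $M$.

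The main obstacle is Lemma~\ref{lem:non<,>}, which I take as given. If I had to prove it, I would differentiate $\langle\eta_i,\eta_j\rangle$ along $X$ using the Codazzi-type equations for curvature normals along the autoparallel line field. Here $\eta_k$ is non-parallel and the coupling conditions hold, so $X\eta_i$ is proportional to $\eta_k-\eta_i$ with a factor that cannot vanish identically. Substituting gives $X\langle\eta_i,\eta_j\rangle\neq 0$, unless three curvature normals lie on a line, which Lemma~\ref{lem:line} forbids.
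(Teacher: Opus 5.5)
Your proposal is correct and matches the paper's proof. There, Theorem~\ref{thm:main2} is deduced from Theorem~\ref{thm:main1}(1) together with Corollary~\ref{cor:algCons}, and the proof of that corollary consists of exactly your three steps: $I_1\neq\emptyset$, then $|I_1|\le 1$ by Lemma~\ref{lem:non<,>}, then $I_0=\emptyset$; the precise reference for the implication $I_0\neq\emptyset\Rightarrow|I_1|\geq 2$ is Lemma~\ref{lem:9f7}. Your closing sketch of Lemma~\ref{lem:non<,>} is not how the paper argues: it shows that \eqref{eq:<,>6} is incompatible with the explicit trigonometric form of the $f_{q,i}$ and does not use Lemma~\ref{lem:line}. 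Since you take that lemma as given, this does not affect your argument.
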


\begin{corollary}\label{cor:main3}
Let $M^n $, $n\geq 2$,  be an irreducible and full submanifold of Euclidean space with
flat normal bundle. Assume that the second fundamental form is algebraically constant and that $M$ is not an isoparametric submanifold. 
Then $M$ has exactly two eigendistributions, one of them of dimension $1$ and autoparallel, and  has positive constant sectional curvature. 
\end{corollary}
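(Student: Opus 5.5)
The plan is to derive the corollary from Theorem~\ref{thm:main2} and then compute the curvature from the Gauss equation. Since the normal bundle is flat, $\nu_0M=\nu M$. The curvature normals $\eta_1,\dots,\eta_g$ are then defined with respect to the whole normal space, and the shape operators are $A_\xi=\sum_i\langle\eta_i,\xi\rangle\,\pi_i$, where $\pi_i$ is the projection onto $E_i$.

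\emph{Reduction to Theorem~\ref{thm:main2}.} The second fundamental form is algebraically constant, so at each point $x$ there is a linear isometry identifying $(T_xM,\nu_xM,\alpha_x)$ with a fixed model. Hence the finite configuration $\{\eta_i(x)\}$ is congruent to a fixed configuration, and the multiplicities $\dim E_i$ are constant. So $g$ is constant. After a consistent labelling, which is possible locally and continuously on the open dense set of regular points and then everywhere by constancy, the Gram matrix $\langle\eta_i,\eta_j\rangle$ is constant. The rank of $M$ equals the codimension $p$. We need $p\ge 2$: if $p=1$, the principal curvatures are the components of the $\eta_i$ and are constant, so $M$ is isoparametric. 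Since $M$ is not isoparametric, $p\geq 2$, and the principal curvatures are not locally constant on any open set. If they were, the real-analytic or continuity argument of the rank theorem setting would make the submanifold isoparametric; more simply, constant principal curvatures with flat normal bundle is exactly isoparametric, and if this happened on an open set, then by algebraic constancy together with the Codazzi equations it would propagate. I would check this locality point carefully. Theorem~\ref{thm:main2} now gives $g=2$, with, say, $E_1$ one-dimensional and autoparallel.

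\emph{Sectional curvature.} With flat normal bundle, the Gauss equation gives $K(X,Y)=\langle\eta_i,\eta_j\rangle$ for unit vectors $X\in E_i$, $Y\in E_j$ with $i\neq j$, and $K(X,Y)=|\eta_i|^2$ when $X,Y\in E_i$. A general plane mixes these, so constant sectional curvature requires $\langle\eta_1,\eta_2\rangle=|\eta_2|^2$ (we only need $E_2$ in-plane values, since $\dim E_1=1$). It also requires the curvature operator to be a multiple of the identity on all of $\Lambda^2$, which, because $R$ is diagonal in the adapted basis, holds exactly when all these values coincide. So the key step is to prove that the indices are coupled, that is, $\langle\eta_1,\eta_2\rangle=\langle\eta_2,\eta_2\rangle$, and that this common value is positive.

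I expect this coupling step to be the main obstacle. I would use the introduction's dichotomy: with $k=1$ the distinguished index, the remaining index $2$ lies in $I_0\cup I_1$, and $I_1\neq\emptyset$ forces $2\in I_1$. This yields $\langle\eta_2,\eta_1\rangle=|\eta_2|^2$. If $\eta_2=0$ were possible, $M$ would split off a Euclidean factor, contradicting irreducibility, so $|\eta_2|^2>0$. If $\dim E_2=1$ then $n=2$ and there is only the mixed curvature, which is positive. Otherwise every sectional curvature equals $|\eta_2|^2>0$, a constant. Fullness and irreducibility are used only to place us within the hypotheses of Theorem~\ref{thm:main2}.
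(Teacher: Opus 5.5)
Your proposal is correct and follows the paper's route. Algebraic constancy makes the inner products $\langle\eta_i,\eta_j\rangle$ constant, and Theorem~\ref{thm:main2} then gives $g=2$ with $E_k$ one-dimensional and autoparallel. Since $I_1\neq\emptyset$, the remaining index is coupled, so Lemma~\ref{lem:acople}(2) gives $\langle\eta_k,\eta_i\rangle=|\eta_i|^2$. The Gauss equation then gives constant curvature $|\eta_i|^2$; your diagonal curvature-operator argument is equivalent to the explicit computation in Corollary~\ref{cor:algCons2}.

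A few minor remarks. Positivity already follows from Lemma~\ref{lem:acople}(2), since $a_i^2\neq0$ for a coupled index. You should therefore drop your unjustified claim that $\eta_2=0$ would split off a Euclidean factor. On locality, the paper does not prove a propagation statement; it reads ``not isoparametric'' in the local sense required by Theorem~\ref{thm:main2}. Your explicit check that the codimension is at least $2$ is a detail the paper leaves implicit.
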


\medskip

The paper is organized as follows. In Section~\ref{sec:prel}, we recall the basic
facts and notation concerning curvature normals and eigendistributions.
Section~\ref{sec:structure} is devoted to the local structure of submanifolds with curvature
normals of constant length. In Section~\ref{rec}, we describe how $M$ is
recovered from an integral manifold $N$ of the distinguished
eigendistribution, while Section~\ref{sec:long} establishes further restrictions
involving the longest non-parallel curvature normal and the extrinsic
factors of $N$. In Section~\ref{sec:inner}, we study the inner products of the
curvature normals and derive consequences from their constancy. Section~\ref{sec:proof} contains the proofs of the main results, and Section~\ref{sec:examples} presents a general construction of examples.
Finally, the Appendix~\ref{App:A} collects some basic facts about submanifolds with
constant principal curvatures and higher rank. These facts do not seem
to have been treated in the literature in the setting considered here,
in particular because there is no Weyl group associated with the flat
part of the normal bundle.

\section{Preliminaries and basic facts}\label{sec:prel}
Let $M^n \subset \mathbb{R}^{n+p}$ ($n \geq 2$) be a full local submanifold 
that is locally irreducible around every point. We may assume that M is contractible; hence every vector bundle over M is trivial. As usual, $A$ denotes the shape operator and $\alpha$ the second fundamental form.
Let $\nu_0M$ denote the flat part of the normal bundle of $M$ that we assume has constant dimension, since we work locally. Let 
$\tilde{\nu}_0M$ be a non-zero parallel subbundle of $\nu_0M$.\footnote{Although we will mainly be concerned with the case
$\tilde\nu_0M=\nu_0M$, we will need this more general setting for the
hypersurfaces $N$ of $M$ introduced later.} 
By the Ricci identity, all shape operators of sections of $\tilde{\nu}_0M$ 
commute, and so there exists, at each point $p \in M$, an orthogonal decomposition
\[
T_pM = E_1(p) \oplus \dots \oplus E_g(p)
\]
with respect to which all shape operators associated with vectors in 
$(\tilde{\nu}_0M)_p$ are simultaneously diagonalized.

Associated with this decomposition, there are curvature normals
\[
\eta_i(p) \in (\tilde{\nu}_0M)_p, \qquad i = 1, \dots, g(p),
\]
such that for every $\xi \in (\tilde{\nu}_0M)_p$
\begin{equation}\label{eq:00}
A_\xi\big|_{E_i(p)}
=
\lambda_i(\xi)\operatorname{Id}
=
\langle \xi,\eta_i(p)\rangle\operatorname{Id}.
\end{equation}

On an open dense subset  of $M$, $g(p)$ and the dimensions of
$E_1(p), \dots, E_{g(p)}(p)$ are locally constant. Since we are working 
locally, we may assume that $g := g(p)$ and $\dim E_1, \dots, \dim E_g$ 
are constant on $M$. Thus, $E_1, \dots, E_g$ define smooth distributions, 
called the $\tilde{\nu}_0 M$-eigendistributions. Moreover, $\eta_1, \dots, \eta_g$ 
are smooth sections of $\tilde{\nu}_0 M$, called the curvature normals 
with respect to $\tilde{\nu}_0 M$.

Let $\xi$ be an arbitrary section of $\tilde{\nu}_0 M$. Observe that
\[
R^\perp_{X,Y} \xi = 0.
\]
Hence, by the Ricci identity, the shape operator $A_\xi$ commutes with 
all shape operators associated with arbitrary sections of $\nu M$. 
Therefore, each eigendistribution is invariant under every shape operator of $M$. Equivalently, 
\begin{equation}\label{eq:-1}
\alpha (E_i,E_j)= 0  \text{ if } i\neq j.
\end{equation}
\medskip

\textbf{General Assumptions.} From now on, we assume that \(g\geq 2\) and that the length of each curvature normal \(a_i: =\|\eta_i\|\), with respect to \(\tilde{\nu}_0M\), is constant (for short, \(M\) has \(\tilde{\nu}_0M\)-curvature normals of constant length).

\medskip

In order to make the formulation of our main theorems precise, we
introduce the following definition.

\medskip

\begin{definition}\label{def:AA}
A Euclidean (local) submanifold is said to have
\emph{$\tilde\nu_0$-curvature normals of constant length} (or curvature
normals  with respect to $\tilde\nu_0$ of constant length) if:
\begin{enumerate}
    \item The flat part $\nu_0M$ of the normal bundle $\nu M$ has constant
    dimension and hence is smooth.

    \item $\tilde\nu_0$ is a nonzero parallel (and hence flat) subbundle
    of $\nu_0M$.

    \item The eigendistributions $E_1,\dots,E_g$ associated with
    $\tilde\nu_0M$ have constant dimension and hence are smooth, where
    $g\geq1$ is constant.

    \item Each of the associated curvature normal fields $\eta_i$ has
    constant length $\|\eta_i\|$, $i=1,\dots,g$.
\end{enumerate}
\end{definition}
\begin{definition}\label{def:AA2}
A Euclidean submanifold is said to be
\begin{itemize}

    \item[--]  \emph{locally irreducible} if it is irreducible in a neighborhood
    of each of its points; 

    \item[--] \emph{locally full} if it is full in a neighborhood of each of
    its points.
\end{itemize}
\end{definition}

\bigskip

The fact that $M$ has $\tilde \nu_0 M$-curvature normals of constant length 
 is equivalent to the fact that the tensor
\[
B_p^0 = \sum_{i=1}^r A^2_{\xi_i(p)}
\]
has constant eigenvalues, where $\xi_1(p), \dots, \xi_r(p)$ is an orthonormal 
basis of $(\tilde{\nu}_0 M)_p$.
\medskip

Such a tensor is called the third fundamental form of $M$ with respect to 
$\tilde{\nu}_0 M$.

We assume, without loss of generality, that 
\begin{equation}\label{eq:not09}
\|\eta_i\| \geq \|\eta_j\| \quad \text{if } i < j, \quad i, j \in \{1, \dots, g\}.
\end{equation}



Let $\hat I \subset \{1, \dots, g\}$ be defined by
\begin{equation}\label{eq:393}
    \hat I = \bigl\{\, i \in \{1, \dots, g\} \colon \nabla^\perp \eta_i = 0 \,\bigr\}.
\end{equation}

\begin{remark}\label{rem:123}
If $\hat I = \{1, \dots, g\}$, then for any parallel section $\xi$ of 
$\tilde{\nu}_0 M$, the shape operator $A_\xi$ has constant eigenvalues 
$\langle \eta_1, \xi \rangle, \dots, \langle \eta_g, \xi \rangle$. 
Thus, $\xi$ is a non-trivial isoparametric parallel section. It follows 
from Theorem~4.5.10 of~\cite{BCO} that $M$ is contained in a sphere, 
say $S^{N-1}$.
If $\xi$ is generic and $g \geq 2$, then the eigenvalues 
$\langle \eta_1, \xi \rangle, \dots, \langle \eta_g, \xi \rangle$ are 
pairwise distinct. Hence, $\xi$ is a parallel isoparametric normal section which is  not  umbilical. Then, by Theorem~\ref{thm:iso-rank}, 
 $M$  is either an 
isoparametric hypersurface of the sphere or an orbit of an $s$-representation. 
\end{remark}
\begin{remark}\label{rem:232}
Assume that $g = 1$. Then $E_1 = TM$ is an autoparallel distribution 
and thus, by Lemma~\ref{lem:2.1}~(2), $\nabla^\perp \eta_1 = 0$ 
($n = \dim M \geq 2$). If $\eta_1 = 0$, then $M$ is not full, since 
any non-trivial parallel section of $\tilde{\nu}_0 M$ has null shape 
operator. If $\eta_1 \neq 0$, then the map $q \mapsto q + \|\eta_1\|^{-2}\eta_1$ 
is constant and hence $M$ is contained in a sphere of radius $\|\eta_1\|^{-1}$.
\end{remark}

\medskip

By Remark~\ref{rem:123}, we may assume that $\hat I$ is a proper, possibly 
empty, subset of $\{1, \dots, g\}$. Since we are working locally, we may 
assume, for every $i \in \{1, \dots, g\} \setminus \hat I$, that the curvature 
normal $\eta_i$ is nowhere parallel on $M$.
Let
\begin{equation}\label{eq:A}
k := \min\bigl(\{1, \dots, g\} \setminus \hat I\bigr).
\end{equation}

In~\cite{DO}, the whole bundle $\nu_0 M$ was considered, rather than a 
parallel subbundle $\tilde{\nu}_0 M$ of it. However, most of the local results 
obtained there remain valid for smaller parallel subbundles. 

Let us recall the following 
result, which does not require the curvature normals to have constant length.
\begin{lemma}[see~\cite{DO}, Lemma~2.1]\label{lem:2.1}
In the general notation of this section:
\begin{enumerate}
    \item $E_i$ is autoparallel if and only if $\nabla^\perp_Z \eta_i = 0$ 
    for all $Z \in E_i^\perp$.
    \item If $\dim E_i \geq 2$, then $E_i$ is autoparallel if and only if 
    $\eta_i$ is a parallel normal section.
\end{enumerate}
\end{lemma}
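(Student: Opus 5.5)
We prove the two items of Lemma~\ref{lem:2.1} from the Codazzi equation restricted to the flat subbundle $\tilde\nu_0M$. Let $\xi$ be a parallel section of $\tilde\nu_0M$; these exist locally and span each fibre, since $\tilde\nu_0M$ is flat. By \eqref{eq:00}, $A_\xi|_{E_i}=\langle\xi,\eta_i\rangle\operatorname{Id}$. Codazzi for a parallel normal field reads
\[
(\nabla_XA_\xi)Y=(\nabla_YA_\xi)X .
\]
First take $X,Y\in E_i$ orthonormal (when $\dim E_i\ge2$) and $Z\in E_j$. For $X\in E_i$ we get
\[
(\nabla_YA_\xi)X=Y(\lambda_i)X+(\lambda_i-A_\xi)\nabla_YX,
\]
with $\lambda_i=\langle\xi,\eta_i\rangle$, so that $Y(\lambda_i)=\langle\xi,\nabla^\perp_Y\eta_i\rangle$. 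Taking the $E_i$-component of the Codazzi identity gives $Y(\lambda_i)X=X(\lambda_i)Y$. Hence $\nabla^\perp_Y\eta_i$ is orthogonal to every $\xi$, for all $Y\in E_i$, when $\dim E_i\ge2$. This is the standard fact that the curvature normal is parallel along its own eigendistribution when that distribution has dimension at least two.

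Next take $X\in E_i$ and $Z\in E_j$ with $j\ne i$, and pair the Codazzi identity with $Y\in E_i$. We obtain
\[
\langle(\nabla_ZA_\xi)X,Y\rangle=Z(\lambda_i)\langle X,Y\rangle,
\qquad
\langle(\nabla_XA_\xi)Z,Y\rangle=(\lambda_j-\lambda_i)\langle\nabla_XZ,Y\rangle=(\lambda_i-\lambda_j)\langle\nabla_XY,Z\rangle .
\]
Equating these yields
\[
\langle\xi,\nabla^\perp_Z\eta_i\rangle\langle X,Y\rangle=\langle\xi,\eta_i-\eta_j\rangle\langle\nabla_XY,Z\rangle .
\]
This is the key identity. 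If $E_i$ is autoparallel, the right-hand side vanishes; taking $X=Y$ unit gives $\nabla^\perp_Z\eta_i\perp\xi$ for every $\xi$. Since both sides are $\tilde\nu_0M$-valued, $\nabla^\perp_Z\eta_i=0$ for all $Z\in E_i^\perp$.

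Conversely, suppose $\nabla^\perp_Z\eta_i=0$ for all $Z\in E_i^\perp$. Then $\langle\xi,\eta_i-\eta_j\rangle\langle\nabla_XY,Z\rangle=0$ for every $\xi$. Since $\eta_i\ne\eta_j$, we may choose $\xi$ with $\langle\xi,\eta_i-\eta_j\rangle\ne0$, and so $\langle\nabla_XY,Z\rangle=0$ for every $Z\in E_j$ and every $j\ne i$. Thus $E_i$ is autoparallel. This proves (1).

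For (2), assume $\dim E_i\ge2$. The first step shows that $\nabla^\perp_Y\eta_i=0$ for $Y\in E_i$ holds automatically. Therefore $\eta_i$ is parallel if and only if $\nabla^\perp_Z\eta_i=0$ for all $Z\in E_i^\perp$, which by (1) holds if and only if $E_i$ is autoparallel.

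The main point needing care is that $\nabla^\perp\eta_i$ indeed lies in $\tilde\nu_0M$; this holds because $\tilde\nu_0M$ is parallel. The same fact is what lets us test against parallel sections $\xi$ of $\tilde\nu_0M$ and conclude that the component vanishes. Smoothness of the $E_i$ is guaranteed by the constant-dimension assumption.
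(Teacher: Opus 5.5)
Your proof is correct. The paper gives no proof of this lemma; it cites Di Scala--Olmos. Your argument is the standard Codazzi-equation proof that the reference relies on: the $E_i$-component gives $\nabla^\perp_Y\eta_i=0$ for $Y\in E_i$ when $\dim E_i\ge 2$, and pairing with $E_i$ yields the key identity $\langle\xi,\nabla^\perp_Z\eta_i\rangle\langle X,Y\rangle=\langle\xi,\eta_i-\eta_j\rangle\langle\nabla_XY,Z\rangle$. You also correctly use only that $\tilde\nu_0M$ is parallel and flat and that $\eta_i\neq\eta_j$ pointwise, without the constant-length hypothesis, as the paper notes.
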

\medskip

Let $h_{i,j}: M\to \mathbb R$, $i, j\in \{1, \dots , g\}$, $i\neq j$, be defined by 
\[ h_{i,j} = \langle \eta _j, \eta _j\rangle -\langle \eta _i , \eta _j\rangle = \langle \eta _j-\eta _i, \eta _j\rangle\]
(see Section 4 of \cite{DO}). Then, from the Cauchy-Schwarz inequality,  
\begin{equation}\label{eq:82}
    h_{i,j}> 0 \text{ if } j<i.
    \end{equation} 

\begin{definition} \label{def:1}
A point $p \in M$ is called \emph{generic} if there exists an open 
neighborhood $\bar \Omega$ of $p$ in $M$ such that for all $i \in \{1, \dots, g\}$ 
either $h_{i,k}$ is identically zero on $\bar \Omega$ or 
$h_{i,k}$ is nowhere vanishing in $\bar\Omega_p$.
\end{definition}
\medskip

\noindent It is standard to show that the set $\bar \Omega$ of generic points is open and dense in $M$. 
\medskip

Observe, by (\ref{eq:82}), that $h_{i,k}$ is nowhere vanishing on $M$ if $i>k$. Moreover, by \eqref{eq:A}, if $i<k$, then $\eta_i$ is parallel. 

\begin{lemma}[see~\cite{DO}, Lemma~4.1; \cite{O1}, p. 611]\label{lem:4.1}
Under the notation and assumptions of this section:
\begin{enumerate}
    \item $E_k$ is autoparallel and $\dim E_k=1$.
    \item $E_k^\perp$ is an integrable distribution of $M$.
\end{enumerate}
\end{lemma}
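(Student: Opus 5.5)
The plan is to work entirely with the Codazzi equation for parallel sections of $\tilde\nu_0M$. Locally $\tilde\nu_0M$ is flat, so it has a parallel frame. Fix unit fields $X_i\in E_i$ and a parallel section $\xi$ of $\tilde\nu_0M$, so that $A_\xi X_i=\langle\xi,\eta_i\rangle X_i$. Expanding $(\nabla_{X_j}A)_\xi X_i=(\nabla_{X_i}A)_\xi X_j$ for $i\neq j$, and then taking the $X_j$-component and varying $\xi$, should give the standard identity
\[
\nabla^\perp_{X_j}\eta_i=\langle \nabla_{X_i}X_i,X_j\rangle\,(\eta_i-\eta_j),\qquad i\neq j .
\]
Taking the $X_l$-component, for $l$ distinct from $i$ and $j$, should give the three-index relation
\[
\langle\nabla_{X_i}X_j,X_l\rangle(\eta_j-\eta_l)=\langle\nabla_{X_j}X_i,X_l\rangle(\eta_i-\eta_l).
\]
Finally, when $\dim E_i\ge2$, applying Codazzi to two orthonormal vectors of $E_i$ should give $\nabla^\perp_{X_i}\eta_i=0$.

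\textbf{Step 1: $\nabla^\perp_Z\eta_k=0$ for all $Z\in E_k^\perp$.} By Lemma~\ref{lem:2.1}(1) this is equivalent to $E_k$ being autoparallel. Since $\|\eta_k\|$ is constant, $\langle\nabla^\perp\eta_k,\eta_k\rangle=0$. Pairing the first identity (with $i=k$) with $\eta_k$ gives $\langle\nabla_{X_k}X_k,X_j\rangle\,h_{j,k}=0$.

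If $j>k$, then $h_{j,k}>0$ by \eqref{eq:82}. Hence $\langle\nabla_{X_k}X_k,X_j\rangle=0$, and so $\nabla^\perp_{X_j}\eta_k=0$.

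If $j<k$ and $h_{j,k}\neq0$, the same argument applies.

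If $j<k$ and $h_{j,k}=0$, I use genericity (Definition~\ref{def:1}): $h_{j,k}\equiv0$ near the point. The section $\eta_j$ is parallel because $j\in\hat I$, and $\|\eta_k\|$ is constant, so differentiating $h_{j,k}=\|\eta_k\|^2-\langle\eta_j,\eta_k\rangle$ gives $\langle\nabla^\perp\eta_k,\eta_j\rangle=0$. Write $\nabla^\perp_{X_j}\eta_k=c(\eta_k-\eta_j)$ and pair with $\eta_j$. This gives $-c\,h_{k,j}=0$. Moreover $h_{k,j}=\|\eta_j-\eta_k\|^2-h_{j,k}=\|\eta_j-\eta_k\|^2>0$, so $c=0$.

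The statement then holds on the dense open set of generic points, and hence everywhere by continuity.

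\textbf{Step 2: $\dim E_k=1$.} If $\dim E_k\ge2$, then $E_k$ is autoparallel by Step 1, so Lemma~\ref{lem:2.1}(2) forces $\eta_k$ to be parallel. This contradicts $k\notin\hat I$.

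\textbf{Step 3: integrability of $E_k^\perp$.} Let $X$ span $E_k$. Since $E_k$ is autoparallel we have $\nabla_XX=0$, and I must show $\langle\nabla_ZW-\nabla_WZ,X\rangle=0$ for $Z,W\perp X$.

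For $Z,W$ in the same $E_i$, Codazzi gives $\langle\nabla_ZW,X\rangle(\eta_i-\eta_k)=\langle Z,W\rangle\nabla^\perp_X\eta_i$, which is symmetric in $Z$ and $W$.

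For $Z=X_i$, $W=X_j$ with $i\ne j$ (both different from $k$), I would use the three-index relation with $l=k$. If $\eta_i-\eta_k$ and $\eta_j-\eta_k$ are linearly independent, both coefficients vanish and we are done. The delicate case is when $\eta_i,\eta_j,\eta_k$ are collinear. There I would pair the relation with $\eta_k$ and exploit $\nabla^\perp_{X_i}\eta_k=\nabla^\perp_{X_j}\eta_k=0$ from Step 1. In particular, the cyclic Codazzi relations involving $\nabla_X X_i$ and $\nabla_X X_j$ should give a second, independent linear relation between $\langle\nabla_{X_i}X_j,X\rangle$ and $\langle\nabla_{X_j}X_i,X\rangle$. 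I would also use the length constraints $h_{\cdot,\cdot}$, which restrict the collinear configurations, in the spirit of \cite{O1}, p.~611.

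I expect this collinear subcase to be the main obstacle. Steps 1 and 2 are essentially forced by the Codazzi identity together with the constant-length hypothesis.
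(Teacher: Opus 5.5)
\textbf{Steps 1 and 2.} These are correct. Your route to autoparallelity differs from the paper's but is valid, and it is more explicit. You pair the Codazzi identity $\nabla^\perp_{X_j}\eta_k=\langle\nabla_{X_k}X_k,X_j\rangle(\eta_k-\eta_j)$ with $\eta_k$. In the degenerate generic case $h_{j,k}\equiv0$ you also pair it with the parallel $\eta_j$. The paper instead picks a parallel section $\xi+\zeta$ whose $\lambda_k$ attains a maximum at $p$ and separates all eigenvalues. It then uses that the kernel of the Codazzi tensor $A_{\xi+\zeta}-\lambda_k(\xi+\zeta)\mathrm{Id}$ is autoparallel at $p$. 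Both arguments rest on the same input: $\|\eta_k\|$ is constant, and $\langle\eta_i,\eta_k\rangle$ is constant for the parallel $\eta_i$ with $h_{i,k}\equiv0$. Your Step 2 is exactly the paper's.

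\textbf{Step 3 has a genuine gap.} The collinear case is left open, and the strategy you sketch cannot close it. Set $u=\langle\nabla_{X_i}X_j,X\rangle$, $v=\langle\nabla_{X_j}X_i,X\rangle$ and $w=\langle\nabla_X X_i,X_j\rangle$. For the triple $i,j,k$, all three Codazzi relations reduce to
\[
u(\eta_j-\eta_k)=v(\eta_i-\eta_k)=w(\eta_i-\eta_j).
\]
This includes the ones involving $\nabla_XX_i$ and $\nabla_XX_j$ from which you hoped to extract a second relation. Now suppose $\eta_i-\eta_k=s(\eta_j-\eta_k)$ with $s\neq0,1$. Then the relations read $u=sv=(s-1)w$, a one-parameter family that does not force $u=v$. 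Pairing with $\eta_k$ or using the $h$'s does not help either, since those constrain other connection coefficients.

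What is missing is the hypothesis that $\eta_k$ is nowhere parallel, used through a second-order (flatness) argument. This is the paper's proof of (2). By Step 1 and $\dim E_k=1$, $E_k^\perp$ is exactly the kernel of the $\tilde\nu_0M$-valued form $\nabla^\perp\eta_k$. For $Z,W\in\Gamma(E_k^\perp)$, flatness of $\tilde\nu_0M$ gives
\[
0=R^\perp_{Z,W}\eta_k=\nabla^\perp_Z\nabla^\perp_W\eta_k-\nabla^\perp_W\nabla^\perp_Z\eta_k-\nabla^\perp_{[Z,W]}\eta_k=-\nabla^\perp_{[Z,W]}\eta_k ,
\]
so $[Z,W]\in E_k^\perp$. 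Equivalently, for a parallel $\xi$ the function $\lambda_k=\langle\xi,\eta_k\rangle$ satisfies $Z\lambda_k=W\lambda_k=0$, hence $[Z,W]\lambda_k=0$. On the other hand $X\lambda_k\neq0$ for a suitable $\xi$. This handles all pairs $i,j$ at once, with no case distinction.
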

\begin{proof}
Let $p\in M$ be a generic point and let $\bar \Omega _p$ be as in Definition \ref{def:1}, and let 
 $$J:=\{i\in\{1,\dots,g\}: h_{i,k}\equiv0 \text{ on }\bar\Omega_p\}.$$ Observe that $J\subset \{1, \dots , k-1\}$ and thus $\nabla ^\perp\eta _i = 0$ if $i\in J$ (see the paragraphs below Definition \ref{def:1}). Observe that $\lambda _k(\eta _i)$ is constant, for all $i\in J$, since $\langle \eta _k, \eta _k\rangle$ is constant by assumption and $\langle \eta _k, \eta _k\rangle =\langle\eta _i,\eta _k\rangle$, since $h_{i,k}=0$.

One has that 
$\hat \nu: = \text {span}\{\eta _i:i\in J\}$
defines a parallel subbundle of $\tilde \nu _0M$ and 
\begin{equation}\label{eq:3s}
\lambda _k(\zeta) \text{ is constant for any parallel section } \zeta \text{ of }  \hat \nu.
\end{equation}

Let $\xi$ be a parallel normal field of $M$ with $\xi_p = (\eta_k)_p$. By the Cauchy-Schwarz inequality, since $\xi$ and $\eta _k$ have constant lengths, 
\begin{equation}\label{eq:4s}
    \lambda _k(\xi) \text{ attains a maximum at } p.
\end{equation}

On the one hand, $\lambda_k(\xi_p) = \lambda_i(\xi_p)$ if and only if 
$h_{i,k}(p)=0$, or equivalently, if and only if $i\in J$. On the other hand,  if  $i\in J$, then $\|\eta _i\|\geq \|\eta _k\|$ and hence $$\lambda _i((\eta _i)_p)>\lambda _k((\eta_i)_p)$$
by the Cauchy-Schwarz inequality.  It is standard to show that there exists  $z\in \hat \nu _p$ such that $(\eta _k)_p + z = \xi _p + z$ distinguishes all eigenvalue functions $\lambda _1, \dots , \lambda _g$. Let $\zeta$ be a parallel section of $\hat \nu$ such that $\zeta _p = z$. Then, by \eqref{eq:3s} and \eqref{eq:4s},

\noindent(a) $\lambda _k(\xi +\zeta)$ attains its maximum at $p$ and thus,  $\mathrm{d}_p(\lambda _k(\xi +\zeta)) = 0$

\noindent (b) $\xi +\zeta $ distinguishes all eigenvalue functions near $p$.
\medskip

Let $$T^p : = A_{\xi + \zeta} - \lambda _k(\xi + \zeta)\operatorname{Id}.$$
Then $T^p$ satisfies the Codazzi identity at $p$ and $\ker T^p = E_k$ near $p$. This implies that $E_k$ is an autoparallel distribution at $p$ (see above mentioned references). 
Since the set $\bar \Omega$ of generic  points is dense, we conclude that 
$E_k$ is autoparallel on $M$.  Since $\eta_k$ is nowhere parallel, Lemma \ref{lem:2.1} implies part~(1).

Let $X$ and $Y$ be tangent vector fields on $M$ lying in $E_k^\perp$. By part~(1),
$$
\nabla^\perp_X\nabla^\perp_Y\eta_k
=
0
=
\nabla^\perp_Y\nabla^\perp_X\eta_k.
$$
Since $\eta_k$ is a section of the flat bundle $\tilde\nu_0M$,
$R^\perp_{X,Y}\eta_k=0$.
Hence,
$$
\nabla^\perp_{[X,Y]}\eta_k=0.
$$
Since \(\eta_k\) is nowhere parallel, it follows from the above equality and Lemma~\ref{lem:2.1}(1) that \(E_k^\perp\) is involutive. Therefore, \(E_k^\perp\) is parallel.
\end{proof}

\begin{lemma}\label{lem:4.1+} 
Under the notation and assumptions of this section, let \(N\) be an arbitrary integral manifold of \(E_k^\perp\) and \(X\) a unit vector field tangent to \(E_k\). Denote also by \(X\) its restriction to \(N\), and let \(\bar A\) be the shape operator of \(N\) as a hypersurface of \(M\) with respect to the unit normal field \(X\). Then 
\begin{enumerate}
\item $X$ is a parallel normal field of $N$, regarded as a submanifold of the ambient Euclidean space. 
\item $A_X= \bar A$.
\item \(E_i\) is invariant under \(A_X\), and \((A_X)_{|E_i}\) is a scalar multiple $\mu _i$ of the identity ($i\neq k$). 
\item $TN$ is invariant under all shape operators of $M$.
\end{enumerate}
\end{lemma}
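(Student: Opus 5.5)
The plan is to work directly from the facts in Lemma~\ref{lem:4.1}: $E_k$ is spanned by a unit field $X$, it is autoparallel (so $\nabla_XX=0$), and $E_k^\perp$ is integrable. Let $\bar\nabla$ be the Levi-Civita connection of $N$ and $D$ the Euclidean derivative. For a tangent field $Z$ of $N$ we decompose
\[
D_ZX=\nabla_ZX+\alpha(Z,X),
\]
where $\nabla$ is the connection of $M$. The tangential part $\nabla_ZX$ lies in $E_k^\perp=TN$, because $\langle\nabla_ZX,X\rangle=0$. By \eqref{eq:-1}, $Z\in E_k^\perp$ is a sum of vectors in the $E_i$, $i\neq k$, so $\alpha(Z,X)=0$. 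Hence $D_ZX=\nabla_ZX\in TN$. The normal space of $N$ in the Euclidean space is $\mathbb RX\oplus\nu M$. The $\mathbb RX$-component of $D_ZX$ is $\langle D_ZX,X\rangle=0$, and we have just shown its $\nu M$-component is zero. Therefore $\nabla^{\perp,N}_ZX=0$, which proves (1).

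For (2), the shape operator of $N$ in Euclidean space with respect to the normal $X$ is $-(D_ZX)^{T}=-\nabla_ZX$. This is exactly the shape operator $\bar A_XZ$ of $N$ as a hypersurface of $M$, so $A_X=\bar A$.

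For (3), take $Z\in E_i$, $i\neq k$, and write $\bar A Z=-\nabla_ZX$. We must show this lies in $E_i$ and is proportional to $Z$. The standard tool is the Codazzi equation applied to the section $\eta_k$ (or to parallel sections $\xi$ of $\tilde\nu_0M$), using the identities from \cite{DO}:
\[
\langle\nabla_ZX,W\rangle(\lambda_k-\lambda_i)(\xi)=\langle Z,W\rangle\,X(\lambda_i(\xi))\quad\text{type relations for } W\in E_i,
\]
together with $\langle\nabla_ZX,W\rangle(\lambda_k-\lambda_j)(\xi)=0$ for $W\in E_j$, $j\neq i,k$. Concretely, we apply Codazzi $(\nabla_ZA_\xi)X=(\nabla_XA_\xi)Z$ with $\xi$ parallel in $\tilde\nu_0M$ and project onto $E_j$. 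Since $X\in E_k$, we have $(\nabla_ZA_\xi)X=Z(\lambda_k(\xi))X+(\lambda_k(\xi)-A_\xi)\nabla_ZX$, and $(\nabla_XA_\xi)Z=X(\lambda_i(\xi))Z+(\lambda_i(\xi)-A_\xi)\nabla_XZ$. Projecting onto $E_j$ with $j\neq i,k$ gives $(\lambda_k-\lambda_j)(\xi)\,(\nabla_ZX)_{E_j}=(\lambda_i-\lambda_j)(\xi)\,(\nabla_XZ)_{E_j}$. Choosing $\xi$ with $\lambda_i(\xi)=\lambda_j(\xi)\neq\lambda_k(\xi)$ forces $(\nabla_ZX)_{E_j}=0$. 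Such a $\xi$ exists at a point, and can be extended parallelly, since $\eta_i,\eta_j,\eta_k$ are distinct vectors. The degenerate case, where $\eta_k-\eta_j$ is parallel to $\eta_k-\eta_i$ with no suitable $\xi$, still needs checking.

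Projecting onto $E_i$ instead gives $(\lambda_k-\lambda_i)(\xi)\,(\nabla_ZX)_{E_i}=X(\lambda_i(\xi))Z$. This shows $(\nabla_ZX)_{E_i}$ is a multiple of $Z$, with coefficient $-\mu_i=X(\lambda_i(\xi))/(\lambda_k-\lambda_i)(\xi)$; here we choose $\xi$ with $\lambda_k(\xi)\neq\lambda_i(\xi)$. This coefficient is independent of $Z$, so $A_X|_{E_i}=\mu_i\operatorname{Id}$.

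Item (4) follows because every shape operator of $M$ leaves each $E_i$ invariant (by \eqref{eq:-1} and the Ricci identity). Hence each shape operator leaves $TN=\bigoplus_{i\neq k}E_i$ invariant.

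The main obstacle I expect is the choice of $\xi$ separating the eigenvalues in (3). This requires $\eta_i$, $\eta_j$, $\eta_k$ to be pairwise distinct. It also requires the ratio of the $\lambda$-differences to give enough freedom, which I would guarantee by working at a generic point and using density.
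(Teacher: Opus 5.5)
Your parts (1), (2) and (4) are correct and essentially follow the paper: $\alpha(E_k,TN)=0$ together with $\langle\nabla_ZX,X\rangle=0$ gives (1)--(2), and \eqref{eq:-1} gives (4). In (3), your $E_i$-projection, which gives $(\nabla_ZX)_{E_i}$ as a $Z$-independent multiple of $Z$, is also fine. The gap is in showing $(\nabla_ZX)_{E_j}=0$ for $j\neq i,k$, and it is exactly the case you flagged.

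Your identity
\[
(\lambda_k-\lambda_j)(\xi)\,(\nabla_ZX)_{E_j}=(\lambda_i-\lambda_j)(\xi)\,(\nabla_XZ)_{E_j}
\]
is linear in $\xi_p\in(\tilde\nu_0M)_p$. Suppose $\eta_k-\eta_j=c(\eta_i-\eta_j)$ at $p$, i.e.\ $\eta_k$ lies on the line through $\eta_i,\eta_j$, with $c\neq 0,1$. Then every choice of $\xi$ yields the same single relation $c\,(\nabla_ZX)_{E_j}=(\nabla_XZ)_{E_j}$, which forces neither side to vanish. So distinctness of $\eta_i,\eta_j,\eta_k$ is not enough. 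Passing to generic points and using density does not help either. Collinearity is a closed condition that may hold on an open set. At this stage nothing excludes it, because the results that constrain the position of $\eta_k$ relative to the other curvature normals, such as Lemma~\ref{lem:acople}, are proved using the present lemma.

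The missing ingredient is the integrability of $E_k^\perp$ (Lemma~\ref{lem:4.1}(2)), or equivalently the self-adjointness of $\bar A$. You quote it but never use it in (3). Take $Z\in E_i$ and $W\in E_j$. Pair your identity with $W$. Then write the same identity for $W$ in place of $Z$ (that is, with $i$ and $j$ interchanged, projecting onto $E_i$) and pair it with $Z$. Using $\langle\nabla_XW,Z\rangle=-\langle W,\nabla_XZ\rangle$, you get
\[
(\lambda_k-\lambda_j)(\xi)\,\langle\nabla_ZX,W\rangle=(\lambda_i-\lambda_j)(\xi)\,\langle\nabla_XZ,W\rangle=(\lambda_k-\lambda_i)(\xi)\,\langle\nabla_WX,Z\rangle .
\]
Integrability gives $\langle\nabla_ZX,W\rangle-\langle\nabla_WX,Z\rangle=\langle X,[W,Z]\rangle=0$. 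Comparing the outer terms then yields $(\lambda_i-\lambda_j)(\xi)\,\langle\bar AZ,W\rangle=0$. A $\xi$ with $\langle\xi,\eta_i-\eta_j\rangle\neq0$ always exists since $\eta_i\neq\eta_j$, so $\langle\bar AZ,W\rangle=0$. This removes the case distinction entirely. It is the standard Codazzi argument the paper cites for (3).
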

\begin{proof}
The first two assertions follow easily from the fact that \(X\) is a parallel normal field of \(N\), regarded as a hypersurface of \(M\), and that \(\alpha(E_k,TN)=0\). The proof of (3) is standard, using the Codazzi identity; see, for instance, the proof of Lemma~3.5(3) in \cite{O1} or that of Lemma~4.11 in \cite{DO}. Part (4) follows from \eqref{eq:-1}, since \(TN\) is the direct sum of the restrictions to \(N\) of all eigendistributions except \(E_k\).
\end{proof}

Let us denote by
\begin{equation}
\label{eq:notation1}
\tilde{\nu}_0N:=\mathbb{R}X_{|N}\oplus (\tilde{\nu}_0M)_{|N}.
\end{equation}
This is a parallel and flat subbundle of the normal bundle of \(N\).

\medskip

\begin{corollary}\label{cor:11}
The different eigendistributions of \(N\) determined by the parallel subbundle
\(\tilde{\nu}_0N\subset\nu_0N\) coincide with the restrictions to \(N\) of the
eigendistributions, different from \(\mathbb R X\), associated to
\(\tilde{\nu}_0M\). Moreover, the corresponding curvature normals $\tilde \eta _i$ are given by 
\begin{equation}\label{eq:rst2}
\tilde \eta_i:=\mu_iX_{|N} + (\eta_i)_{|N},
\qquad i=1,\ldots,g,\; i\neq k.
\end{equation}\qed 
\end{corollary}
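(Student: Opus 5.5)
We need to prove Corollary \ref{cor:11}: the eigendistributions of $N$ with respect to $\tilde\nu_0N=\mathbb RX_{|N}\oplus(\tilde\nu_0M)_{|N}$ are the restrictions $E_{i|N}$, $i\neq k$, and the corresponding curvature normals are $\tilde\eta_i=\mu_iX+\eta_i$.

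The approach is to compute the shape operator of $N\subset\mathbb R^{n+p}$ in the direction of an arbitrary $\xi=cX+\zeta\in\tilde\nu_0N$, where $c\in\mathbb R$ and $\zeta\in(\tilde\nu_0M)_{|N}$.

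First, $\tilde\nu_0N$ is indeed a normal subbundle of $N$. It is parallel and flat for two reasons.
\begin{itemize}
\item By Lemma \ref{lem:4.1+}(1), $X$ is a parallel normal field of $N$.
\item A parallel section $\zeta$ of $\tilde\nu_0M$ restricts to a normal field of $N$. For $Z\in TN$, the derivative $\mathrm d\zeta(Z)=-A_\zeta Z+\nabla^\perp_Z\zeta=-A_\zeta Z$ lies in $TN$. This holds because $TN$ is $A_\zeta$-invariant by Lemma \ref{lem:4.1+}(4), and because $A_\zeta$ preserves each $E_i$.
\end{itemize}
Hence $\zeta_{|N}$ is $\nabla^{\perp,N}$-parallel, and its shape operator as a normal field of $N$ is $(A_\zeta)_{|TN}$. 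A parallel frame of $\tilde\nu_0N$ then shows the bundle is flat.

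Next, by linearity the shape operator of $N$ is
\[
A^N_\xi=c\,A_X+(A_\zeta)_{|TN}.
\]
Here $A_X$ is the $N$-shape operator of $X$, which equals $\bar A$ by Lemma \ref{lem:4.1+}(2). Now use $TN=\bigoplus_{i\neq k}E_{i|N}$. By Lemma \ref{lem:4.1+}(3), $A_X$ acts on $E_{i|N}$ as $\mu_i\operatorname{Id}$, and by \eqref{eq:00}, $A_\zeta$ acts there as $\langle\zeta,\eta_i\rangle\operatorname{Id}$. Therefore
\[
A^N_\xi\big|_{E_{i|N}}=\bigl(c\mu_i+\langle\zeta,\eta_i\rangle\bigr)\operatorname{Id}=\langle\xi,\mu_iX+\eta_i\rangle\operatorname{Id},
\]
using $X\perp\tilde\nu_0M$ and $|X|=1$.

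It remains to show that the $E_{i|N}$, $i\neq k$, are exactly the eigendistributions, rather than coarser unions of them. This requires $\tilde\eta_i\neq\tilde\eta_j$ for $i\neq j$, and that holds because the $\eta_i$ are pairwise distinct along their $\tilde\nu_0M$-components. Since all $A^N_\xi$ are simultaneously diagonal with these eigenvalue functionals, the common eigenspace decomposition is $\{E_{i|N}\}_{i\neq k}$, with curvature normals $\tilde\eta_i$. This gives \eqref{eq:rst2}.

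The only delicate point is the parallelity of $\zeta_{|N}$ in $\nu N$, which is handled by the invariance in Lemma \ref{lem:4.1+}(4). Everything else is bookkeeping.
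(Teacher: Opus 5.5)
Your proof is correct and follows the paper's own route. The paper states this corollary without a written proof, as an immediate consequence of Lemma~\ref{lem:4.1+}: $X$ is a parallel normal field of $N$, $A_X=\bar A$ acts as $\mu_i\operatorname{Id}$ on each $E_{i|N}$, and $TN$ is invariant under all shape operators of $M$. You unpack exactly these facts, and also supply the small details the paper leaves implicit: the parallelity of $\zeta_{|N}$, and the fact that the $\tilde\eta_i$ are pairwise distinct because their $\tilde\nu_0M$-components $\eta_i$ are.
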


We will need the following general result later. Since we have not found a
proof in the literature, we include one.

\begin{lemma}\label{lem:aux9}
Let $S$ be a Euclidean submanifold and let $\hat\nu _0 S$ be a parallel and
flat subbundle of $\nu_0S$. Assume that the different eigendistributions
$E_1,\dots,E_g$ determined by $\hat\nu_0 S$ have constant dimension and are
therefore smooth ($g\geq 2$). Assume that each eigendistribution is autoparallel and
that $E_1^\perp=E_2\oplus\dots\oplus E_g$ is integrable. Then $E_1^\perp$
is autoparallel, and $S$ is locally a product of submanifolds
$S=S_1\times S_2$, where $S_1$ is an integral manifold of $E_1$ and
$S_2$ is an integral manifold of $E_1^\perp$.
\end{lemma}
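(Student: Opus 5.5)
The plan is to show that $D_1:=E_1$ and $D_2:=E_1^\perp$ are both autoparallel. They are complementary and orthogonal, so this makes them mutually parallel distributions. De Rham's decomposition (local version) then gives a local Riemannian product $S=S_1\times S_2$. Next, \eqref{eq:-1}, i.e. $\alpha(E_i,E_j)=0$ for $i\neq j$ (which holds since each $E_i$ is invariant under every shape operator), gives $\alpha(D_1,D_2)=0$. Moore's lemma then upgrades the Riemannian product to an extrinsic product.

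The main step is autoparallelity of $E_1^\perp$. Let $Y,Z$ be sections of $E_1^\perp$ and $W$ a section of $E_1$; we must show $\langle\nabla_YZ,W\rangle=0$. It suffices to treat $Y\in E_i$ and $Z\in E_j$ with $i,j\ge2$.

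If $i=j$, this follows from autoparallelity of $E_i$, since $\nabla_YZ\in E_i\perp E_1$.

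If $i\neq j$, write $\langle\nabla_YZ,W\rangle-\langle\nabla_ZY,W\rangle=\langle[Y,Z],W\rangle=0$ by integrability of $E_1^\perp$. Hence $\langle\nabla_YZ,W\rangle$ is symmetric in $Y,Z$. Note that $\langle \nabla_YZ,W\rangle=-\langle Z,\nabla_YW\rangle$. To extract more information, use the Codazzi equation for a parallel section $\xi$ of $\hat\nu_0S$ with $A_\xi|_{E_l}=\lambda_l(\xi)\,\mathrm{Id}$:
\[
(\nabla_Y A_\xi)W=(\nabla_W A_\xi)Y .
\]
The left side has $E_j$-component
\[
\langle(\nabla_YA_\xi)W,Z\rangle=(\lambda_1(\xi)-\lambda_j(\xi))\langle\nabla_YW,Z\rangle .
\]
On the right, we use $Y(\lambda_1)$-type terms together with the fact that $\nabla_W Y\in E_i$ (because $E_1^\perp$ is integrable and, once we show it, autoparallel). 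To avoid circularity, compute the right side directly:
\[
\langle(\nabla_WA_\xi)Y,Z\rangle=(\lambda_i(\xi)-\lambda_j(\xi))\langle\nabla_WY,Z\rangle .
\]
So we get the identity
\[
(\lambda_1-\lambda_j)\langle\nabla_YW,Z\rangle=(\lambda_i-\lambda_j)\langle\nabla_WY,Z\rangle .
\]
Exchanging the roles of $Y$ and $Z$ gives the analogous identity with $i$ and $j$ swapped. Using the symmetry $\langle\nabla_YZ,W\rangle=\langle\nabla_ZY,W\rangle$, we get $\langle\nabla_YW,Z\rangle=\langle\nabla_ZW,Y\rangle$.

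Also, $\langle\nabla_WY,Z\rangle=-\langle\nabla_WZ,Y\rangle$, and $[W,Y]$, $[W,Z]$ have $E_1^\perp$-components controlled by these same quantities. Combining the two Codazzi identities with skew-symmetry yields a linear system in $a=\langle\nabla_YW,Z\rangle$ and $b=\langle\nabla_WY,Z\rangle$:
\[
(\lambda_1-\lambda_j)a=(\lambda_i-\lambda_j)b,\qquad (\lambda_1-\lambda_i)a=-(\lambda_j-\lambda_i)b=(\lambda_i-\lambda_j)b .
\]
Subtracting gives $(\lambda_i-\lambda_j)a=0$ for a $\xi$ distinguishing $E_i$ from $E_j$, hence $a=0$. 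This is exactly $\langle\nabla_YZ,W\rangle=0$.

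The expected obstacle is bookkeeping in the Codazzi computation: checking which terms vanish and confirming that $\xi$ can be chosen parallel with $\lambda_i(\xi)\neq\lambda_j(\xi)$. The latter is possible because distinct eigendistributions have distinct curvature normals, so a generic parallel section of the flat bundle separates them.

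With $E_1^\perp$ autoparallel and $E_1$ autoparallel by hypothesis, both are parallel. Local de Rham together with Moore's lemma (via \eqref{eq:-1}) finishes the proof.
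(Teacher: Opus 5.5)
Your proof is correct and follows the paper's route. Both arguments take a parallel section of $\hat\nu_0S$ that separates $\lambda_i$ from $\lambda_j$, apply the Codazzi identity, use integrability of $E_1^\perp$ to make $\langle\nabla_YZ,W\rangle$ symmetric, and finish with Moore's lemma via $\alpha(E_1,E_1^\perp)=0$. The only difference is that your two Codazzi identities for the mixed pairs $(Y,W)$ and $(Z,W)$ are a detour: eliminating $b$ from them gives exactly the paper's single identity $(\lambda_j-\lambda_1)\langle\nabla_YZ,W\rangle=(\lambda_i-\lambda_1)\langle\nabla_ZY,W\rangle$, which you get in one step by applying Codazzi directly to the pair $(Y,Z)$ and pairing with $W$.
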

\begin{proof}
Let $\eta_1,\dots,\eta_g$ be the curvature normals associated with
$E_1,\dots,E_g$. Let $q\in S$ and let $\xi$ be a parallel section of
$\hat\nu_0S$ such that the functions
\[
\lambda_i:=\langle\eta_i,\xi\rangle,\qquad i=1,\dots,g,
\]
are pairwise distinct at every point in a neighborhood of $q$. Note that
the shape operator satisfies
\[
A_\xi|_{E_i}=\lambda_i\operatorname{Id}_{E_i}.
\]
Let $i,j\geq2$, and let $X_i$ and $Y_j$ be tangent vector fields of $S$
that lie in $E_i$ and $E_j$, respectively. If $i=j$, then
$\nabla_{X_i}Y_j$ lies in $E_i$ and hence in $E_1^\perp$ by assumption.
Let us assume that $i\neq j$ and let $Z_1$ be a tangent vector field that
lies in $E_1$. Then
\[
\langle (\nabla_{X_i} A)_\xi Y_j, Z_1\rangle
= \langle(\lambda_i-\lambda_1)\nabla_{X_i}Y_j,Z_1\rangle.
\]
By the Codazzi identity, and using the fact that
$\langle [X_i,Y_j],Z_1\rangle=0$, since $E_1^\perp$ is integrable, we
obtain
\begin{eqnarray*}
0
&=&\left\langle
(\lambda_i-\lambda_1)\nabla_{X_i}Y_j
-(\lambda_j-\lambda_1)\nabla_{Y_j}X_i,Z_1
\right\rangle\\
&=&\left\langle
(\lambda_i-\lambda_1)\nabla_{X_i}Y_j
-(\lambda_j-\lambda_1)\nabla_{Y_j}X_i
-(\lambda_j-\lambda_1)[X_i,Y_j],Z_1
\right\rangle\\
&=&\left\langle
(\lambda_i-\lambda_j)\nabla_{X_i}Y_j,Z_1
\right\rangle.
\end{eqnarray*}
Then
$\langle\nabla_{X_i}Y_j,Z_1\rangle=0$, and therefore $E_1^\perp$ is
autoparallel. Since $E_1$ is also autoparallel, both distributions are
parallel. The lemma follows from Moore's lemma, since
$\alpha(E_1,E_1^\perp)=0$.
\end{proof}

\begin{remark}\label{rem:equalL}
Let $S$ be a Euclidean submanifold and let $\hat\nu _0 S$ be a parallel and
flat subbundle of $\nu_0S$. Assume that the different eigendistributions
$E_1,\dots,E_g$ determined by $\hat\nu_0 S$ have constant dimension and are
therefore smooth (we assume that $g\geq 2$; see Remark~\ref{rem:equalL2}).  Let  us assume that all the associated curvature normals $\eta _1, \dots , \eta _g$ have the same constant length. Then any eigendistribution is autoparallel. In fact, let $i\in \{1, \dots , g\}$, $g\geq 2$. If $\eta _i$ is a parallel normal section, then $E_i$ is autoparallel by Lemma \ref{lem:2.1}(1). If $\eta _i$ is not parallel we can choose it as a longest non-parallel curvature normal and Lemma \ref{lem:4.1}(1) implies that $E_i$ is autoparallel. Thus, for every $i\in 
\{1, \dots ,g\}$, $E_i$ is autoparallel. Moreover, if some $\eta _i$ is not parallel, then $E_i^\perp$ is integrable and hence $S$ splits by Lemma \ref{lem:aux9}. Then, if $S$ is locally irreducible, any curvature normal is parallel. In this case, any generic parallel section $\hat \nu _0S$ is a parallel and non-umbilical isoparametric section. Hence $S$ has constant principal curvatures by Theorem \ref{thm:iso-rank}.
\end{remark}

\medskip

\begin{remark}\label{rem:equalL2}
Let $S$ be a full Euclidean submanifold of dimension at least $2$, and
let $\hat\nu_0 S$ be a parallel and flat subbundle of $\nu_0S$. Assume
that the different eigendistributions $E_1,\dots,E_g$ determined by
$\hat\nu_0 S$ have constant dimension and are therefore smooth. If
$\dim\hat\nu_0 S\geq 2$, then $g\geq 2$. In fact, if $g=1$, the same argument as in the case of a flat normal
bundle, using the Codazzi identity, shows that $\eta_1$ is parallel.
Then a non-trivial parallel section $\xi$ of $\hat\nu_0S$ perpendicular
to $\eta_1$ satisfies $A_\xi=0$. Hence, $S$ reduces codimension, a
contradiction. 
\end{remark}

\section{The structure of submanifolds with curvature normals of constant length}\label{sec:structure}
Let $M^n\subset \mathbb R^{n+p}$ ($n\geq 2$) be a full local submanifold that is locally irreducible at every point. Let $\tilde{\nu}_0M$ be a parallel and flat subbundle of $\nu_0M$, and assume that the $\tilde{\nu}_0M$-curvature normals have constant length. We keep the notation and general assumptions of Section~\ref{sec:prel}.

Denote by
$$
S(x)\qquad (x\in M)
$$
the integral manifold of $E_k^\perp$ through $x$.
With this notation, any fixed integral manifold $N$ of $E_k^\perp$ is of the form
$N=S(p)$ for some $p\in M$. Once such an integral manifold has been fixed, we shall simply write $N$ as in the previous section.

The integral curve of $X$ starting at $q$ will be denoted by $\gamma _q(t)$. 
 If $\phi_t$ denotes the (local) flow of $X$, then
\begin{equation}\label{eq:flow}
\gamma_q(t)=\phi_t(q).
\end{equation}

Since we are working locally, we may assume that all these integral curves, starting at every $q\in N$, are defined on $(-\varepsilon,\varepsilon)$.
 Moreover, since $E_k$ is autoparallel and $X$ is a unit vector field tangent to $E_k$, each $\gamma_q$ is a unit-speed geodesic of $M$ (see Lemmas~\ref{lem:4.1} and~\ref{lem:4.1+}).
\smallskip

For any fixed $t\in(-\varepsilon,\varepsilon)$, define a vector field $\xi^t$ of $\mathbb R^{n+p}$ along $N$ by
\begin{equation}\label{eq:campo}
\xi^t_q :=\phi_t(q)-q=\gamma_q(t)-q.
\end{equation}

\begin{lemma}\label{lem:4.2} Let $t_0\in(-\varepsilon,\varepsilon)$ be arbitrary. Then 
\begin{enumerate}
    \item $\xi^{t_0}$ defines a parallel normal field of $N$. 
    \item $S(\gamma _q(t_0))$ does not depend on $q\in N$.
    \item  $S(\gamma _q(t_0))$ is a parallel manifold to $N$ in $\mathbb R^{n+p} $. Namely, $$N_{\xi ^{t_0}}= S(\gamma _q(t_0)).$$
\end{enumerate}
\end{lemma}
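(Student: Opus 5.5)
Since $E_k$ is autoparallel and one-dimensional (Lemma~\ref{lem:4.1}), each $\gamma_q$ is a unit speed geodesic of $M$ with $\gamma_q'=X$. We also have $\alpha(X,X)=\lambda$-type data: $A_\xi X=\langle\xi,\eta_k\rangle X$ for $\xi\in\tilde\nu_0M$. The plan is to prove all three statements together by showing that the flow $\phi_t$ maps $N$ onto an integral manifold of $E_k^\perp$. Then $\xi^t$ is the displacement between $N$ and its image, and we check that it is a parallel normal field.

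\textbf{Step 1: $E_k^\perp$ is preserved by the flow of $X$.} For $Y$ tangent to $E_k^\perp$, compute $\langle [X,Y],X\rangle=\langle\nabla_XY,X\rangle-\langle\nabla_YX,X\rangle=-\langle Y,\nabla_XX\rangle-0=0$, since $\nabla_XX=0$ and $|X|=1$. Hence $d\phi_t$ maps $E_k^\perp$ into itself. Since $E_k^\perp$ is integrable (Lemma~\ref{lem:4.1}(2)), $\phi_t(N)$ is an integral manifold of $E_k^\perp$. As it contains $\gamma_q(t_0)$ for every $q\in N$, connectedness gives $\phi_{t_0}(N)=S(\gamma_q(t_0))$ for every $q$, which is (2).

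\textbf{Step 2: $\xi^{t_0}$ is normal to $N$ and parallel.} Fix $t_0$ and let $v\in T_qN$. Let $J(t)=d\phi_t(v)$, which is a Jacobi-type variation field along the geodesics $\gamma_q$. In $\mathbb R^{n+p}$,
\[
\frac{d}{dt}\langle \gamma_q(t)-q, v\rangle=\langle X,v\rangle=0 ?
\]
This is not immediate, because $X_{\gamma_q(t)}$ is compared with $v$ at $q$. Instead, I would show that the Euclidean derivative of $X$ in directions of $E_k^\perp$ vanishes along the relevant directions:
\[
D_YX=\nabla_YX+\alpha(Y,X).
\]
Here $\alpha(Y,X)=0$ by \eqref{eq:-1}, and $\nabla_YX\in E_k^\perp$. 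Note that $\nabla_YX=-A_X^N Y$, where $A^N_X$ is the shape operator of the leaf, namely $\bar A=A_X$ from Lemma~\ref{lem:4.1+}.

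The key claim is that along $\gamma_q$ the field $X$ is constant as a Euclidean vector. Indeed, $D_XX=\nabla_XX+\alpha(X,X)=\alpha(X,X)$, which need not vanish, so this is false in general. So the approach has to be different. I will show that the map $q\mapsto\gamma_q(t_0)$ equals $q+\xi^{t_0}_q$ with $d\xi^{t_0}(v)\in T_qN$ and $\xi^{t_0}\perp T_qN$.

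For normality, differentiate $f(t)=\langle\gamma_q(t)-q,v\rangle$ twice. We get $f'=\langle X,v\rangle$ and $f''=\langle\alpha(X,X),v\rangle=0$, since $\alpha$ is normal to $M$. With $f(0)=0$ and $f'(0)=\langle X_q,v\rangle=0$, we get $f\equiv0$. The same argument applied to $v$ replaced by a normal vector $\zeta$ of $N$ would not be needed.

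For parallelism, differentiate along $c(s)\subset N$ with $c'(0)=v$:
\[
\frac{d}{ds}\xi^{t_0}_{c(s)}=d\phi_{t_0}(v)-v.
\]
Since $d\phi_{t_0}(v)\in E_k^\perp(\gamma_q(t_0))$ by Step 1, it remains to show that $T_{\gamma_q(t_0)}S(\gamma_q(t_0))=T_qN$ as linear subspaces of $\mathbb R^{n+p}$. This is where I would use Lemma~\ref{lem:4.1+}(4): the spaces $E_i$, $i\ne k$, are invariant under all shape operators and $A_X|_{E_i}=\mu_i\,\mathrm{Id}$. Consider along $\gamma_q$ the Euclidean subspace $W(t)=T_{\gamma_q(t)}S(\gamma_q(t))$, and let $Y$ be a local frame of $E_k^\perp$ that is $\phi_t$-invariant. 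Then
\[
D_XY=D_YX=-A^N_XY+\alpha(X,Y)\in E_k^\perp,
\]
using $[X,Y]=0$ and $\alpha(X,Y)=0$. Hence $W(t)$ is constant in $t$, and the derivative above lies in $T_qN$.

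\textbf{Step 3: conclusion.} Steps 1 and 2 give (1). Then (3) follows because the map $q\mapsto q+\xi^{t_0}_q=\phi_{t_0}(q)$ has image $S(\gamma_q(t_0))$ by Step 1.

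The main obstacle is Step 2, in particular making precise that $W(t)$ is constant as a Euclidean subspace. That requires the identity $D_XY\in E_k^\perp$ for flow-invariant $Y$, which relies on $\alpha(E_k,E_k^\perp)=0$ and on $\nabla_YX\in E_k^\perp$, the latter coming from autoparallelity and $|X|=1$.
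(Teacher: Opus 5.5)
Your argument is, in substance, the paper's. Everything rests on two facts:
(a) $d\phi_t$ maps $T_qN$ into $E_k^\perp$. Your bracket identity $\langle [X,Y],X\rangle=0$ is the same fact the paper gets by noting that the Jacobi field of the variation $s\mapsto\gamma_{c(s)}$ through unit-speed geodesics stays perpendicular to $\gamma_q'$.
(b) $E_k^\perp$ is a constant Euclidean subspace along each $\gamma_q$. You use flow-invariant frames and $D_XY=D_YX=\nabla_YX+\alpha(X,Y)$; the paper uses $D_XZ=\nabla_XZ+\alpha(X,Z)$ with $\nabla_XZ\perp X$, since $\nabla_XX=0$.

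Parallelism of $\xi^{t_0}$ and parts (2) and (3) then follow as you say.

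One step is incorrectly justified. To prove that $\xi^{t_0}$ is normal, you write $f''(t)=\langle\alpha(X,X)_{\gamma_q(t)},v\rangle=0$ ``since $\alpha$ is normal to $M$''. But $\alpha(X,X)_{\gamma_q(t)}$ is normal to $T_{\gamma_q(t)}M$, whereas $v$ is a fixed vector of $T_qM$. These subspaces differ in general: $D_XX=\alpha(X,X)=\eta_k\neq0$, so $X$ turns in $\mathbb R^{n+p}$. This is exactly the objection you raised one line earlier against $\langle X_{\gamma_q(t)},v\rangle=0$.

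The repair is to reorder and prove (b) first, i.e.\ $T_{\gamma_q(t)}S(\gamma_q(t))=T_qN$. Then $h_q(t)=\gamma_q(t)-q$ satisfies $h_q'(t)=X_{\gamma_q(t)}\in\nu_{\gamma_q(t)}S(\gamma_q(t))=\nu_qN$ and $h_q(0)=0$. Hence $\xi^{t_0}_q\in\nu_qN$ directly, with no second derivative needed; this is how the paper argues. Normality must be in place before you read off $\nabla^\perp_v\xi^{t_0}=0$ from $d\phi_{t_0}(v)-v\in T_qN$, and after the reordering it is.
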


\begin{proof}
Since $E_k$ is autoparallel, $E_k^\perp$ is parallel along $E_k$ in $M$. Moreover, since $\alpha(E_k,E_k^\perp)=0$, it follows that $E_k^\perp$ is constant along the integral curves of $E_k$ in $\mathbb R^{n+p} $. Therefore, for every $q\in N$ and every $t\in(-\varepsilon,\varepsilon)$,
\begin{equation}\label{eq:proof1}
T_{\gamma_q(t)}S(\gamma_q(t))=T_qN,
\end{equation}
when both tangent spaces are regarded as subspaces of $\mathbb R^{n+p} $. Consequently,
\begin{equation}\label{eq:proof2}
\nu_{\gamma_q(t)}S(\gamma_q(t))=\nu_qN,
\end{equation}
where the normal spaces are taken in $\mathbb R^{n+p} $.

For any fixed $q\in N$, let us consider the curve of $\mathbb R^{n+p} $ $t\mapsto h_q(t):=\xi ^t_q$. Then 
\begin{eqnarray*}\frac{\,\mathrm d}{\mathrm d t}h_q(t) &=& \frac{\,\mathrm d}{\mathrm d t}(\phi _t(q) -q) = X_{\phi _t(q)}
\\ &=& X_{\gamma _q(t)}\in 
\nu_{\gamma_q(t)}S(\gamma_q(t))=\nu _qN.
\end{eqnarray*}
Since $h_q(0)=0$ and $\nu_qN$ is a vector subspace of $\mathbb R^{n+p} $, it follows that
$
h_q(t)\in \nu_qN
$
for all $t\in(-\varepsilon,\varepsilon)$. Therefore,
$
\xi^{t_0}_q\in \nu_qN
$
for every $t_0\in(-\varepsilon,\varepsilon)$. This shows that $\xi^{t_0}$ is a normal field of $N$.

Let $q\in N$, $v\in T_qN$ be arbitrary and let $c(s)$ be a smooth curve in $N$ such that $c(0)=q, \, c'(0)=v$. Then  $\gamma _{c(s)}$ is a smooth variation of unit-speed geodesics of $M$. Hence, 
the Jacobi field along $\gamma _{q}(t)$,  $$J(t):=\tfrac{\, \partial}{\partial s}|_{0}\, \gamma _{c(s)}(t)$$ is everywhere perpendicular to 
$\gamma' _q(t)= X_{\gamma _q(t)}$. Therefore, by \eqref{eq:proof1}, 
$$J(t) \in T_{\gamma _q(t)}S(\gamma _q(t)) = T_qN.$$
If $t_0\in (-\varepsilon , \varepsilon )$, then 
\begin{eqnarray*}
  \frac{\,\mathrm d}{\mathrm d s}|_{0}\, \xi ^{t_0}_{c(s)} &=& \frac{\,\mathrm d}{\mathrm d s}|_{0}(\gamma _{c(s)}(t_0)-c(s)) 
  =
  \frac{\,\mathrm d}{\mathrm d s}|_{0}\, \gamma _{c(s)}(t_0) -v \\ &=& J(t_0) -v \in T_qN.
\end{eqnarray*}
Then $\nabla ^\perp _v\xi ^{t_0} =0$, which finishes the proof of (1).

Let $q,q'\in N$ and let $c:[0,1]\to N$ be a smooth curve with $c(0)=q$ and $c(1)=q'$. Consider the variation of unit-speed geodesics $\gamma_{c(s)}$. By the same arguments as in the proof of~(1), for any $s_0\in [0,1]$, the Jacobi field along $\gamma_{c(s_0)}(t)$,
$$
J^{s_0}(t):=\tfrac{\, \partial}{\partial s}|_{s_0}\, \gamma_{c(s)}(t),
$$
is everywhere perpendicular to $\gamma'_{c(s_0)}(t)$. In particular,
$$
J^{s_0}(t_0)\in (E_k^\perp)_{\gamma_{c(s_0)}(t_0)}
$$
for every $t_0\in (-\varepsilon,\varepsilon)$.  The curve 
$$s\mapsto \gamma _{c(s)}(t_0)=\phi _{t_0}(c(s))$$ joins $\gamma _q(t_0)$ with $\gamma _{q'}(t_0)$ and its velocity coincides with 
$J^s(t_0)\in (E_k^\perp)_{\gamma _{c(s)}(t_0)}$. Then 
\begin{equation}\label{eq:igualdad4}
S (\phi _{t_0}(q)) = S(\gamma _q(t_0)) = S(\gamma _{q'}(t_0)) = S (\phi _{t_0}(q')),
\end{equation}
where $q$ and $q'$ are arbitrary elements of $N$.
This implies that $\phi _{t_0}: N\to S (\phi _{t_0}(q))$ is a diffeomorphism with inverse $\phi _{-t_0}$.

Observe from \eqref{eq:campo} that the parallel map 
\begin{equation}\label{eq:parmap}
    x\mapsto x + \xi ^{t_0}_x \qquad x\in N,
    \end{equation}
    coincides with the restriction to $N$ of  $\phi _{t_0}$. Therefore, $N_{\xi ^{t_0}} = S(\phi _{t_0}(q))$.
\end{proof}

\subsection{Recovering M from N} \label{rec} 

We keep the notation and general assumptions of the previous sections.  Since we work locally, we may assume that the map, from
$(-\varepsilon,\varepsilon)\times N$ into $M$, 
$
(t,q)\mapsto \phi_t(q)= \gamma _q(t)$
is a diffeomorphism. 
\medskip

From now on, we assume that $$\tilde \nu_0M=\nu_0M.$$
In this case,
\begin{equation}\label{eq:u8e}
\nu_0N=\mathbb RX\oplus\hat\nu_0N
\end{equation}
where $\hat \nu _0N = \nu _0M|N$.
 The above equality follows from the vanishing of
$R^\perp_{E_k,E_k^\perp}$, the fact that $E_k$ is one-dimensional,
and the integrability of $E_k^\perp$ (see, for instance,
Lemma~5.1.11 of \cite{BCO}). 

\begin{lemma}\label{lem:n_k} \ 

\begin{enumerate}
    \item $\alpha(X,X)$ is a section of $\nu _0M$.
    \item $\alpha(X,X) = \eta _k$.
    \end{enumerate}
\end{lemma}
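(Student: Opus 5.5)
The key observation is that $\alpha(X,X)$ is the normal component of the derivative of $X$ along its own integral curves, and Lemma~\ref{lem:4.2} gives strong control over how $X$ moves. More precisely, by (\ref{eq:proof2}) the normal space of $S(\gamma_q(t))$ in $\mathbb R^{n+p}$ equals $\nu_qN$ for all $t$. Since $\nu_qN=\mathbb RX_q\oplus\nu_qM$ and $E_k^\perp$ is constant along $\gamma_q$, the subspace $\mathbb R X_{\gamma_q(t)}\oplus\nu_{\gamma_q(t)}M$ is constant in $t$; it is the orthogonal complement of the fixed space $T_qN$.

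\textbf{Part (1).} The plan is to show $R^\perp_{Y,Z}\alpha(X,X)=0$ for all $Y,Z$, or equivalently that $\alpha(X,X)$ is orthogonal to every normal vector $\zeta$ with $R^\perp$-nontrivial behaviour. I would use the Ricci equation: $\langle R^\perp_{Y,Z}\xi,\zeta\rangle=\langle[A_\xi,A_\zeta]Y,Z\rangle$. So it suffices to show that $A_{\alpha(X,X)}$ commutes with every shape operator $A_\zeta$. Since $E_k=\mathbb RX$ is invariant under all shape operators by (\ref{eq:-1}), each $A_\zeta$ preserves the splitting $\mathbb RX\oplus TN$. Moreover, $\langle A_\zeta X,X\rangle=\langle\alpha(X,X),\zeta\rangle$.

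Next, I would compute $A_{\alpha(X,X)}$ restricted to $TN$ via Codazzi. Take $Y\in TN$. Because $E_k$ is autoparallel and one-dimensional, $\nabla_XX=0$. Then
\[
\langle(\nabla_YA)_\zeta X,X\rangle=\langle(\nabla_XA)_\zeta Y,X\rangle
\]
links $Y\langle\alpha(X,X),\zeta\rangle$ with terms of the form $\langle\alpha(\nabla_XY,X),\zeta\rangle$ and $\langle\alpha(Y,\nabla_X X),\zeta\rangle$. The expected outcome is that the $TN$-part of $A_{\alpha(X,X)}$ is expressed through the operator $\bar A=A_X$ of Lemma~\ref{lem:4.1+}. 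Since $\nabla_YX=-A_XY$, this part is diagonal on each $E_{i|N}$ by Lemma~\ref{lem:4.1+}(3). An operator that is diagonal on the common eigenspaces $E_i$ commutes with all $A_\zeta$, because every $A_\zeta$ preserves each $E_i$. Together with $\alpha(E_i,E_j)=0$, this would give $R^\perp\alpha(X,X)=0$, i.e.\ $\alpha(X,X)\in\nu_0M$.

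An alternative route, which I would try first, is cleaner. Differentiate $X$ along $\gamma_q$: $X'=\alpha(X,X)$, since $\nabla_XX=0$. Because $X$ stays in the fixed space $\mathbb RX\oplus\nu M$ and has unit length, this gives $\alpha(X,X)\in\nu M$, which is automatic. The real content is flatness, which is why the Ricci-equation route is needed.

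\textbf{Part (2).} Once $\alpha(X,X)\in\nu_0M$, let $\xi\in\nu_0M$ be arbitrary. Then $\langle\alpha(X,X),\xi\rangle=\langle A_\xi X,X\rangle=\lambda_k(\xi)=\langle\eta_k,\xi\rangle$ by (\ref{eq:00}). Since both $\alpha(X,X)$ and $\eta_k$ lie in $\nu_0M$, it follows that $\alpha(X,X)=\eta_k$.

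\textbf{Main obstacle.} The hard step is part (1): showing that the component of $\alpha(X,X)$ in $(\nu_0M)^\perp$ vanishes. I would attack it through the Codazzi computation above, with the aim of showing $[A_{\alpha(X,X)},A_\zeta]=0$ on $TN$.
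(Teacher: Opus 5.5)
Your Part (2) is correct and is the paper's argument. Part (1), however, has a genuine gap.

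\textbf{The Ricci step.} The step ``this would give $R^\perp\alpha(X,X)=0$, i.e.\ $\alpha(X,X)\in\nu_0M$'' is not valid. Even if you established $[A_{\alpha(X,X)},A_\zeta]=0$ for all $\zeta$, the Ricci equation would only put $\alpha(X,X)$ pointwise in the kernel of $R^\perp$. The flat part $\nu_0M$ is the maximal parallel flat subbundle; fibrewise it consists of the vectors fixed by the local normal holonomy group, and it can be strictly smaller than that pointwise kernel. To land in $\nu_0M$ you need a parallelism statement, not a pointwise curvature statement.

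\textbf{The Codazzi step.} This step is not carried out, and the identity you wrote does not give what you expect. Use $A_\zeta X=\langle\alpha(X,X),\zeta\rangle X$, $\nabla_XX=0$ and $\alpha(X,TN)=0$. Then $\langle(\nabla_YA)_\zeta X,X\rangle=\langle(\nabla_XA)_\zeta Y,X\rangle$ reduces to $\langle\nabla^\perp_Y\alpha(X,X),\zeta\rangle=0$ for $Y\in TN$. It gives no expression for $A_{\alpha(X,X)}|_{TN}$ in terms of $\bar A$. What it actually shows is that $\alpha(X,X)$ is $\nabla^\perp$-parallel along $N$.

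\textbf{The missing idea.} It is in the ``alternative route'' you set aside: there you used the fixed subspace $\nu_qN$ when the correct one is $(\nu_0N)_q$.
By Lemma~\ref{lem:4.2}(1), $\gamma_q(t)-q=\xi^t_q$ is the value at $q$ of a parallel normal field of $N$. Hence it lies in $(\nu_0N)_q$, not merely in $\nu_qN$, for every small $t$. Differentiating twice at $t=0$ and using \eqref{eq:u8e},
\[
\alpha(X_q,X_q)=\gamma_q''(0)\in(\nu_0N)_q=\mathbb RX_q\oplus(\nu_0M)_q .
\]
Since $\gamma_q''(0)\perp X_q$, this gives $\alpha(X_q,X_q)\in(\nu_0M)_q$. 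This is exactly the paper's proof.

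\textbf{A repair along your own lines.} Since $TN$ is invariant under $A_{\alpha(X,X)}$ (Lemma~\ref{lem:4.1+}(4)), the $\nu N$-component of $D_Y\alpha(X,X)$ is $\nabla^\perp_Y\alpha(X,X)=0$. So $\alpha(X,X)_{|N}$ is a parallel normal field of $N$, hence lies in $\nu_0N$, and the same decomposition \eqref{eq:u8e} finishes the argument. Either way, the commutation-of-shape-operators route is unnecessary.
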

\begin{proof}
Let $p\in M$ be arbitrary and let $N=S(p)$. By Lemma~\ref{lem:4.2},
$\xi^t$ defines a parallel normal field on $N$ for each fixed $t$
sufficiently close to $0$. Hence
$\xi_p^t=\gamma_p(t)-p\in(\nu_0N)_p$ for all $t$ near $0$. Therefore,
$$
\tfrac{\mathrm d^2\,}{\mathrm d t^2}\big|_{0}(\gamma_p(t)-p)
=
\tfrac{\mathrm d^2\,}{\mathrm d t^2}\big|_{0}\gamma_p(t)
=
\alpha(X_p,X_p)\in(\nu_0N)_p.
$$

Since $\gamma_p'(t)$ has unit length,
$\tfrac{\mathrm d^2\,}{\mathrm d t^2}\big|_{0}\gamma_p(t)\perp\gamma_p'(0)=X_p$.
Therefore,
$$
\alpha(X_p,X_p)\in(\nu_0M)_p
$$ which implies~(1).
Let $\xi\in(\nu_0M)_p$ be arbitrary. Then
$$
\langle \xi,\alpha(X_p,X_p)\rangle
=
\langle A_\xi X_p,X_p\rangle
=
\langle \langle \xi,(\eta _k)_p\rangle X_p,X_p\rangle
=
\langle \xi,(\eta_k)_p\rangle
$$
which proves~(2).
\end{proof}

In the notation of Corollary~\ref{cor:11}, let
$$
\tilde\eta_i=\mu_iX+\eta_i
$$
denote the $\nu_0N$-curvature normals of $N$, where
$i=1,\dots,g$ and $i\neq k$ (here, as before, $X$ stands for $X_{|N}$,
and $\eta_i$ stands for $(\eta_i)_{|N}$).
(Note, in the notation of Corollary~\ref{cor:11}, that 
$\nu_0N = \tilde\nu_0N$).

 Observe that we can only conclude that the curvature normals
$\eta_i$ with respect to $\hat\nu_0N$ have constant length. In general,
the curvature normals $\tilde\eta_i$ associated with $\nu_0N$ need not
have constant length.
\medskip

One has a one-parameter family $\xi^t$ of parallel normal fields on $N$.  Namely,
$$
\xi^t_q=\gamma_q(t)-q,
\qquad -\varepsilon<t<\varepsilon.
$$
Recall that, for each fixed $t$, the parallel manifold $N_{\xi^t}$ in $\mathbb R^{n+p} $ is given by
$$
N_{\xi^t}=S(\gamma_q(t)),
$$
where $q$ is any point of $N$ (see Lemma~\ref{lem:4.2}).

It follows from the tube formula that the $(\nu_0N_{\xi^t})$-curvature normals
$\tilde\eta_i^t$ of $N_{\xi^t}\subset \mathbb R^{n+p} $ are given by
\begin{equation}\label{eq:cur77}
 (\tilde \eta _i^t)_{\gamma _q(t)} := \tfrac{1}{1- \langle  (\tilde \eta _i)_q, \xi ^t_q \rangle} \, (\tilde \eta _i)_q 
 = \tfrac{1}{1- \langle  (\tilde \eta _i)_q, \gamma _q(t) -q \rangle} \, (\tilde \eta _i)_q \qquad (i\neq k).
\end{equation}

As subspaces of $\mathbb R^{n+p} $ one has that  
\begin{equation}\label{eq:cur78}(\nu _0N)_q = (\nu _0N_{\xi ^t})_{q+\xi ^t_q}. 
\end{equation}
Nevertheless, the direct summands in the decomposition
\begin{equation}\label{eq:cur79}
(\nu_0N_{\xi^t})_{q+\xi_q^t}
=
\mathbb RX_{q+\xi_q^t}
\oplus
(\nu_0M)_{q+\xi_q^t}
\end{equation}
vary with $t$, unless $X$ is a constant vector field. In this case, $X$ is a parallel tangent vector field on $M$ that is invariant under the family of shape operators. Hence, $M$ locally splits off a line, contradicting the assumption that $M$ is locally irreducible.
\medskip

Note that
\begin{equation}\label{eq:80}
\pi_2(\tilde\eta_i^t)=(\eta_i)_{|N_{\xi^t}},
\end{equation}
where $\pi_2$ denotes the projection onto the second direct summand. 

Observe that  
\begin{eqnarray}\label{eq:81}\pi _2 ((\tilde\eta _i^t)_{\gamma _q (t)}) &=&
(\tilde\eta _i^t)_{\gamma _q (t)} - \langle (\tilde\eta _i^t)_{\gamma _q (t)}, 
X_{\gamma _q(t)}\rangle X_{\gamma _q(t)}\nonumber \\ 
&=& \tfrac{1}{1- \langle  (\tilde \eta _i)_q, \gamma _q(t) -q \rangle} \, (\tilde \eta _i)_q \nonumber\\
&\ & -\langle \tfrac{1}{1- \langle  (\tilde \eta _i)_q, \gamma _q(t) -q \rangle} \, (\tilde \eta _i)_q , \gamma '_q(t)\rangle \gamma '_q(t) \nonumber \\ 
&=& \tfrac{1}{1- \langle  (\tilde \eta _i)_q, \gamma _q(t) -q \rangle} \, (\tilde \eta _i)_q \nonumber \\
&\ & -\tfrac{1}{1- \langle  (\tilde \eta _i)_q, \gamma _q(t) -q \rangle} \langle \, (\tilde \eta _i)_q , \gamma '_q(t)\rangle \gamma '_q(t).
\end{eqnarray}
Set 
\begin{equation}\label{def:001}
f_{q,i}(t): = 1- \langle  (\tilde \eta _i)_q, \gamma _q(t) -q \rangle
\end{equation}
which implies that 
\begin{equation}\label{eq:002}
f'_{q,i}(t) = - \langle  (\tilde \eta _i)_q, \gamma '_q(t) \rangle
\end{equation}

\medskip

Then \eqref{eq:81} can be written as 
\begin{eqnarray}\label{eq:84}
(\eta _i)_{\gamma _q(t)} &=& \pi _2 ((\tilde\eta _i^t)_{\gamma _q (t)}) \nonumber \\
&=& \tfrac{1}{f_{q,i}(t)} \, (\tilde \eta _i)_q  
+ \tfrac{1}{f_{q,i}(t)} f'_{q,i}(t) \gamma '_q(t).
\end{eqnarray}

The condition that 
\begin{equation}\label{eq:a_i^2}
a_i^2:=\langle (\eta _i) , (\eta _i)\rangle 
\end{equation}
is constant, and in particular along $\gamma _q(t)$, can be written from  \eqref{eq:84} as 
\begin{equation}\label{eq:85}
 \tfrac{1}{f^2_{q,i}(t)} \langle  (\tilde \eta _i)_q, (\tilde \eta _i)_q\rangle   
- \tfrac{1}{f^2_{q,i}(t)} (f'_{q,i}(t))^2 = a_i^2
\end{equation}
Equivalently, 
\begin{equation}\label{eq:86}
a_i^2f_{q,i}^2 + (f'_{q,i})^2 = \langle  (\tilde \eta _i)_q, (\tilde \eta _i)_q\rangle 
= \mu_i^2(q) + a_i^2
\end{equation}
(see Corollary~\ref{cor:11}). 
While we may assume that $a_i\geq 0$, $\mu _i$ may be negative.

A smooth solution to the  differential equation \eqref{eq:86} is equivalent, by differentiating, to a smooth solution 
of
\begin{equation}\label{eq:dereq}
\left\{
\begin{aligned}
f'_{q,i}(a_i^2f_{q,i} + f''_{q,i} ) &= 0,\\
f_{q,i}(0) &= 1,\\
f'_{q,i}(0)  &= -\mu _i(q).
\end{aligned}
\right.
\end{equation}

It is standard to show that a smooth solution $f_{q,i}$ of \eqref{eq:dereq} satisfies either the equation 
$a_i^2f_{q,i} + f''_{q,i} =0$ or is constantly 1. In the latter case, $\mu_i=0$ by (\ref{eq:002}). By explicitly solving the first equation, one obtains if $a_i\neq 0$ that 
\begin{equation}\label{eq:expsol}
     f_{q,i}(t)\ = \cos(a_it) - \tfrac{\mu _i(q)}{a_i}\sin (a_it).
\end{equation}
If $a_i=0$, then $\eta_i=0$. In our notation \eqref{eq:not09} we must have $i=g$. 
Hence, by \eqref{eq:cur79}, for every $q\in M$

\begin{equation}
    (\tilde \eta _g^t)_{\gamma _q(t)} = \tfrac{1}{1- \langle  (\tilde \eta _g)_q, \xi ^t_q \rangle} \, (\tilde \eta _g)_q  
    = \lambda(t)X_{\gamma _q(t)} = \lambda (t)\gamma _q'(t)
\end{equation}
Since $\pi_2((\tilde\eta_g^t)_{\gamma_q(t)})=0$.
Then, if $(\tilde\eta_g)_q\neq 0$, $\gamma_q'(t)$ is a scalar multiple of the
constant vector $(\tilde\eta_g)_q$. Hence,
$$
0=
\tfrac{\mathrm d^2\,}{\mathrm d t^2}\big|_{0}\gamma_p(t)
=
\alpha(X_p,X_p)
=
(\eta _k)_p, 
$$
where the last equality follows from Lemma~\ref{lem:n_k}(2). Since $\eta _k$ has constant length, $\eta _k =0$. This is a contradiction. 
Then, if $a_g=0$, $\tilde\eta_g$ must vanish identically on $N$. Hence, $\mu _g=0$. 
Then $f_{q,g} (0)=1, \,f'_{q,g}(0) = 0$, and $f''_{q,g} = 0$. This implies that $f_{q,g} = 1$ whenever 
$a_g=0$. Thus, we have proved that either $f_{q,i}(t)$ is given by \eqref{eq:expsol}, with $a_i>0$, 
or it is constantly equal to $1$. The latter case occurs if
$\gamma_q(t)-q$ is always perpendicular to $(\tilde\eta_i)_q$.

\begin{remark}\label{rem:800}
Note that $\eta_i=0$ if and only if $a_i=0$. Thus, in our convention, $i=g\neq k$
(the inequality follows from the fact that $\eta_k$ is not parallel). Moreover,
$\eta_g=0$ if and only if $\tilde{\eta}_g=0$ (see the preceding paragraphs).
\end{remark}

\begin{remark}\label{rem:820}
Assume that $f_{q,i}(t)\equiv 1$, for all $q\in N$ and all $i\neq k$. Observe that 
\begin{equation*}\label{eq:cur77h}
 (\tilde \eta _i^t)_{\gamma _q(t)} = \tfrac{1}{1- \langle  (\tilde \eta _i)_q, \gamma _q(t) -q \rangle} \, (\tilde \eta _i)_q = \tfrac{1}{f_{q,i}(t)} \, (\tilde \eta _i)_q = (\tilde \eta _i)_q.
\end{equation*}
Since $\gamma _q(t) -q =\xi ^t_q\in \nu _0N = \nu _0S(\gamma _q(t)),$ 
$\gamma '_q(t)\in \nu _0S(\gamma _q(t))$. Taking into account that 
\begin{equation*}\label{eq:cur77g}-f'_{q,i}(t) = \langle (\tilde \eta _i)_q, \gamma '(t)\rangle =0 =  \langle (\tilde \eta _i^t)_{\gamma _q(t)}, \gamma '_q(t)\rangle
\end{equation*}
one concludes that $A^t_{X_{\gamma _q(t)}} =0$, where $A^t$ is the shape operator of 
the hypersurface $S(\gamma _q(t))$ of $M$. Then $S(\gamma _q(t))$ is totally geodesic. Therefore, $E_k$ and $E_k^\perp$ are two perpendicular autoparallel
distributions. Hence, both must be parallel distributions on $M$. Since
$\alpha(E_k,E_k^\perp)=0$, it follows that $M$ locally splits by Moore's lemma (see \cite{BCO}),
contradicting the assumption that $M$ is locally irreducible at every
point (see Corollary~1.7 of \cite{BCO}). Thus, not all the functions $f_{q,i}(t)$ can be identically equal to $1$.
\end{remark}

\bigskip

\begin{localreduction}[A local reduction]\label{localreduction} 
In what follows, since we work locally, we may assume, after restricting
to a neighborhood of a generic point and perhaps replacing $N$ by a nearby
parallel manifold $\phi_{t_0}(N)$, that for each index $i\neq k$, either all
the functions $f_{q,i}(t)$, $q\in N$, are identically equal to $1$, or all
are given by \eqref{eq:expsol}. Namely, for all $q\in N$,
\begin{equation}\label{eq:repeat}
f_{q,i}(t)=\cos(a_it)-\frac{\mu_i(q)}{a_i}\sin(a_it), \quad a_i>0.
\end{equation}

\smallskip

\begin{definition}\label{def:regular}
    The points of $M$ at which the above local reduction holds are called
\emph{regular}. The set $\Omega$ of regular points is open and dense in $M$.
\end{definition}
\end{localreduction}

\medskip

\subsection{Inner products with the longest non-parallel curvature normal and related constraints} \label{sec:long}

We keep the notation and  assumptions of the previous section.
Let us denote  
\begin{equation}\label{def:5}
\left\{
\begin{aligned}
I &:= \{1,\dots,g\}\setminus\{k\},\\
I_0 &:= \left\{i\in I:
f_{q,i}(t)\equiv 1,\text{ for all }q\in N\right\},\\
I_1 &:= I\setminus I_0.
\end{aligned}
\right.
\end{equation}

\medskip

\begin{definition}\label{def:coupled}
    The curvature normal field $\eta _i$ is said to be \emph{coupled} with $\eta _k$ if $i\in I_1$, and \emph{uncoupled} if $i\in I_0$.
\end{definition}

\medskip

The above  definition is motivated by the following result:  

\begin{lemma}\label{lem:acople}
Under the general assumptions and notation of this section, let $i\in I$.
\begin{enumerate}
    \item If $i\in I_0$, then $\langle \eta_i,\eta_k\rangle=0$.
    \item If $i\in I_1$, then $\langle \eta_i,\eta_k\rangle= \langle \eta_i,\eta_i\rangle= a_i^2\neq 0$.
    \item If $i,j\in I_1$, with $i\neq j$, then $\eta _i$ is not a scalar multiple of $\eta _j$.
\end{enumerate}
\end{lemma}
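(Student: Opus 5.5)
The plan is to compute $\langle\eta_i,\eta_k\rangle$ along the integral curves $\gamma_q(t)$ of $X$, using the explicit expression \eqref{eq:84} for $\eta_i$ together with Lemma~\ref{lem:n_k}. The key observation is that Lemma~\ref{lem:n_k}(2) holds at every point of $M$, not only on $N$. Since $\gamma_q$ is a unit-speed geodesic of $M$ with $\gamma_q'=X$, we get
\[
\gamma_q''(t)=\alpha(X,X)_{\gamma_q(t)}=(\eta_k)_{\gamma_q(t)} .
\]

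\textbf{Step 1.} Take the inner product of \eqref{eq:84} with $\gamma_q''(t)$. Because $|\gamma_q'|=1$, we have $\langle\gamma_q',\gamma_q''\rangle=0$, so the second term of \eqref{eq:84} drops out. This leaves
\[
\langle\eta_i,\eta_k\rangle_{\gamma_q(t)}=\tfrac{1}{f_{q,i}(t)}\,\langle(\tilde\eta_i)_q,\gamma_q''(t)\rangle .
\]
Differentiating \eqref{eq:002} once more gives $f''_{q,i}(t)=-\langle(\tilde\eta_i)_q,\gamma_q''(t)\rangle$, since $(\tilde\eta_i)_q$ is a fixed vector. Hence
\[
\langle\eta_i,\eta_k\rangle_{\gamma_q(t)}=-\frac{f''_{q,i}(t)}{f_{q,i}(t)} ,
\]
which is valid wherever $f_{q,i}\neq0$, in particular for small $t$.

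\textbf{Step 2.} Split according to the local reduction~(\ref{localreduction}).
\begin{itemize}
\item If $i\in I_0$, then $f_{q,i}\equiv1$, so $f''_{q,i}=0$ and $\langle\eta_i,\eta_k\rangle=0$. This is part (1).
\item If $i\in I_1$, then $f_{q,i}$ is given by \eqref{eq:repeat} with $a_i>0$. Hence $f''_{q,i}=-a_i^2f_{q,i}$, and therefore $\langle\eta_i,\eta_k\rangle=a_i^2=\langle\eta_i,\eta_i\rangle\neq0$. This is part (2).
\end{itemize}
Since the points $\gamma_q(t)$, with $q\in N$ and $|t|<\varepsilon$, cover the neighbourhood under consideration, the identities hold there. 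Setting $t=0$ already gives them on $N$, and $N$ may be taken through any regular point.

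\textbf{Step 3.} For part (3), suppose that at some point $\eta_i=c\,\eta_j$ with $i,j\in I_1$ and $i\neq j$. We have $c\neq0$, since $a_i\neq0$. Applying (2) twice gives
\[
a_i^2=\langle\eta_i,\eta_k\rangle=c\,\langle\eta_j,\eta_k\rangle=c\,a_j^2 ,
\qquad
a_i^2=\langle\eta_i,\eta_i\rangle=c^2a_j^2 .
\]
Comparing the two, $c=c^2$, so $c=1$ and $\eta_i=\eta_j$ at that point. This contradicts the fact that distinct eigendistributions have distinct curvature normals.

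The only delicate point I expect is Step 1. There one must make sure that $\gamma_q''=\eta_k$ along the whole curve, which follows from Lemma~\ref{lem:n_k} applied at $\gamma_q(t)$ through the integral manifold $S(\gamma_q(t))$. One must also make sure that $f_{q,i}$ does not vanish in the range considered, which holds for $t$ small.
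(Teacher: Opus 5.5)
Your proposal is correct and follows essentially the paper's own argument. You compute $\langle\eta_i,\eta_k\rangle=-f''_{q,i}/f_{q,i}$ from \eqref{eq:84}, Lemma~\ref{lem:n_k} and $\gamma_q'\perp\gamma_q''$, then use $f_{q,i}\equiv 1$ or \eqref{eq:repeat} for (1) and (2), and the $c=c^2$ argument for (3). The only difference is that you treat $I_0$ and $I_1$ with one unified formula, whereas the paper handles $I_0$ directly via $\tilde\eta_i^t=\tilde\eta_i$; this is cosmetic.
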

\begin{proof}
 Let $x=\phi_{t}q =\gamma _q(t)$, where $q\in N$, and let $i\in I_0$. Then 
\begin{eqnarray*}
0&=&-f''_{q,i}(t) = \langle \tilde \eta _i, \gamma ''_q(t)\rangle = \langle   (\tilde \eta^t _i)_{\gamma _q(t)}, (\eta _k)_{\gamma _q(t)}\rangle \\
&=& \langle   (\tilde \eta^t _i)_x, (\eta _k)_x\rangle = \langle (\eta _i)_x, (\eta _k)_x\rangle
\end{eqnarray*}
by  Lemma \ref{lem:n_k}, equation \eqref{eq:cur77}, and the fact that  $\eta _k\perp X$ (the second derivative is in the ambient Euclidean space). This proves (1).

Assume that $i\in I_1$. By \eqref{eq:84}
\begin{equation*}
(\eta _i)_x = \tfrac{1}{f_{q,i}(t)} \, (\tilde \eta _i)_q  
+ \tfrac{1}{f_{q,i}(t)} f'_{q,i}(t) \gamma '_q(t).
\end{equation*}
(observe that $f_{q,i}(t)\neq 0$, since the parallel manifold $N_{\xi ^t}$ is non-singular).
By Lemma \ref{lem:n_k}, $(\eta _k)_x= \gamma _q''(t)$, since $X_{\gamma _q(t)} = \gamma_q'(t)$. Note that 
$\gamma_q'(t)\perp \gamma _q''(t)$, since $\gamma '(t)$ has constant length. Then 
\begin{eqnarray*}\langle (\eta _i)_x, (\eta _k)_x\rangle &=& \tfrac{1}{f_{q,i}(t)} \langle (\tilde \eta _i)_q, \gamma _q''(t)\rangle = 
-\tfrac{f_{q,i}''}{f_{q,i}(t)}\\
&=& a_i^2
\end{eqnarray*}
since $f_{q,i}$ is given by \eqref{eq:expsol}. This proves (2). If $\eta_j=c\,\eta_i$, then (2) implies that $c^2=c\neq 0$. Hence,
$c=1$ and therefore $\eta_i=\eta_j$, which implies that $i=j$, a
contradiction. Thus, (3) follows.
\end{proof}

\medskip

\begin{lemma} \label{lem:line} We keep the notation and assumptions of this section. Assume that, for some $q\in N$, 
$(\tilde \eta _i)_q$, $(\tilde \eta _j)_q$, and $(\tilde \eta _l)_q$ lie on a line, where $i,j,l\in I$ are all different. Then $i,j,l\in I_0$.
\end{lemma}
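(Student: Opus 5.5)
The plan is to turn the collinearity of $(\tilde\eta_i)_q,(\tilde\eta_j)_q,(\tilde\eta_l)_q$ into a linear relation among the functions $f_{q,i},f_{q,j},f_{q,l}$ of \eqref{def:001}. Then I would use the explicit form \eqref{eq:repeat} (valid at regular points, after the local reduction) to force all three indices into the same "frequency class". Finally, I would exclude the case where that class is a frequency $a>0$ by a sphere argument.

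\emph{Step 1 (the linear relation).} The three indices are distinct, so the curvature normals $\tilde\eta_i,\tilde\eta_j,\tilde\eta_l$ of $N$ at $q$ are pairwise distinct vectors, by Corollary~\ref{cor:11}. After relabelling so that $(\tilde\eta_j)_q$ is the one written in terms of the other two, collinearity gives
\[
(\tilde\eta_j)_q=(1-s)(\tilde\eta_i)_q+s(\tilde\eta_l)_q, \qquad s\notin\{0,1\}.
\]
Pairing with $\gamma_q(t)-q$ and using \eqref{def:001}, we obtain for all small $t$
\[
f_{q,j}(t)=(1-s)f_{q,i}(t)+s\,f_{q,l}(t).
\]
The three coefficients $1-s$, $s$, $-1$ are all non-zero.

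\emph{Step 2 (frequency classes).} Each $f_{q,m}$, $m\in\{i,j,l\}$, falls into one of two cases. Either $f_{q,m}\equiv1$ (when $m\in I_0$), or
\[
f_{q,m}(t)=\cos(a_mt)-\tfrac{\mu_m(q)}{a_m}\sin(a_mt), \qquad a_m>0
\]
(when $m\in I_1$). The functions $1$, $\cos(at)$ for distinct values $a>0$ are linearly independent on any interval. Hence, grouping the three indices into classes ("$I_0$", or "$I_1$ with a given value of $a_m$"), the coefficients within each class must sum to zero. This holds for the constant term, and for the $\cos$ term in each frequency; the $\cos$ coefficient of every $f_{q,m}$ with $m\in I_1$ is $1$. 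Since each coefficient is non-zero, no class can be a singleton. A class of exactly two would need two of $1-s$, $s$, $-1$ to sum to zero, which forces $s\in\{0,1\}$ or $s$ to be a coefficient of zero. So all three indices lie in a single class. Either $i,j,l\in I_0$, which is the claim, or $i,j,l\in I_1$ with $a_i=a_j=a_l=:a>0$.

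\emph{Step 3 (excluding the common frequency case).} This is the step I expect to be the only real obstacle, and I would handle it by projecting to $\nu_0M$. Let $\pi_2$ be the orthogonal projection of $(\nu_0N)_q=\mathbb RX_q\oplus(\nu_0M)_q$ onto the second summand. It is linear, so it maps the three collinear points to the three collinear points $(\eta_i)_q,(\eta_j)_q,(\eta_l)_q$ (by \eqref{eq:rst2}). These are distinct curvature normals of $M$ at $q$, so they are pairwise distinct vectors. In the remaining case they all have the same length $a$, so they lie on the sphere of radius $a$ in $(\nu_0M)_q$. A line meets a sphere in at most two points, which is a contradiction. Hence $i,j,l\in I_0$.

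The points to double-check are the following. The relation in Step 1 must hold identically in $t$, which it does because $(\tilde\eta_m)_q$ is a fixed vector. The local reduction ensures that each $f_{q,m}$ has exactly one of the two forms. Finally, the cancellation bookkeeping in Step 2 must also respect the $\sin$ terms. Those only add constraints, so they cannot invalidate the argument.
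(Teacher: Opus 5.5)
Your proof is correct and takes essentially the same route as the paper. You write one curvature normal as an affine combination of the other two, match frequencies through the linear independence of $1$ and the $\cos(at)$, and then project by $\pi_2$ to get three distinct collinear curvature normals of $M$ of equal length. Your class-sum bookkeeping in Step~2 simply treats the mixed $I_0/I_1$ cases and the all-$I_1$ case uniformly; the paper instead uses a short even-part/period argument for the mixed case and calls the all-trigonometric case ``standard''.
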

\begin{proof}
Assume that one of the indices, say $l$, does not belong to $I_0$. Equivalently, by (\ref{def:5}) and (\ref{eq:repeat}),
\begin{equation*}
f_{q,l}(t)=\cos(a_lt)-\frac{\mu_l(q)}{a_l}\sin(a_lt).
\end{equation*}

Since the three curvature normals are all different and lie on a line, there
exist $a,b\in\mathbb R$, with $a+b=1$ and $a\neq 1\neq b$, such that
\begin{equation}\label{eq:lemma7a}
(\tilde\eta_l)_q = a(\tilde\eta_i)_q+b(\tilde\eta_j)_q.
\end{equation}
This implies that
\begin{equation}\label{eq:90x}
f_{q,l}=af_{q,i}+bf_{q,j}.
\end{equation}

If $f_{q,i}=1$, then
\begin{equation}\label{eq:90y}
f_{q,l}(t)= a + bf_{q,j}(t)
\end{equation}
Hence, $f_{q,j}$ cannot be constantly $1$ and hence 
\begin{equation*}
f_{q,j}(t)=\cos(a_jt)-\frac{\mu_j(q)}{a_j}\sin(a_jt).
\end{equation*}
By taking even parts of the functions in (\ref{eq:90y}),  we obtain 
$$\cos(a_lt) = a + b\cos(a_jt).$$
Observe that the period of the left hand function is $\frac{2\pi}{a_l}$, while the period of the second hand function is $\frac{2\pi}{a_j}$. Then $a_l=a_j$ which implies that 
$$(1-b)\cos(a_lt) = a.$$ A contradiction.
 We also obtain a contradiction if $f_{q,j}=1$. Therefore,
\begin{eqnarray*}
\cos(a_lt)-\frac{\mu_l(q)}{a_l}\sin(a_lt)
&=&a\left(\cos(a_it)-\frac{\mu_i(q)}{a_i}\sin(a_it)\right)\\
&+&b\left(\cos(a_jt)-\frac{\mu_j(q)}{a_j}\sin(a_jt)\right).
\end{eqnarray*}
It is standard to show that the above functional equality, since $a\neq 0 \neq b$, implies that
$a_i=a_j=a_l$. This implies that
\begin{equation}\label{eq:lemma7b}
\Vert (\eta_i)_q\Vert=\Vert(\eta_j)_q\Vert=\Vert(\eta_l)_q\Vert.
\end{equation}
Applying $\pi_2$ to (\ref{eq:lemma7a}), we obtain
\begin{equation}\label{eq:lemma7c}
(\eta_l)_q = a(\eta_i)_q + b(\eta_j)_q.
\end{equation}
Thus, $(\eta_i)_q$, $(\eta_j)_q$, and $(\eta_l)_q$ lie on a line. But three
different points on a line cannot have the same length, contradicting
(\ref{eq:lemma7b}). This contradiction proves the lemma.
\end{proof}

\begin{lemma}\label{lem:I_0}
We keep the notation and assumptions of this section.
Let $N'$ be a local irreducible extrinsic factor of $N$, and let
$J\subset I$ be defined by the property that  $j\in J$ if and only if
$\tilde\eta_{j|N'}$ is a curvature normal of $N$ corresponding to $N'$.
Then $J\cap I_1$ is non-empty.
\end{lemma}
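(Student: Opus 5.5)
We will argue by contradiction, assuming $J\subset I_0$. The aim is to show that the distribution $D:=\bigoplus_{j\in J}E_j$ on $M$ is parallel, so that $M$ splits locally by Moore's lemma, contradicting local irreducibility.

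\emph{First step: pointwise consequences of $J\subset I_0$.} For $j\in J$ we have $f_{q,j}\equiv 1$ for every $q\in N$. By \eqref{eq:002}, $\langle(\tilde\eta_j)_q,\gamma_q'(t)\rangle=0$ for all $t$, and at $t=0$ this gives $\mu_j=0$. Hence $\tilde\eta_j=\eta_{j|N}$ by \eqref{eq:rst2}, and $A_X$ vanishes on $TN'=\bigoplus_{j\in J}E_{j|N}$ by Lemma~\ref{lem:4.1+}(3). Integrating $\langle\tilde\eta_j,\gamma_q'\rangle=0$ from $0$ gives $\langle(\tilde\eta_j)_q,\xi^t_q\rangle=0$, so
\[
A_{\xi^t}|_{TN'}=0 .
\]
By \eqref{eq:cur77}, the curvature normals $\tilde\eta^t_j$ of $N_{\xi^t}$ at $\gamma_q(t)$ therefore coincide with $(\tilde\eta_j)_q$.

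\emph{Second step: $D$ is parallel in the direction of $X$.} The differential of the parallel map $x\mapsto x+\xi^t_x$, which is $\phi_t|_N$, equals $\operatorname{Id}-A_{\xi^t}$. This differential restricts to the identity on $TN'$, and it maps $E_j(q)$ onto $E_j(\gamma_q(t))$. Hence, as subspaces of $\mathbb R^{n+p}$, $D_{\gamma_q(t)}=D_q$ for all $t$. Consequently $D$ is constant along the integral curves of $X$ in the ambient space, and in particular $\nabla_XD\subset D$. In addition, $\nabla_vX=0$ for $v\in D$: indeed $X$ is a parallel normal field of each leaf (Lemma~\ref{lem:4.1+}(1)), and the shape operator $A_X$ of each leaf vanishes on $D$, by the first step applied to the leaf $S(\gamma_q(t))$ with $\mu^t_j=\langle\tilde\eta^t_j,X\rangle=0$. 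Combined with $\alpha(E_k,D)=0$, this shows that $D^\perp\supset E_k$ is preserved by $\nabla_D$ in the $X$-component.

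\emph{Third step: $D$ is parallel within each leaf.} On each leaf $S(\gamma_q(t))=N_{\xi^t}$, the distribution $D$ is the tangent distribution of the extrinsic factor $N'$, transported by the parallel map. Since that map is the identity on $TN'$, the leaf is locally $(N'+c)\times N''_{\xi''}$, where $\xi''$ is the component of $\xi^t$ orthogonal to the span of the $N'$-factor. Hence $D$ is parallel in each leaf (in its induced connection). This is the point I expect to be the main obstacle: one must check that the component of $\xi^t$ along the affine span of $N'$ is a constant translation. My first attempt will use that $\langle\eta_j,\xi^t\rangle=0$ for $j\in J$ and that $\xi^t$ is parallel, so its $N'$-part is a parallel normal field of $N'$ with zero shape operator. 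Therefore that part is constant.

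\emph{Conclusion.} Combining the second and third steps, $D$ is a parallel distribution on $M$; note also that $D\neq TM$, since $E_k\perp D$. By \eqref{eq:-1}, $\alpha(D,D^\perp)=0$. Moore's lemma then gives a local splitting of $M$, contradicting local irreducibility (Corollary~1.7 of \cite{BCO}). Therefore $J\cap I_1\neq\emptyset$.
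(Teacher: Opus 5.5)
Your proof is correct. The overall framing differs from the paper's, but your third step is the paper's argument.

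\textbf{The paper's proof.} From $f_{q,j}\equiv 1$ for $j\in J$ one gets $\langle\tilde\eta_j,\xi^t\rangle=0$, hence $A_{\xi^t}|_{TN'}=0$. So the ambient derivative of the parallel field $\xi^t$ along any $v\in TN'$, namely $-A_{\xi^t}v+\nabla^\perp_v\xi^t$, vanishes, and $\xi^t$ is constant along $N'$. Writing $N=N'\times N''$, each parallel manifold $N_{\xi^t}$ is $N'$, up to a translation normal to the span of $N'$ (zero if $N'$ is taken full in its factor), times a parallel manifold of $N''$. The description $M=\bigcup_t N_{\xi^t}$ then gives $M=N'\times\bigcup_t (N'')_{\xi^t}$ locally, which contradicts irreducibility. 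So once you have that the $N'$-part of $\xi^t$ is constant, your second step and the appeal to Moore's lemma are unnecessary.

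\textbf{What your route clarifies.} Your infinitesimal version shows where $J\subset I_0$ is actually used. Leafwise parallelism of $D$ holds for any parallel normal field. A parallel normal field of an extrinsic product $N'\times N''$ splits as $(\xi',\xi'')$ with each component parallel on its own factor, so parallel manifolds of products are products. Likewise, $D$ is constant along the $X$-curves with no hypothesis, because $\mathrm d\phi_t=\mathrm{Id}-A_{\xi^t}$ is scalar on every $\bar E_j$, $j\neq k$. Thus the step you flagged as the main obstacle is not one. The hypothesis enters only through $\mu^t_j=0$, i.e.\ $A_X|_D=0$ on every leaf. This is the same content as the paper's $A_{\xi^t}|_{TN'}=0$ for all $t$.

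\textbf{One bookkeeping point.} You must kill the $X$-component of $\nabla_Z w$ for every $w\in\Gamma(D)$ and every $Z$ tangent to the leaf, not only $Z\in D$. The fact $\nabla_vX=0$ for $v\in D$ does not cover $Z\in D^\perp\cap E_k^\perp$. What does cover it is
\[
\langle\nabla_Z w,X\rangle=-\langle w,\nabla_ZX\rangle=\langle A_XZ,w\rangle=\langle Z,A_Xw\rangle=0,
\]
by the symmetry of $A_X$. The case $Z=X$ follows from $\nabla_XX=0$. The identity $\alpha(E_k,D)=0$ plays no role there; it enters only in Moore's lemma, via \eqref{eq:-1}.
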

\begin{proof}
Assume, to the contrary, that this intersection is empty and thus $J\subset I_0$.
Hence, $f_{q,j}(t)\equiv 1$ for all $j\in J$ and $q\in N'$.
Thus, $\gamma_q(t)-q$ is a curve which is perpendicular to any curvature normal associated to $N'$.
Let us write, extrinsically,
$$N=N'\times N''\qquad \quad \text{(locally)}.$$
Then, for every (small) $t$, the parallel normal field $\xi^t_{|N'}$ of $N'$ satisfies
$$A'_{\xi^t}=0,$$
where $A'$ is the shape operator of $N'$ (see (\ref{eq:campo})). Then $\xi^t$ is constant along $N'$.
Observe that
$$
M=\bigcup_{t\in(-\varepsilon,\varepsilon)}N_{\xi^t}\qquad \quad  \ \text{(locally)}.
$$
Then, since $\xi^t$ is constant along $N'$,
$$
M=N'\times\bigcup_{t\in(-\varepsilon,\varepsilon)}(N'')_{\xi^t}\quad\text{(locally)},
$$
contradicting the local irreducibility of $M$.
\end{proof}
\medskip

\medskip

\begin{corollary}\label{cor:exFactor}
Let $N'$ be an extrinsic local factor of $N$ which has constant principal
curvatures. Then $\rank(N')=1$, where the rank is with respect to the
affine subspace spanned by $N'$.
\end{corollary}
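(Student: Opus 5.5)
The plan is to argue by contradiction: suppose $N'$ is an extrinsic local factor of $N$ with constant principal curvatures and $\rank(N')\geq 2$. Write $N=N'\times N''$ locally, regarded extrinsically in $\mathbb A'\times\mathbb A''$. The $\nu_0N$-curvature normals $\tilde\eta_j$, $j\in J$, associated with $N'$ are then curvature normals of $N'$ with respect to $\nu_0N'$ in $\mathbb A'$, up to the constant translation coming from the splitting. Since $N'$ has constant principal curvatures, Appendix~\ref{App:A} gives the following. Its $\nu_0N'$-curvature normals are parallel, and $N'$ lies in a sphere of $\mathbb A'$. When $\rank N'\geq 2$, a generic parallel section of $\nu_0N'$ is a non-umbilical isoparametric section. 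The isoparametric rank theorem (Theorem~\ref{thm:iso-rank}) then shows that $N'$ is an isoparametric hypersurface of a sphere or an orbit of an $s$-representation of rank at least $2$. In either case the curvature normals of $N'$ form a "root system"-like configuration, and I expect this to supply three distinct curvature normals lying on a line. For an isoparametric submanifold of rank $\geq 2$, the reflection in a curvature normal permutes the curvature normals, so any two non-orthogonal ones generate a dihedral configuration containing three collinear points. For the higher-rank but non-flat-normal-bundle case, the Appendix facts about the $\nu_0$ part should give the same conclusion.

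With three collinear curvature normals $\tilde\eta_i,\tilde\eta_j,\tilde\eta_l$ of $N'$, all with indices in $J$, Lemma~\ref{lem:line} gives $i,j,l\in I_0$. I would then aim to show that in fact every index of $J$ lies in $I_0$, which contradicts Lemma~\ref{lem:I_0}. For this, use irreducibility of $N'$: the graph on $J$ connecting indices whose curvature normals are non-orthogonal (equivalently, which lie on a common collinear triple) is connected. So every index $m\in J$ belongs to some collinear triple with other elements of $J$, and Lemma~\ref{lem:line} forces $m\in I_0$. Hence $J\subset I_0$, contradicting Lemma~\ref{lem:I_0}. We conclude that $\rank N'=1$.

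The main obstacle is the combinatorial step: showing that, for a constant-principal-curvature factor of rank $\ge 2$, \emph{every} curvature normal lies on a line through two others. For isoparametric submanifolds with $\dim\geq 2$ and Coxeter group, I would first try the following. Irreducibility means the Coxeter group is irreducible, so each root $\eta_m$ is non-orthogonal to some other root $\eta_n$. The reflection in the hyperplane orthogonal to $\eta_n$ maps $\eta_m$ to another curvature normal. Together with the affine structure, since the curvature normals lie on the focal-type configuration $\eta/|\eta|^2$, the three normals involved become collinear, as in the standard fact that the focal points of a rank-two isoparametric family lie on lines. If collinearity fails in a direct form, I would instead use the Appendix description of the higher-rank $\nu_0$ case to get the analogous line property.
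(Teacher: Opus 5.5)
Your skeleton is the paper's: use Lemma~\ref{lem:I_0} to get a coupled index in $J$, produce three collinear curvature normals of $N'$, and apply Lemma~\ref{lem:line}. The gap is in the step you yourself flag as the main obstacle.

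Your reflection argument works only when $N'$ has flat normal bundle, i.e.\ when it is isoparametric (including irreducible isoparametric hypersurfaces of spheres). There the Coxeter group permutes the lines $\mathbb R\tilde\eta_j$, so a non-orthogonal pair yields a third normal in the same plane. The relation $\langle\vec r,\tilde\eta_j\rangle=1$ then makes the three collinear.

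The corollary, however, must also cover factors with constant principal curvatures whose normal bundle is not flat. These are orbits of $s$-representations through non-principal points with $\dim\nu_0N'\geq2$, which is exactly what Theorem~\ref{thm:iso-rank} can produce in the proof of Lemma~\ref{lem:Main}. For such orbits there is no Weyl group acting on $\nu_0N'$, so the root-system picture is unavailable. Saying ``the Appendix facts should give the same conclusion'' leaves the key step unproved.

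The missing idea is Lemma~\ref{lem:rank2} together with Corollary~\ref{cor:cpc}, which avoid reflections entirely:
\begin{enumerate}
\item Suppose $\mathbb V_{i,j}=\mathbb R\eta_i\oplus\mathbb R\eta_j$ contains no third curvature normal. Then the totally geodesic leaf of $E_i\oplus E_j=\ker A_\zeta$, with $\zeta$ a generic parallel section of $\mathbb V_{i,j}^\perp$, splits extrinsically as $S_i\times S_j$ by Moore's lemma.
\item By Lemma~\ref{lem:398}(1), $\eta_i=H_i/\dim E_i$ with $H_i\in\nu_0$; this uses $K_p$-invariance of the mean curvature of the leaf. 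Since the two factors lie in orthogonal affine subspaces, $H_i\perp H_j$, hence $\eta_i\perp\eta_j$.
\item Irreducibility forbids $\eta_i$ from being orthogonal to all other curvature normals, so $|D_{i,j}|\geq3$ for some $j$.
\item Remark~\ref{rem:A4}, via $A_{\vec r}=\mathrm{Id}$, makes all normals in $\mathbb V_{i,j}$ collinear.
\end{enumerate}

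With this in hand, you do not need to show that every index of $J$ lies in $I_0$, nor your connectivity argument. Its ``equivalently'' is also false: in a $B_2$ configuration, orthogonal normals lie on the common line. Simply take $j\in J\cap I_1$ from Lemma~\ref{lem:I_0} and apply Corollary~\ref{cor:cpc} and Remark~\ref{rem:A4} to $j$. Lemma~\ref{lem:line} then gives $j\in I_0$, a contradiction.
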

\begin{proof}
The proof follows from Lemma~\ref{lem:I_0}, Corollary~\ref{cor:cpc},
Remark~\ref{rem:A4}, and Lemma~\ref{lem:line}.
\end{proof}

\medskip

A parallel normal field $\xi$ on a Euclidean submanifold $M$ is called a \textit{parallel normal  isoparametric  section} if the shape operator $A_\xi$ has constant eigenvalues. 
\medskip

The following local result follows immediately from
Theorems~4.5.10 and~4.5.2 and Corollary~4.5.3 of \cite{BCO}.
A global version can be found in Section~4.5.2 of the same reference,
but we will not make use of it.

\begin{theorem}[{isoparametric rank theorem} \cite{BCO}] \label{thm:iso-rank}
Let $M$ be a locally irreducible Euclidean submanifold. Assume that $M$
admits a parallel normal isoparametric  section $\xi\neq 0$.
Then
\begin{enumerate}
    \item $M$ is contained in a sphere.
    \item If $\xi$ is non-umbilical, then $M$ is a submanifold with constant principal curvatures of rank at least $2$ (and so $M$ is either an isoparametric
    hypersurface of a sphere or an (open subset of an) orbit of an $s$-representation).
\end{enumerate}
\end{theorem}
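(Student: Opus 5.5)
The plan is to deduce both assertions from the local theory of parallel isoparametric normal sections in \cite{BCO}, splitting according to the eigenvalue structure of $A_\xi$. Let $\lambda_1,\dots,\lambda_m$ be the constant distinct eigenvalues of $A_\xi$, with eigendistributions $D_1,\dots,D_m$. These have constant rank and are therefore smooth.

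\emph{Umbilical case} ($m=1$, $A_\xi=\lambda\operatorname{Id}$). If $\lambda\neq0$, the map $x\mapsto x+\lambda^{-1}\xi_x$ has differential $\operatorname{Id}-\lambda^{-1}A_\xi=0$, because $\xi$ is parallel. So the map is constant, say equal to $c$, and $M$ lies in the sphere of radius $\|\xi\|/|\lambda|$ centred at $c$. If $\lambda=0$, then $A_\xi=0$ and $\nabla^\perp\xi=0$ force $\xi$ to be a constant vector of $\mathbb R^{n+p}$. Hence $M$ lies in an affine hyperplane and reduces codimension. This case is excluded under the standing fullness assumption, and I would say so explicitly when invoking the theorem.

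\emph{Non-umbilical case} ($m\geq2$). First, the Codazzi identity for $A_\xi$, together with the constancy of the $\lambda_i$ and $\nabla^\perp\xi=0$, shows that each $D_i$ is autoparallel. Next, for every $\lambda_i\neq0$ the focal map $x\mapsto x+\lambda_i^{-1}\xi_x$ has kernel exactly $D_i$. Its fibres are therefore integral manifolds of $D_i$, and each fibre lies in a sphere centred at its focal point.

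The remaining step, and the main obstacle, is to pass from this focal structure to constant principal curvatures for all of $M$, and not only for the single direction $\xi$. For this I would quote the local form of the rank theorem, namely Theorem~4.5.2 and Corollary~4.5.3 of \cite{BCO}. The idea behind these results is as follows. One considers the parallel manifolds $M_{t\xi}$ and the focal manifolds obtained by collapsing the $D_i$. The normal holonomy of a focal manifold acts on the normal spaces, and its orbits reproduce the fibres. Local irreducibility then forces the normal holonomy to act as an $s$-representation, unless $M$ is a hypersurface of a sphere. In either case $M$ has constant principal curvatures, and it is an isoparametric hypersurface of a sphere or an open subset of an orbit of an $s$-representation.

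Part (1) then follows in the non-umbilical case as well. A submanifold with constant principal curvatures that is not contained in an affine subspace lies in a sphere: this is Theorem~4.5.10 of \cite{BCO}, or one can see it directly from the orbit description.

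For the rank statement in (2), I would argue as follows. Let $c$ be the centre of the sphere. The position field $\vec p=x-c$ is then a parallel umbilical section of $\nu_0M$. The section $\xi$ is parallel, lies in $\nu_0M$ (because it is isoparametric and $M$ has constant principal curvatures), and is non-umbilical. Hence $\xi$ is not a multiple of $\vec p$, and $\dim\nu_0M\geq2$, i.e.\ $\rank M\geq2$.
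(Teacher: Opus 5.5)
Your proposal is correct and follows the same route as the paper, which simply derives the statement from Theorems~4.5.10 and~4.5.2 and Corollary~4.5.3 of \cite{BCO}. Your elementary treatment of the umbilical case and your derivation of $\rank M\geq 2$ from the independent parallel sections $\vec p$ and $\xi$ of $\nu_0M$ are sound additions, and you are right that part (1) needs fullness (or $A_\xi\neq 0$): a planar non-circular curve in $\mathbb R^3$ with $\xi=e_3$ satisfies the hypotheses as literally stated but lies in no sphere, so the theorem must be read under the paper's standing assumption that $M$ is full.
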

\medskip

Lemma~\ref{lem:acople}, together with Lemmas~\ref{lem:2.1}(1)
and~\ref{lem:4.1}(1), implies the following corollary.

\begin{corollary}\label{cor:uw8}
The restriction of $\eta_k$ to $N$ is a parallel normal isoparametric 
section. Namely,
\begin{equation*}
\bar A_{\eta_k|\bar E_j}=a_j^2\,\mathrm{Id}_{\bar E_j}
\text{ if } j\in I_1;\qquad
\bar A_{\eta_k|\bar E_j}=0
\text{ if } j\in I_0,
\end{equation*}
where $\bar A$ is the shape operator of $N$ and $\bar E_j=E_{j|N}$.
\end{corollary}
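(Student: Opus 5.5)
We need to show two things about the restriction of $\eta_k$ to $N$: that it is a parallel normal section of $N$, and that its shape operator on each $\bar E_j$ is the stated multiple of the identity.

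\emph{Parallelism.} By Lemma~\ref{lem:4.1}(1), $E_k$ is autoparallel. Lemma~\ref{lem:2.1}(1) then gives $\nabla^\perp_Z\eta_k=0$ in $\nu M$ for all $Z\in E_k^\perp=TN$.

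We next pass to the normal connection of $N$. The normal space of $N$ is $\nu_qN=\mathbb R X_q\oplus\nu_qM$. For $Z\in TN$, the ambient derivative of $\eta_k$ splits as
\[
-A_{\eta_k}Z+\nabla^\perp_Z\eta_k .
\]
The term $\nabla^\perp_Z\eta_k$ vanishes. The term $A_{\eta_k}Z$ lies in $TN$, because $TN$ is invariant under the shape operators of $M$ (Lemma~\ref{lem:4.1+}(4)). Hence the ambient derivative is tangent to $N$, and $\eta_{k|N}$ is parallel in $\nu N$.

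\emph{Shape operator.} For the same reason, the shape operator of $N$ in the direction $\eta_k$ is $\bar A_{\eta_k}=(A_{\eta_k})_{|TN}$. On $E_j$ this operator acts as $\langle\eta_k,\eta_j\rangle\,\mathrm{Id}$. Lemma~\ref{lem:acople} gives the value of this inner product:
\begin{itemize}
\item[] for $j\in I_1$, $\langle\eta_k,\eta_j\rangle=a_j^2$;
\item[] for $j\in I_0$, $\langle\eta_k,\eta_j\rangle=0$.
\end{itemize}
These numbers are constant, so $\eta_{k|N}$ is a parallel normal isoparametric section. It is nonzero because $\|\eta_k\|$ is a nonzero constant.

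The only point requiring care is the identification of $\bar A_{\eta_k}$ with the restriction of $A_{\eta_k}$ to $TN$. This rests entirely on the invariance of $TN$ under the shape operators of $M$, i.e. on $\alpha(E_k,E_k^\perp)=0$.
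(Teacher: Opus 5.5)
Your proof is correct and follows the paper's route. The paper just cites Lemma~\ref{lem:4.1}(1) together with Lemma~\ref{lem:2.1}(1) for the parallelism of $\eta_k$ along $E_k^\perp=TN$, and Lemma~\ref{lem:acople} for the eigenvalues. Your extra step, using the invariance of $TN$ under $A_{\eta_k}$ to pass from parallelism in $\nu M$ to parallelism in $\nu N$ and to identify $\bar A_{\eta_k}$ with $(A_{\eta_k})_{|TN}$, is exactly the detail the paper leaves implicit.
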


\smallskip

\begin{lemma}\label{lem:Main}
Let $N'$ be an irreducible extrinsic local factor of $N$. Let
$\bar\eta=(\eta_k)_{|N'}$, and let $\bar A$ denote the shape operator of
$N'$. Then
\begin{enumerate}
    \item $\bar A_{\bar\eta}$ is a non-zero scalar multiple of the
    identity of $TN'$ (perhaps after replacing $N$ by a nearby integral manifold).
    
    \item $N'$ is contained in a sphere.
    
    \item $\nu_0N'$ has only one associated curvature normal, or
    equivalently, $TN'$ is the only $\nu_0N'$-eigendistribution. 

    \item The curvature normal, associated with $N'$, has constant length, when projected to the parallel codimension one subbundle 
    $(X^\perp)_{|N'} = (\nu _0M)_{|N'}$ of $\nu _0N'$ (in the full ambient space).

    \item If $\dim N'\geq2$, then $\dim\nu_0N'=1$ (here $\nu_0N'$
    is regarded in the affine space spanned by $N'$). 
    
    \item $N$ is contained in a sphere.
\end{enumerate}
\end{lemma}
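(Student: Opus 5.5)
The plan is to restrict the parallel isoparametric section of Corollary~\ref{cor:uw8} to the factor $N'$ and apply the isoparametric rank theorem (Theorem~\ref{thm:iso-rank}) there.

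Write $N=N'\times N''$ extrinsically (locally), with $N'\subset\mathbb A'$, and decompose the normal data accordingly. The restriction $\bar\eta=(\eta_k)_{|N'}$, projected to the normal space of $N'$ in $\mathbb A'$ (plus a constant component), is a parallel normal section of $N'$. By Corollary~\ref{cor:uw8}, its shape operator on $TN'=\bigoplus_{j\in J}\bar E_j$ has constant eigenvalues: $a_j^2$ for $j\in J\cap I_1$ and $0$ for $j\in J\cap I_0$. Hence $\bar\eta$ is a parallel isoparametric section of the irreducible $N'$.

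For (1) I split into cases. If $\bar A_{\bar\eta}$ is not umbilical, Theorem~\ref{thm:iso-rank}(2) says that $N'$ has constant principal curvatures and rank at least $2$. Corollary~\ref{cor:exFactor} then forces rank one, a contradiction. So $\bar A_{\bar\eta}=c\,\mathrm{Id}$. By Lemma~\ref{lem:I_0}, $J\cap I_1\neq\emptyset$, so $c=a_j^2>0$. Then $J\subset I_1$ and all $a_j$ with $j\in J$ coincide. The possible need to replace $N$ by a nearby parallel manifold is exactly the regularity from the local reduction~(\ref{localreduction}). Part (2) follows from Theorem~\ref{thm:iso-rank}(1), or directly: $x\mapsto x+c^{-1}\bar\eta_x$ is constant on $N'$.

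For (3) I would show $|J|=1$. The curvature normals $\tilde\eta_j$, $j\in J$, of $N'$ all satisfy $\langle\tilde\eta_j,\bar\eta\rangle=a^2$, so they lie on an affine hyperplane. The plan is to show they actually lie on a line and then use Lemma~\ref{lem:line}, which, because $J\subset I_1$, forbids three of them on a line.

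\textbf{This is the main obstacle.} Lemma~\ref{lem:line} alone only rules out three collinear normals. Two approaches:
\begin{itemize}
\item If $|J|\ge2$, apply Remark~\ref{rem:equalL} to $N'$ with the parallel flat subbundle spanned by the projections of the $\tilde\eta_j$, whose lengths are all equal. By (4) the $(\eta_j)$-components have the common constant length $a$, and the $\mu_j$ must be handled. This makes every eigendistribution of $N'$ autoparallel; if some normal is non-parallel, $N'$ splits by Lemma~\ref{lem:aux9}, contradicting irreducibility. Otherwise $N'$ has constant principal curvatures, and Corollary~\ref{cor:exFactor} gives rank one, so only one curvature normal and $|J|=1$.
\item Alternatively, use Lemma~\ref{lem:acople}(3) together with Corollary~\ref{cor:cpc} and Remark~\ref{rem:A4} from the appendix.
\end{itemize}

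For (4): with $J=\{j\}$, the projection of the curvature normal onto $(\nu_0M)_{|N'}$ is $\eta_j$, which has constant length $a_j$ by the general assumptions.

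For (5): with a single curvature normal and $\dim N'\ge2$, $TN'$ is autoparallel as an eigendistribution, so by Lemma~\ref{lem:2.1}(2) the curvature normal is parallel. By Remark~\ref{rem:232} it is then $-\vec p/|\vec p|^2$. Any parallel section of $\nu_0N'$ orthogonal to it has zero shape operator, which contradicts fullness in $\mathbb A'$ as in Remark~\ref{rem:equalL2}. Hence $\dim\nu_0N'=1$.

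For (6): the section $\bar\eta$ is umbilical with the same constant $a^2$ on every factor? Not necessarily the same constant, so instead apply Theorem~\ref{thm:iso-rank}(1) to the nonzero parallel isoparametric section $\eta_k|_N$ on each factor. Each factor lies in a sphere, and a product of spherical submanifolds lies in a sphere, giving (6).
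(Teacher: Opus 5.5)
Your proposal is correct and follows the paper's proof essentially step by step. The common steps are: umbilicity of $\bar A_{\bar\eta}$ from the isoparametric rank theorem together with Corollary~\ref{cor:exFactor}; then $c\neq 0$, where you invoke Lemma~\ref{lem:I_0} directly while the paper reruns the same splitting argument; then part (3) via Remark~\ref{rem:equalL}. For that last step, apply the remark to the parallel flat subbundle $(\nu_0M)_{|N'}$ itself rather than to a span of projections; its curvature normals are the $\eta_j$, $j\in J$, all of length $\sqrt{c}$ since $\langle\eta_k,\tilde\eta_j\rangle=a_j^2=c$, so the $\mu_j$ never enter and the collinearity detour via Lemma~\ref{lem:line} is unnecessary.
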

\begin{proof} By Corollary~\ref{cor:uw8}, $\bar\eta$ is a parallel isoparametric
normal section of $N'$, and hence the eigenvalues of $\bar A_{\bar\eta}$
are constant.
If $\bar A_{\bar\eta}$ has at least two distinct eigenvalues, then, by
Theorem~\ref{thm:iso-rank}, $N'$ is a submanifold with constant principal
curvatures and rank at least $2$. This contradicts
Corollary~\ref{cor:exFactor}. Therefore, $\bar A_{\bar\eta}$ has only one
eigenvalue, and hence
\(
\bar A_{\bar\eta}=c\,\mathrm{Id}.
\)
Let us show that $c\neq0$ (perhaps by considering a nearby integral
manifold of $X^\perp$). Let us decompose extrinsically
$N=N'\times N''$. If $N$ is not full, as in the local example in
\cite{DO}, then $N'' $, which may be a point, is not full in its ambient
space. Locally, $M$ is a union of parallel manifolds. Namely,
\[
M=
\bigcup_{t\in(-\epsilon,\epsilon)}N_{\xi^t}
\qquad\text{(locally)}.
\]
If $c=0$, then $\eta_k$ is perpendicular to any curvature normal of $N$
corresponding to $N'$. Then
\[
M=
N'\times
\bigcup_{t\in(-\epsilon,\epsilon)}(N'')_{\xi^t}
\qquad\text{(locally)},
\]
and thus $M$ is reducible, a contradiction. Thus $c\neq0$, which proves (1). Note that 
\(
z=q+c^{-1}\bar\eta_q
\)
does not depend on $q\in N'$. Hence $N'$ is contained in the sphere
with centre $z$ and radius $\|c^{-1}\bar\eta\|$. This proves (2).
If $\tilde\eta_j$ is any  curvature normal of $N'$, then 
$$c = \langle \bar\eta , \tilde\eta _j\rangle = \langle \eta _k , \tilde\eta _j\rangle 
 = \langle \eta _k , \eta _j\rangle  = \langle \eta _j, \eta _j\rangle ,$$
 where the last equality is due to Lemma \ref{lem:acople}. Then all the curvature normals of $N'$, with respect to the parallel
and flat subbundle $\nu_0M_{|N'}$ of $\nu_0N'$, have the same length.
By Remark~\ref{rem:equalL}, since $N'
$ is locally irreducible, either
there is only one eigendistribution, or $N'$ is a submanifold with
constant principal curvatures and rank at least $2$. The latter
possibility is excluded by Corollary~\ref{cor:exFactor}. This proves (3). Part (4) follows from Corollary~\ref{cor:11}.
It follows from Remark~\ref{rem:equalL2} that (5) holds.

Since every extrinsic factor of $N$ is contained in a sphere, $N$ is
contained in a sphere, and hence (6) follows.
\end{proof}
\begin{corollary}\label{cor:Main}
Assume that $M$ has flat normal bundle and let $N_1$ be an irreducible
extrinsic local factor of $N$ of dimension at least $2$. Then $N_1$ is an
extrinsic sphere in the ambient Euclidean space.
\end{corollary}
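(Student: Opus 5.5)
The plan is to combine Lemma~\ref{lem:Main} with the extra rigidity that flatness of $\nu M$ gives. Let $N_1$ be an irreducible extrinsic local factor of $N$ with $\dim N_1\geq 2$. By Lemma~\ref{lem:Main}(2) and (5), $N_1$ lies in a sphere of its affine span $\mathbb A_1$, and $\nu_0N_1$, computed in $\mathbb A_1$, is one-dimensional, spanned by the position vector $\vec p$ relative to the centre. What remains is to show that $\nu N_1$ (in $\mathbb A_1$) coincides with $\nu_0N_1$. Then $\mathbb A_1$ has dimension $\dim N_1+1$, so $N_1$ is a hypersurface of $\mathbb A_1$ lying in a sphere of that same dimension. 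It is therefore an open subset of a round sphere of dimension $\dim N_1$, i.e.\ an extrinsic sphere of the ambient space.

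First I show that $N$ has flat normal bundle. The tangent space $TN$ is invariant under all shape operators of $M$ by Lemma~\ref{lem:4.1+}(4), and it is also invariant under $A_X$. The normal space of $N$ splits as $\mathbb R X\oplus \nu M_{|N}$. The shape operator of $N$ in a direction $\xi\in\nu M$ is the restriction of $A^M_\xi$ to $TN$, and all these restrictions commute because $R^\perp_M=0$. Each of them also commutes with $A_X$: by Lemma~\ref{lem:4.1+}(3), $A_X$ is the scalar $\mu_i$ on each $E_{i|N}$, and each $E_i$ is invariant under every shape operator of $M$ by \eqref{eq:-1}. By the Ricci identity for $N$, all shape operators of $N$ commute, so $R^\perp_N=0$. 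Equivalently, $\nu_0N=\nu N$, which is consistent with \eqref{eq:u8e} since $\nu_0M=\nu M$.

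Next I pass to the factor. An extrinsic product has normal bundle equal to the sum of the factors' normal bundles in their own affine spans, with shape operators acting blockwise. Hence flatness of $\nu N$ implies flatness of $\nu N_1$ inside $\mathbb A_1$. This gives $\nu N_1=\nu_0N_1=\mathbb R\vec p$, and fullness of $N_1$ in $\mathbb A_1$ then yields codimension one, which finishes the argument.

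The main obstacle is the first step: verifying that the normal curvature of $N$ in the mixed directions $X$ and $\xi\in\nu M$ vanishes. I would handle this through the commutation $[A_X,A_\xi]=0$ obtained from the eigenspace compatibility above. A subtler point is making sure that $\alpha_N$ has no components outside $\mathbb RX\oplus\nu M$, which holds because $\nu_qN=\mathbb RX_q\oplus\nu_qM$ exactly.
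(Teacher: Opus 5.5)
Your proposal is correct and follows the argument the paper intends (it gives no separate proof, only the remark after Theorem~\ref{thm:main1B}): flatness of $\nu M$ together with Theorem~\ref{thm:main1}(5) gives $\nu_0N=\nu N$, this flatness passes to the extrinsic factor $N_1$, and Lemma~\ref{lem:Main}(2),(5) then make $N_1$ a hypersurface of its affine span lying in a sphere, hence an open subset of that sphere. Your direct check of $R^\perp_N=0$ via commuting shape operators is valid, but it is redundant, since it follows at once from \eqref{eq:u8e} with $\nu_0M=\nu M$.
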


\begin{remark}\label{rem:admissible}
Let us keep the assumptions and notation of Lemma \ref{lem:Main}. Let $N'$ be an extrinsic factor of $N$, which must be contained in a sphere, and let $\mathbb A$ be its affine span.
Let us write
$$\nu _0N'= \bar{\nu}_0N'\oplus \mathbb R^s,$$
where $\bar{\nu}_0N'$ is the flat part of the normal bundle of $N'\subset \mathbb A$ and $\mathbb R^s$ is to be regarded as the pull-back of the normal bundle of $\mathbb A\subset \mathbb R^{n+p}$. Namely, $\mathbb R^s$ corresponds to the normal fields of $N'$ that are constant in the ambient space. The unit field $X_{|N'}$ is a parallel section of $\nu _0N'$, which does not in general lie in $\bar{\nu}_0N'$, as the examples in \cite[Sec. 5]{DO} show. Then $X$ is a parallel section of the bundle $\mathbb RY\oplus \bar{\nu}_0N'$, where $Y$ is the component of $X$ in $\mathbb R^s$.
If $Y\neq 0$, then $\mathbb R\simeq \mathbb RY$ is a flat one-dimensional subbundle of $\nu _0N'$ and $X$ is a parallel section of
\begin{equation}\label{eq:145}
\bar{\nu}_0N'\oplus \mathbb R.
\end{equation}
If $Y=0$ we choose $\mathbb R$ to be a parallel one-dimensional subbundle of
$\mathbb R^s$ (perhaps after immersing $N'$ as a non-full submanifold).
In any case, $X_{|N'}$ is a parallel section of the bundle (\ref{eq:145}).
Let $\hat{\eta}$ be the (unique) curvature normal field associated to $N'$. Then, by Lemma \ref{lem:Main}(4), the component $\hat{\eta} -\langle \hat{\eta},X\rangle X$ of $\hat{\eta}$ perpendicular to $X$ has constant length.
This is the motivation for the definition below, which we only need for one-dimensional $N'$.
\end{remark}

\begin{definition}\label{def:6} Let $C$ be a full (local) one-dimensional submanifold of $\mathbb R^m$, which is contained in a sphere. Let us regard $C$ as a non-full submanifold of $\mathbb R^{m+1}$, and let $\nu_0C$ denote its flat normal bundle computed in $\mathbb R^{m+1}$. The submanifold $C$ is called \emph{admissible} if there is a parallel section $Y\neq 0$ of $\nu _0C$ such that the projection to $Y^\perp$ of the curvature normal of $C$ has constant length.
\end{definition}

If the curvature normal of $C$ has constant length, then $C$ is admissible. In fact, It suffices to choose $Y$ as any linear combination of $e_{m+1}$ and  the position vector $\vec p$ of $C$, with respect to the centre of the sphere that contains $C$. 

\section{Inner products of curvature normals}\label{sec:inner}

We begin this section with an auxiliary result that will be used later. 

\begin{lemma}\label{lem:9f7}
    If $I_0\neq \emptyset $,  then $|I_1|\geq 2$.
\end{lemma}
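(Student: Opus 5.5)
The plan is to argue by contradiction, combining the structure of the extrinsic factors of $N$ (Lemma~\ref{lem:Main}) with Lemma~\ref{lem:I_0}. By Remark~\ref{rem:820}, $I_1\neq\emptyset$. So it suffices to assume that $I_0\neq\emptyset$ and $I_1=\{j\}$ for a single index $j$, and to derive a contradiction.

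First, decompose $N$ locally into irreducible extrinsic factors, $N=N_0\times N_1\times\dots\times N_r$, where $N_0$ denotes a possible Euclidean factor. The $\nu_0N$-eigendistributions $\bar E_i=E_{i|N}$, $i\in I$ (Corollary~\ref{cor:11}), are compatible with this product. The curvature normals of a product are exactly the curvature normals of the factors, each extended trivially to the other factors. The Euclidean factor corresponds to a vanishing curvature normal. Hence each $\bar E_i$ is tangent to exactly one factor, and each factor is a sum of some of the $\bar E_i$.

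Second, treat the irreducible factors $N_s$ with $s\geq1$. By Lemma~\ref{lem:Main}(3), $TN_s$ is the only $\nu_0N_s$-eigendistribution. So the set $J_s$ from Lemma~\ref{lem:I_0} is a singleton, and by that lemma it meets $I_1=\{j\}$. Thus $J_s=\{j\}$ for every $s\geq1$. Since distinct indices give distinct eigendistributions $\bar E_i$ and each $\bar E_i$ lies in a single factor, at most one irreducible factor exists, and it has $TN_1=\bar E_j$.

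Third, consider any $i\in I_0$. Then $\bar E_i$ cannot be tangent to an irreducible factor, so it must lie in the Euclidean factor $N_0$. Hence $\tilde\eta_i=0$, which by Remark~\ref{rem:800} means $i=g$ and $\eta_g=0$. It remains to rule out a nontrivial Euclidean factor $N_0$. I expect this to be the main (though mild) obstacle, since Lemma~\ref{lem:I_0} is stated only for irreducible factors. I would repeat its proof for $N_0$: the shape operators of $N_0$ vanish, so $A'_{\xi^t}=0$ on $N_0$ and $\xi^t$ is constant along $N_0$. Then
\[
M=N_0\times\bigcup_{t\in(-\varepsilon,\varepsilon)}(N_1)_{\xi^t}\qquad\text{(locally)},
\]
contradicting the local irreducibility of $M$. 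Hence $I_0=\emptyset$, a contradiction, and therefore $|I_1|\geq2$.
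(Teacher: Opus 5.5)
Your argument is correct, but it takes a different and much heavier route than the paper.

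\textbf{The paper's route.} The paper does not decompose $N$ into irreducible factors. Assuming $I_1=\{i\}$, Corollary~\ref{cor:uw8} shows that the parallel isoparametric section $\eta_{k|N}$ has exactly two constant eigenvalues: $a_i^2$ on $\bar E_i$ and $0$ on $\bar E_i^\perp=\bigoplus_{l\in I_0}\bar E_l$. By Codazzi, both eigendistributions are autoparallel. Since also $\bar\alpha(\bar E_i,\bar E_i^\perp)=0$, Moore's lemma splits $N=N_0\times N_1$ with $TN_0=\bar E_i^\perp$. Here $N_0$ is not assumed Euclidean. All curvature normals of $N_0$ are $I_0$-normals, hence perpendicular to $\xi^t$, so $\xi^t$ is constant along $N_0$ and $M$ splits as in Lemma~\ref{lem:I_0}. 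The only inputs are Codazzi, Moore's lemma and Corollary~\ref{cor:uw8}.

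\textbf{Your route.} You instead invoke Lemma~\ref{lem:Main}(3), which rests on Corollary~\ref{cor:exFactor}, the isoparametric rank theorem and Appendix~\ref{App:A}. This is not circular, since none of these uses Lemma~\ref{lem:9f7}. It also gives you something the paper's argument does not: your second and third steps never use $|I_1|=1$. Lemma~\ref{lem:Main}(3) and Lemma~\ref{lem:I_0} already give $J_s\subset I_1$ for every non-Euclidean irreducible factor. Hence every $I_0$-eigendistribution must lie in the Euclidean factor, and your final splitting argument excludes a nontrivial Euclidean factor.

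So what you have actually proved is that $I_0=\emptyset$ unconditionally once Lemma~\ref{lem:Main} is available. The hypothesis of the lemma therefore never occurs, and Lemma~\ref{lem:9f7} becomes vacuous. The paper's argument, by contrast, is self-contained and elementary, but it genuinely uses $|I_1|=1$. That assumption is what makes $\bar A_{\eta_k}$ have just two eigenvalues, so that the $I_0$-part splits off by Moore's lemma.

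\textbf{One point to fill in.} Your first step asserts that each $\bar E_i$ is tangent to exactly one factor. This needs that no irreducible factor has a vanishing curvature normal; otherwise that factor's eigendistribution would merge with $TN_0$ into a single zero eigendistribution of $N$. This follows from Lemma~\ref{lem:Main}(1): on such a factor, $\langle\eta_k,\tilde\eta_j\rangle=c\neq0$ for every $j\in J_s$, so $\tilde\eta_j\neq 0$.
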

\begin{proof}
By Remark~\ref{rem:820} or Lemma~\ref{lem:I_0}, $I_1\neq\emptyset $.
Assume that $I_1=\{i\}$. By the Codazzi identity, the distributions
\[
\bar E_i^\perp=\ker\bar A_{\eta_k}
\quad\text{and}\quad
\bar E_i=\ker(\bar A_{\eta_k}-a_i^2\mathrm{Id})
\]
are orthogonally complementary autoparallel distributions such that
\(
\bar\alpha(\bar E_i,\bar E_i^\perp)=0,
\)
where $\bar\alpha$ is the second fundamental form of $N$. Hence, by
Moore's lemma, $N$ locally splits extrinsically as
$N=N_0\times N_1$, where $N_0$ is an integral manifold of
$\bar E_i^\perp$ and $N_1$ is an integral manifold of $\bar E_i$.
Locally, $M$ is the union of parallel manifolds of $N$. Namely,
\[
M=\bigcup_{t\in(-\epsilon,\epsilon)}N_{\xi^t}
\qquad\text{(locally)},
\]
where $\xi^t_q=\gamma_q(t)-q$ (see the beginning of
Section~\ref{sec:structure}). Observe that the curvature normals of $N$
corresponding to $N_0$ are
\(
\{\tilde\eta_{i|N}:i\in I_0\}.
\)
But such curvature normals are always perpendicular to
$\gamma_q(t)-q$, since $f_{q,i}(t)$ is identically $1$ for all $q\in N$.

It is now standard to show that $M$ extrinsically splits as
\[ M= 
N_0\times
\bigcup_{t\in(-\epsilon,\epsilon)}(N_1)_{\xi^t}
\qquad\text{(locally)}.
\]
This is a contradiction, since $M$ is irreducible.
\end{proof}

\medskip

Let us compute the inner product between $(\eta_i)_{\gamma_q(t)}$ and
$(\eta_j)_{\gamma_q(t)}$, where $i,j\in I_1$ and $i\neq j$.
From (\ref{eq:84}), taking into account (\ref{eq:002}) and the fact that
$\gamma_q(t)$ has unit speed, we obtain

\begin{equation}\label{eq:<,>}
  \langle (\eta_i)_{\gamma_q(t)}, (\eta_j)_{\gamma_q(t)}\rangle =  
    \frac {\langle (\tilde \eta_i)_q,  
    (\tilde \eta_j)_q\rangle  - f'_{q,i}(t)f'_{q,j}(t)}{f_{q,i}(t)f_{q,j}(t)}
\end{equation}

This function is constant if and only if
\begin{equation}\label{eq:<,>2}
    \frac {\langle (\tilde \eta_i)_q,
    (\tilde \eta_j)_q\rangle - f'_{q,i}(t)f'_{q,j}(t)}
    {f_{q,i}(t)f_{q,j}(t)}
    = \lambda := \langle (\eta_i)_q,(\eta_j)_q\rangle
\end{equation}
if and only if
\begin{equation}\label{eq:<,>3}
    \langle (\tilde \eta_i)_q,
    (\tilde \eta_j)_q\rangle - f'_{q,i}(t)f'_{q,j}(t)
    = \lambda f_{q,i}(t)f_{q,j}(t)
\end{equation}
if and only if the derivatives of the left- and right-hand sides coincide,
since both sides coincide at $t=0$. Namely, 
\begin{equation}\label{eq:<,>4}
     - f''_{q,i}(t)f'_{q,j}(t) - f'_{q,i}(t)f''_{q,j}(t)
    = \lambda f '_{q,i}(t)f_{q,j}(t) + \lambda f_{q,i}(t)f '_{q,j}(t)
\end{equation}
By (\ref {eq:expsol}), $f''_{q,i}(t)= -a_i^2f_{q,i}(t)$ and that $f''_{q,j}(t)= -a_j^2f_{q,j}(t)$. Then 
\begin{equation}\label{eq:<,>5}
      a_i^2f_{q,i}(t)f'_{q,j}(t) + a_j^2 f'_{q,i}(t)f_{q,j}(t)
    = \lambda f '_{q,i}(t)f_{q,j}(t) + \lambda f_{q,i}(t)f '_{q,j}(t) 
\end{equation}
or, equivalently, 

\begin{equation}\label{eq:<,>6}
      (a_i^2 -\lambda) f_{q,i}(t)f'_{q,j}(t) 
    = -( a_j^2 - \lambda) f_{q,j}(t)f '_{q,i}(t) 
\end{equation}

We may assume that $i<j$, and thus, by our convention,
$\Vert \eta_i\Vert=a_i\geq a_j=\Vert \eta_j\Vert$.
From the definition of $\lambda$ and the Cauchy-Schwarz inequality,
$\lambda\leq a_i a_j$, with equality if and only if
$(\eta_j)_q$ is a nonnegative scalar multiple of $(\eta_i)_q$. Then
$a_i^2=\lambda$ implies that $a_i=a_j$ and that equality holds in the
Cauchy-Schwarz inequality. This contradicts Lemma \ref{lem:acople}(3). Hence, $a_i^2>\lambda$, and therefore
\begin{equation}\label{eq:45A}
\alpha_i:=a_i^2-\lambda\neq0.
\end{equation}
This also implies, from (\ref{eq:<,>6}), that
\begin{equation}\label{eq:45B}
\alpha_j:=a_j^2-\lambda\neq0.
\end{equation}
Observe that $\alpha_i=\alpha_j$ if and only if $a_i=a_j$.

\smallskip

The calculations in Section~\ref{auxCal} show that the equality (\ref{eq:<,>6}) never holds,
thus proving the following lemma.

\begin{lemma}\label{lem:non<,>}
If $i,j\in I_1$, $i\neq j$, then $\langle\eta_i,\eta_j\rangle$ is not constant along
$\gamma_q(t)$ and therefore is not constant on $M$.
\end{lemma}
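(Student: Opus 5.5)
The plan is to show directly that identity \eqref{eq:<,>6} cannot hold for all $t$ in an interval, using the explicit form \eqref{eq:repeat} of $f_{q,i}$ and $f_{q,j}$, which is available because $i,j\in I_1$. Since constancy of $\langle\eta_i,\eta_j\rangle$ along $\gamma_q(t)$ is equivalent to \eqref{eq:<,>6}, this gives the first assertion. The second assertion, non-constancy on $M$, is then immediate.

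\textbf{Step 1: amplitude--phase form.} Since $a_i>0$ and $f_{q,i}(0)=1$, write
\[
f_{q,i}(t)=R_i\cos(a_it+\theta_i),\qquad f'_{q,i}(t)=-a_iR_i\sin(a_it+\theta_i),
\]
with $R_i=\sqrt{1+\mu_i(q)^2/a_i^2}>0$, and similarly for $j$. Put $u=a_it+\theta_i$ and $v=a_jt+\theta_j$. Dividing \eqref{eq:<,>6} by $-R_iR_j$, it becomes
\[
\alpha_i a_j\cos u\,\sin v+\alpha_j a_i\cos v\,\sin u=0 .
\]
The product-to-sum formulas turn this into
\[
(\alpha_i a_j+\alpha_j a_i)\sin(u+v)+(\alpha_i a_j-\alpha_j a_i)\sin(v-u)=0
\]
for all $t$ near $0$. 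Here $u+v=(a_i+a_j)t+\text{const}$ and $v-u=(a_j-a_i)t+\text{const}$.

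\textbf{Step 2: the case $a_i\neq a_j$.} The two frequencies $a_i+a_j$ and $|a_i-a_j|$ are distinct and positive. Sine functions with distinct positive frequencies are linearly independent on any interval, since the identity extends analytically to $\mathbb R$. Hence both coefficients vanish:
\[
\alpha_ia_j=-\alpha_ja_i \quad\text{and}\quad \alpha_ia_j=\alpha_ja_i .
\]
Together these give $\alpha_ia_j=0$. This contradicts \eqref{eq:45A} and $a_j>0$.

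\textbf{Step 3: the case $a_i=a_j=:a$.} Here $\alpha_i=\alpha_j=:\alpha\neq0$, and $v-u$ is constant. The identity therefore reads
\[
2\alpha a\sin(2at+\theta_i+\theta_j)=\text{const}.
\]
This is impossible, because the left-hand side is a non-trivial sinusoid with $2\alpha a\neq0$ and so is not constant on any interval.

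\textbf{Main obstacle.} The step needing the most care is justifying that the coefficients must vanish, since the identity is only given on a small interval. I would handle this by analytic continuation of trigonometric polynomials, or equivalently by differentiating at $t=0$ enough times to separate the frequencies. One should also record that $\alpha_j\neq0$ by \eqref{eq:45B}, although only $\alpha_i\neq0$ is actually used.
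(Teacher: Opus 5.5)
Your proof is correct, and it follows the paper's overall strategy: insert the explicit solutions (\ref{eq:repeat}) into (\ref{eq:<,>6}), treat $a_i\neq a_j$ and $a_i=a_j$ separately, and conclude by linear independence of trigonometric functions. What differs is the decomposition.

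The paper keeps the parameters $b_l=\mu_l(q)$. It splits $f_{q,i}f'_{q,j}$ and $f_{q,j}f'_{q,i}$ into even and odd parts with respect to the products $\cos(a_it)\cos(a_jt)$, $\sin(a_it)\sin(a_jt)$, $\cos(a_it)\sin(a_jt)$, $\sin(a_it)\cos(a_jt)$. In the case $a_i\neq a_j$ it must then show that four equations, nonlinear in $b_1,b_2$, are incompatible, reaching $-b_2^2=a_2^2$. In the equal-frequency case it shows that $b_1+b_2=0$ and $b_1b_2=a^2$ cannot both hold.

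Your amplitude--phase normalization absorbs $\mu_i(q),\mu_j(q)$ into the phases. After the product-to-sum step only two sinusoids remain, with frequencies $a_i+a_j$ and $|a_i-a_j|$. The resulting conditions are linear in $\alpha_ia_j$ and $\alpha_ja_i$ and give $\alpha_ia_j=0$ at once. For $a_i=a_j$, the identity (\ref{eq:<,>6}) amounts to $\alpha\,(f_{q,i}f_{q,j})'=0$, where $f_{q,i}f_{q,j}$ is a non-constant sinusoid plus a constant.

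Your version shows more clearly that the obstruction comes only from the frequency structure together with $\alpha_i\neq 0$, not from the particular values of the $\mu$'s. The paper's version stays in the original parameters, at the cost of a longer elimination. In Step 3 the constant on the right-hand side is in fact $0$; this does not affect the conclusion.
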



\begin{corollary}\label{cor:algCons}
Let $M$ be an irreducible and full submanifold of Euclidean space with
$\rank(M)\geq 2$. Assume that every curvature normal of $M$ with respect
to $\nu_0M$ has constant length and that $M$ is not a submanifold with  constant
principal curvatures.
Then the inner product of any pair of curvature normals is constant if
and only if $I=I_1$ and $|I_1|=1$.
\end{corollary}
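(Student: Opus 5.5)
The forward implication follows by combining the two structural constraints on the index sets $I_0$ and $I_1$ from Sections~\ref{sec:long} and~\ref{sec:inner}. The converse is a direct computation with Lemma~\ref{lem:acople}(2). Throughout, I work near a regular point (Definition~\ref{def:regular}), where $N$, the functions $f_{q,i}$ and the sets $I,I_0,I_1$ of \eqref{def:5} are defined. Since regular points are dense and the inner products $\langle\eta_i,\eta_j\rangle$ are continuous, constancy on a neighborhood of every regular point is equivalent to constancy on $M$, because $M$ is connected. Here $I=\{1,\dots,g\}\setminus\{k\}$ contains every index other than $k$, including the parallel ones with $i<k$.

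($\Rightarrow$) Assume all $\langle\eta_i,\eta_j\rangle$ are constant.
\begin{enumerate}
\item If $|I_1|\geq 2$, pick $i\neq j$ in $I_1$. Then $\langle\eta_i,\eta_j\rangle$ is not constant along $\gamma_q(t)$ by Lemma~\ref{lem:non<,>}, a contradiction. Hence $|I_1|\leq 1$.
\item By Remark~\ref{rem:820} (equivalently, Lemma~\ref{lem:I_0}), not all $f_{q,i}$ are identically $1$, so $I_1\neq\emptyset$. Therefore $|I_1|=1$.
\item If $I_0\neq\emptyset$, Lemma~\ref{lem:9f7} would force $|I_1|\geq2$, contradicting the previous step. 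So $I_0=\emptyset$, and $I=I_1$ with $|I_1|=1$.
\end{enumerate}
The only substantive input is Lemma~\ref{lem:non<,>}, which rests on the computation showing that \eqref{eq:<,>6} can never hold. Given that lemma, nothing in this direction is an obstacle.

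($\Leftarrow$) Assume $I=I_1=\{i\}$. Then $g=2$, and the only curvature normals are $\eta_k$ and $\eta_i$. The squared lengths $\langle\eta_k,\eta_k\rangle=a_k^2$ and $\langle\eta_i,\eta_i\rangle=a_i^2$ are constant by the general assumption. By Lemma~\ref{lem:acople}(2), since $i\in I_1$, we have $\langle\eta_i,\eta_k\rangle=a_i^2$, which is constant. So every inner product of curvature normals is constant near each regular point, and hence on $M$ by the density argument above.

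The one point requiring care is the local-to-global bookkeeping. The sets $I_0$ and $I_1$ are defined only after the local reduction, so they might a priori differ near different regular points. I would handle this by proving the equivalence near each regular point. In the forward direction, the constancy hypothesis then forces $I=I_1$ and $|I_1|=1$ near every regular point. Since $g$ is constant, this means $g=2$ everywhere, and the condition is the same at all regular points.
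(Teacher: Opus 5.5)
Your proposal is correct and follows the paper's proof essentially step for step: in the forward direction, Lemma~\ref{lem:non<,>} gives $|I_1|\le 1$, Remark~\ref{rem:820} gives $I_1\neq\emptyset$, and Lemma~\ref{lem:9f7} rules out $I_0\neq\emptyset$, while the converse is Lemma~\ref{lem:acople}(2) with $g=2$. One small fix to your local-to-global remark: local constancy on a dense open set plus mere continuity does not give global constancy (a Cantor-type function is a counterexample), but the inner products are smooth, so their differentials vanish on a dense set and hence everywhere, which is what you actually need.
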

\begin{proof}
Assume that $I=I_1$ and $|I_1|=1$. Then, in our notation, there are only
two curvature normals: $\eta_k$, the longest non-parallel curvature normal,
and $\eta_i$, where $k\in\{1,2\}$ and
$i\in I_1=\{1,2\}\setminus\{k\}$ (see Remark~\ref{rem:123}).
Then $\langle\eta_i,\eta_k\rangle$ is constantly $a_i^2$ by
Lemma~\ref{lem:acople}(2), proving the if part.

Let us prove the converse. If $|I_1|\geq2$, let $i,j\in I_1$ with
$i\neq j$. In particular, this always holds if $I_0\neq\emptyset $, by
Lemma~\ref{lem:9f7}. Then $\langle\eta_i,\eta_j\rangle$ is not constant
by Lemma~\ref{lem:non<,>}. Therefore, if the inner product of any two curvature normals is constant,
then $|I_1|=1$. This implies that $I_0=\emptyset $ and hence
$I=I_1$.
This completes the proof.
\end{proof}

\begin{corollary}\label{cor:algCons2}
Let $M$ be an irreducible and full submanifold of Euclidean space with
flat normal bundle. Assume that the second fundamental form is algebraically constant and that $M$ is not an isoparametric submanifold. 
Then $M$ has positive constant sectional curvature. 
\end{corollary}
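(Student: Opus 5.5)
We first treat the case in which $M$ has constant principal curvatures near some point. Then $M$ is isoparametric, since the normal bundle is flat and the principal curvatures are constant. This is excluded by hypothesis. Hence, after restricting to a neighborhood of a regular point, $M$ does not have constant principal curvatures around any of its points.

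Flatness of the normal bundle gives $\nu_0M=\nu M$, so $\rank M=\operatorname{codim}M$. If the codimension is one, we are in the hypersurface case; we comment on it at the end. Assume now that the codimension is at least $2$. An algebraically constant second fundamental form means that, at any two points, $\alpha$ is carried to $\alpha$ by a pair of orthogonal isomorphisms of the tangent and normal spaces. Since the curvature normals form the joint spectrum of the commuting family $\{A_\xi\}$, all invariants of the configuration $\{\eta_1,\dots,\eta_g\}$ are constant: the lengths $\|\eta_i\|$ and the Gram matrix $(\langle\eta_i,\eta_j\rangle)$, up to a relabelling that is locally constant by continuity. Corollary~\ref{cor:algCons} then gives $I=I_1$ and $|I_1|=1$. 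Thus $g=2$, with curvature normals $\eta_k$ and $\eta_i$, where $\dim E_k=1$ and $E_k$ is autoparallel by Lemma~\ref{lem:4.1}. Moreover, Lemma~\ref{lem:acople}(2) gives $\langle\eta_i,\eta_k\rangle=\|\eta_i\|^2=a_i^2$.

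Next we compute the curvature. With flat normal bundle, the Gauss equation gives $K(X,Y)=\langle\eta_a,\eta_b\rangle$ for unit vectors $X\in E_a$, $Y\in E_b$ with $a\neq b$, and $K(X,Y)=\|\eta_a\|^2$ when $a=b$. The possible values are therefore:
\begin{itemize}
\item within $E_i$: $a_i^2$;
\item across $E_i$ and $E_k$: $\langle\eta_i,\eta_k\rangle=a_i^2$;
\item within $E_k$: none, since $\dim E_k=1$.
\end{itemize}
Because the shape operators are simultaneously diagonal with respect to $E_k\oplus E_i$, the curvature operator has the form $R(X,Y)=\sum_\xi (A_\xi X\wedge A_\xi Y)$. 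Using a basis adapted to the splitting, this equals $a_i^2\,X\wedge Y$ for all $X,Y$. The sectional curvature is therefore the constant $a_i^2$. Finally, $a_i>0$: by Remark~\ref{rem:800}, $a_i=0$ would force $\eta_i=0$, and then $\langle\eta_i,\eta_k\rangle=0=a_i^2$, which would place $i$ in $I_0$, contradicting $i\in I_1$.

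The main obstacle is the step from algebraic constancy to constancy of the inner products, together with the hypersurface case. In codimension one, $\rank M\geq2$ fails, so Corollary~\ref{cor:algCons} does not apply. There I would argue directly: constant principal curvatures means $M$ is isoparametric, and the hypersurface is excluded. The statement presumably intends rank at least $2$, so the codimension-one case needs only this comment.
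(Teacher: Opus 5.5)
Your proposal is correct and follows the paper's route. Algebraic constancy gives constant inner products of the curvature normals, Corollary~\ref{cor:algCons} forces $g=2$ with a one-dimensional autoparallel $E_k$, Lemma~\ref{lem:acople} gives $\langle\eta_i,\eta_k\rangle=\|\eta_i\|^2$, and the Gauss equation then yields constant curvature $a_i^2>0$. The differences are only in presentation: you evaluate the curvature operator on an adapted basis of $\Lambda^2$ rather than expanding an arbitrary orthonormal pair $u=ae_1+X$, $v=be_1+Y$ as the paper does, and you spell out the codimension-one case and the constancy of the Gram matrix, both of which the paper leaves implicit.
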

\begin{proof} If the second fundamental form is algebraically constant, then
the inner product of any two curvature normals is constant. 
     Then, by Corollary~\ref{cor:algCons}, at any point of $M$ there are exactly two different eigenspaces, let us say $E_1=\operatorname{span}\{e_1\}$, with $\|e_1\|=1$, and 
$E_2$  a subspace of dimension $n-1$, and $2\in I_1 =I$. Then associated curvature normals  $\eta _1,\eta _2$, are related by the condition $\langle \eta _1,\eta _2\rangle = \langle \eta _2, \eta _2\rangle$ given by Lemma~\ref{lem:acople}.
We have also that 
\[
\alpha(E_1,E_2)=0,\qquad
\alpha(e_1,e_1)=\eta_1, \qquad
\alpha(X,X)=\Vert X\Vert^2 \eta_2 \, ,
\]
for every $X\in E_2$. 

Let us show that the sectional curvatures are constant.
Let $V=E_1\oplus E_2$, and let $u,v\in V$ be arbitrary
orthonormal vectors. Write
\[
u=ae_1+X,\qquad v=be_1+Y,
\]
where $X,Y\in E_2$. Since $u$ and $v$ are orthonormal, we have
\[
a^2+\|X\|^2=1\, ,\qquad
b^2+\|Y\|^2=1 \, ,\qquad
ab+\langle X,Y\rangle=0.
\]

Since $\alpha(E_1,E_2)=0$, it follows that
\[
\alpha(u,v)
=ab\,\eta_1+\alpha(X,Y)
=ab\,\eta_1+\langle X,Y\rangle\eta_2\, .
\]
Using $\langle X,Y\rangle=-ab$, we obtain
\[
\alpha(u,v)=ab(\eta_1-\eta_2)\, .
\]
Moreover,
\[
\alpha(u,u)
=a^2\eta_1+\|X\|^2\eta_2
=a^2\eta_1+(1-a^2)\eta_2,
\]
and similarly
\[
\alpha(v,v)
=b^2\eta_1+(1-b^2)\eta_2.
\]

Therefore, by the Gauss equation,
\[
\begin{aligned}
K(u,v)
&=
\langle\alpha(u,u),\alpha(v,v)\rangle
-\|\alpha(u,v)\|^2\\
&=
\left\langle
a^2\eta_1+(1-a^2)\eta_2,\,
b^2\eta_1+(1-b^2)\eta_2
\right\rangle\\
&\qquad
-a^2b^2\|\eta_1-\eta_2\|^2.
\end{aligned}
\]
Expanding the right-hand side gives
\[
\begin{aligned}
K(u,v)
={}&a^2b^2\|\eta_1\|^2
+a^2(1-b^2)\langle\eta_1,\eta_2\rangle\\
&+(1-a^2)b^2\langle\eta_1,\eta_2\rangle
+(1-a^2)(1-b^2)\|\eta_2\|^2\\
&-a^2b^2
\left(
\|\eta_1\|^2
-2\langle\eta_1,\eta_2\rangle
+\|\eta_2\|^2
\right).
\end{aligned}
\]
Taking into account that $\langle\eta _1 ,\eta _2\rangle = \langle\eta _2 ,\eta _2\rangle$, we obtain
\[
K(u,v)
= \langle\eta _2, \eta _2\rangle
\]

Thus every $2$-plane in $E_1\oplus E_2$ has the same sectional
curvature.
\end{proof}


\subsection{Auxiliary calculations} \label{auxCal}
Just for the sake of simplifying the notation we set $i=1$ and  $j=2$.
\[
f_l(t)=\cos(a_l t)-\frac{b_l}{a_l}\sin(a_l t),
\qquad l=1,2,
\]
where $a_1\neq0$. Then
\[
f_l'(t)=-a_l\sin(a_l t)-b_l\cos(a_l t).
\]

Define
\[
g(t)=f_1(t)f_2'(t),
\ \ 
P(t)=\frac12\bigl(g(t)+g(-t)\bigr) \text{ (even part)},
\ \ 
I(t)=\frac12\bigl(g(t)-g(-t)\bigr) \text{ (odd part)}.
\]
A direct computation gives
\[
P(t)
=
-b_2\cos(a_1t)\cos(a_2t)
+\frac{b_1a_2}{a_1}\sin(a_1t)\sin(a_2t),
\]
and
\[
I(t)
=
-a_2\cos(a_1t)\sin(a_2t)
+\frac{b_1b_2}{a_1}\sin(a_1t)\cos(a_2t).
\]

After swapping  the indices $1$ and $2$, define
\[
\bar g(t)=f_2(t)f_1'(t),
\qquad
\bar P(t)=\frac12\bigl(\bar g(t)+\bar g(-t)\bigr),
\qquad
\bar I(t)=\frac12\bigl(\bar g(t)-\bar g(-t)\bigr).
\]
Then
\[
\bar P(t)
=
-b_1\cos(a_1t)\cos(a_2t)
+\frac{b_2a_1}{a_2}\sin(a_1t)\sin(a_2t),
\]
and
\[
\bar I(t)
=
-a_1\sin(a_1t)\cos(a_2t)
+\frac{b_1b_2}{a_2}\cos(a_1t)\sin(a_2t).
\]

\smallskip

If the equality (\ref{eq:<,>6}) holds, then the even (resp. odd) part of
the left-hand side coincides with the even (resp. odd) part of the
right-hand side. 

\smallskip

Note, from (\ref{eq:45A}) and (\ref{eq:45B}), that 
$\alpha_1\neq 0\neq\alpha_2$.

\smallskip

\begin{itemize}
 \item[(1)] Case $a_1\neq a_2$ (which implies $\alpha _1\neq \alpha _2$).
\end{itemize}
The condition
\[
\alpha_1P(t)=-\alpha_2\bar P(t)
\]
gives, using the independence of
$\cos(a_1t)\cos(a_2t)$ and
$\sin(a_1t)\sin(a_2t)$,
\[
{
\begin{cases}
\alpha_1b_2+\alpha_2b_1=0,\\[1mm]
\alpha_1b_1a_2^2+\alpha_2b_2a_1^2=0.
\end{cases}}
\]

For the odd parts,
\[
\alpha_1I(t)=-\alpha_2\bar I(t).
\]
Since $a_1\neq a_2$,
\[
\cos(a_1t)\sin(a_2t)
\quad\text{and}\quad
\sin(a_1t)\cos(a_2t)
\]
are linearly independent. 
Hence
\[
{
\begin{cases}
\alpha_1a_2^2=\alpha_2b_1b_2,\\[1mm]
\alpha_1b_1b_2=\alpha_2a_1^2.
\end{cases}}
\]

Thus the four conditions are
\[
{
\begin{cases}
\alpha_1b_2+\alpha_2b_1=0,\\
\alpha_1b_1a_2^2+\alpha_2b_2a_1^2=0,\\
\alpha_1a_2^2=\alpha_2b_1b_2,\\
\alpha_1b_1b_2=\alpha_2a_1^2.
\end{cases}}
\]
There is no nontrivial solution when $a_1,a_2>0$ (as in our situation).
Indeed, the third and fourth equations show that 

\[
-\frac{b_1}{b_2}
=
\frac{\alpha_1}{\alpha_2}
=
\frac{b_1b_2}{a_2^2}.
\]
Using the third and the forth equality we obtain that  $b_1,b_2\neq0$. This implies
\[
-b_2^2=a_2^2,
\]
which is impossible for real $b_2$ and $a_2>0$.
Consequently, the four conditions are incompatible.

\begin{itemize}
    
 \item[(2)] Case $a_1=a_2=:a>0$ (which implies that $\alpha_1=\alpha_2 =:\alpha\neq 0$).
 
\end{itemize}
Now
\[
P(t)
=
-b_2\cos^2(at)+b_1\sin^2(at),
\]
and
\[
\bar P(t)
=
-b_1\cos^2(at)+b_2\sin^2(at)
\]
(observe that $\cos ^2(at)$ and $\sin^2(at)$ are linearly independent).
Therefore, equality (\ref{eq:<,>6}) implies that 
\[
\alpha P(t)=-\alpha \bar P(t)
\]
gives only the condition $\alpha b_1 + \alpha b_2= 0$, or equivalently 
\begin{equation}\label{eq:cont} b_1+b_2=0.
\end{equation}

For the odd parts, the two functional terms coincide. Namely, 
\[
I(t)=\bar I(t)
=
\left(\frac{b_1b_2}{a}-a\right)\sin(at)\cos(at).
\]
Hence the equality
\[
\alpha I(t)=-\alpha \bar I(t)
\]
gives only one condition,
\(
{
2\alpha(b_1b_2-a^2)=0
}
\). Equivalently, 
\begin{equation*}
    b_1b_2=a^2
\end{equation*}
Then, from (\ref{eq:cont}), $-b_1^2=a^2$, which is a contradiction,
since $a\neq0$. \qed

\section{Proofs of the main results}\label{sec:proof}

\subsection{Proof of Theorem~\ref{thm:main1}}\ 
\noindent Parts (1) and (2) are just Lemma~\ref{lem:4.1}. Part (3) is Lemma~\ref{lem:4.2}. Part (4) is a consequence of (3) (see the proof of Lemma~\ref{lem:Main}). The proof of (5) is given in the discussion following
(\ref{eq:u8e}) in Section~\ref{rec}. Part (6) follows from Corollary~\ref{cor:11}. Part (7) follows from (6) and part (8) follows from Corollary~\ref{cor:uw8}. Part (9) is just Lemma~\ref{lem:Main}(\textit{6}). Part (10) follows from (6) and Lemma~\ref{lem:Main}(\textit{3}). 

\subsection {Proof of Theorem~\ref{thm:main1B}}\ 
It follows  from Theorem~\ref{thm:main1}(7)(10) and Lemma~\ref{lem:Main}(\textit{2}),(\textit{4}), (\textit{5}).

\subsection{Proof of Theorem~\ref{thm:main2} and Corollary~\ref{cor:main3}}
The theorem follows from Corollary~\ref{cor:algCons} and part (1) of Theorem~\ref{thm:main1}. The corollary follows from this theorem and Corollary~\ref{cor:algCons2}.

\section{General examples}\label{sec:examples}

The purpose of this section is to show that submanifolds $N$ as in
Theorem~\ref{thm:main1B}(A), together with one-dimensional
submanifolds contained in a sphere whose curvature normal has constant
length, do occur as generating hypersurfaces: any product $N$ of such
factors is the generating hypersurface of a locally irreducible
submanifold $M$ with $\nu_0M$-curvature normals of constant length.

This is not a full converse of Theorem~\ref{thm:main1B}, since that
theorem only requires the one-dimensional factors to be admissible.
We do not know how to carry out the construction below when a
one-dimensional factor is admissible but its curvature normal does not
have constant length.

\smallskip

Let $N_i\subset\mathbb R^{n_i}$, $i=1,\dots,s$, be full submanifolds
that are either as in Theorem~\ref{thm:main1B}(A), or are
one-dimensional with its curvature normal of constant length and
contained in a sphere. Let $N=N_1\times \dots \times N_s$. The position vector field, with respect to the centre
of the corresponding sphere, is denoted by $\vec p_i$, and its curvature
normal is denoted by $\tilde\eta_i$. Let
\[
\xi_i=-\frac{1}{r_i}\vec p_i,\qquad r_i=|\vec p_i|.
\]
Note that if $N_i$ is as in (A), then
\[
\tilde\eta_i=\frac{1}{r_i}\xi_i.
\]
In either case, $0\in\mathbb R^{n_i}$ is an isolated focal point of
$N_i$, and hence
\[
\langle\tilde\eta_i,\vec p_i\rangle=-1.
\]
Observe that the parallel normal fields $\xi_1,\dots,\xi_s$ of $N$ are
orthonormal and that 
\begin{equation}\label{eq:49}
\begin{cases}
   \langle \tilde\eta _i , \xi _i\rangle = \frac{1}{r_i},  \\
\langle \tilde\eta _j , \xi _i\rangle=0, \text{ if } j\neq i. 
\end{cases}
\end{equation}

Let $\mu _i , a_i \in \mathbb R$, $\mu _i \neq 0$, $a_i>0$ be such that 

\smallskip

\begin{enumerate}  
\item $\mu_i^2 + a_i^2 = 
\vert \tilde\eta_i\vert ^2$
\item $\sum _{i=1}^{s}r_i^2\mu_i^2 <1$
\end{enumerate}
Let, for $q\in N$,  
\begin{equation}\label{eq:50}\beta ^i_q(t) : = r_i \left (1-\cos(a_it) + \frac{\mu_i}{a_i}\sin(a_it)\right)(\xi _i)_q, \end{equation}
\begin{equation}\label{eq:51}
\gamma _{q,1} (t): = \sum_{i=1}^s\beta ^i_q(t)
\end{equation}
Then 
\begin{equation}\label{eq:52}
|\gamma _{q,1}'(0)|^2 = \sum _{i=1}^{s}r_i^2\mu_i^2 <1
\end{equation}

By Remark~\ref{rem:gamma_2} below, for any fixed $q\in N$, there exists a
curve $\gamma_2(t)$ in $\mathbb R^2$ such that
\[
\gamma_q(t):=\big(\gamma_{q,1}(t),\gamma_2(t)\big)
\]
has unit speed and constant curvature length $c$, where $c$ can be
chosen arbitrarily large. We may assume that $\gamma_2(0)=0$ and hence
$\gamma_q(0)=0$. By its construction, $\gamma_2(t)$ does not depend on
$q$.

\smallskip

Observe that, for any fixed $t$,
\[
q\overset{\xi^t}{\longmapsto}\gamma_q(t)
\]
is a parallel normal field of $N\subset\mathbb R^n$, where
$n=n_1+\dots+n_s$.

\smallskip

Our curve $\gamma _q(t)$ has a slight difference with that in  Theorem~\ref{thm:main1} since it is a curve in the normal space of $N$ at $q$ and $\gamma _q(0) = 0 \in \nu_qN$. Thus, we have not to subtract the initial point in order to define the parallel normal field $\xi ^t$.

\medskip

From (\ref{eq:49}), (\ref{eq:50}), and (\ref{eq:51}) we obtain that 

\begin{equation}\label{eq:53}
  \langle \tilde\eta _i , \xi^t\rangle =  1-\cos(a_it) + \frac{\mu_i}{a_i}\sin(a_it)
\end{equation}
 and therefore
 \begin{equation}\label{eq:54}
  \langle (\tilde\eta _i)_q , \gamma _q'(t)\rangle =  {\mu_i}\cos(a_it) + a_i\sin(a_it)
\end{equation}
which does not depend on $q\in N$. Observe that $\mu_i$ is the component of $(\tilde\eta_i)_q$ in the
direction of $\gamma_q'(0)$. Hence, from our assumptions, $a_i$ is the
length of the component of $(\tilde\eta_i)_q$ which is perpendicular to
$\gamma_q'(0)$. 

Let $$f_{q,i}(t):= 1-  \langle (\tilde\eta _i)_q , \gamma _q(t)\rangle = 
\cos(a_it) - \frac{\mu_i}{a_i}\sin(a_it)
.$$
Then the curvature normals of 
$N_{\xi ^t}$ at $ q + \gamma _q(t)$ are 
\begin{equation}\label{eq:55}
 (\tilde \eta _i^t)_{q + \gamma _q(t)} = \tfrac{1}{1- \langle  (\tilde \eta _i)_q, \xi ^t_q \rangle} \, (\tilde \eta _i)_q 
 = \tfrac{1}{1- \langle  (\tilde \eta _i)_q, \gamma _q(t) \rangle} \, (\tilde \eta _i)_q = \frac{1}{f_{q,i}(t)}(\tilde \eta _i)_q 
\end{equation}
$i=1, \dots , s$.

Define $M$ as the union of parallel manifolds of $N$. Namely, 
\[
M=\bigcup_{t\in(-\epsilon,\epsilon)}N_{\xi^t}
\qquad\text{(locally)},
\]
The curvature normals of $M$ at $q+\gamma_q(t)$ are the projections
$\eta_i$ onto the normal space of $M$ of the curvature normals
$\tilde\eta_i^t$ of $N_{\xi^t}$. We have to add one more curvature
normal $\eta$, which at the point $q+\gamma_q(t)$ is given by
$\gamma_q''(t)$; this has constant length by the construction of
$\gamma_q(t)$.
The arguments are the same as those in the proof of
Lemma~\ref{lem:n_k}. The fact that $\eta_i$ has constant length follows
from the same calculation as the one given in (\ref{eq:a_i^2}) and the surrounding 
formulas. Thus, $M$ has $\nu_0M$-curvature normals of constant length, with
$g=s+1$ curvature normals in total: $\eta$ corresponds to $\eta_k$, and
$\eta_1,\dots,\eta_s$ correspond, in some order, to the remaining
curvature normals $\eta_i$, $i\in\{1,\dots,g\}\setminus\{k\}$, in the
notation used throughout the previous sections.

Observe that $f_{q,i}(t)$ is not constant, for all $i=1,\dots,s$. In the
notation of Section~\ref{sec:long}, we have $I_1=\{1,\dots,s\}$. Then,
$$
 \langle\eta_i,\eta\rangle=-f_i''/f_i=a_i^2\neq0.
$$

This implies that $M$ is not locally a product. Indeed, otherwise, the
curvature normals of $M$ would decompose into two subsets that are
mutually orthogonal.

This finishes the construction of the examples showing that an
arbitrary $N$ of this type is a generating hypersurface of a submanifold
with curvature normals of constant length. We summarize the
construction of this section in the following proposition.

\begin{proposition}\label{prop:examples}
Let $N=N_1\times\dots\times N_s$, where each $N_i\subset\mathbb R^{n_i}$
is a full submanifold that is either as in Theorem~\ref{thm:main1B}(A)
or one-dimensional, contained in a sphere, and with curvature normal of
constant length. Then $N$ is (locally) the generating hypersurface of a
locally irreducible submanifold $M$, with $g=s+1$ curvature normals
with respect to $\nu_0M$, all of constant length.
\end{proposition}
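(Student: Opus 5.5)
The proof is a verification that the construction preceding the statement does what is claimed; I will organise it in four steps.

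\emph{Step 1 (regularity).} First I check that $M$ is a smooth immersed submanifold of dimension $\dim N+1$ near $t=0$. The map $(q,t)\mapsto q+\gamma_q(t)$ is an immersion at $t=0$: its differential is $\mathrm{Id}$ on $T_qN$ and sends $\partial_t$ to $\gamma_q'(0)$, which is normal to $N$. The parallel manifolds $N_{\xi^t}$ are non-singular for small $t$ because $f_{q,i}(t)\neq0$ there. Since $\xi^t$ is parallel with $\xi^t_q\in\nu_0N$, the tangent space of $N_{\xi^t}$ at $q+\gamma_q(t)$ is $T_qN$. Because $\gamma_q'(t)\in\nu_qN$, we get $T_{q+\gamma_q(t)}M=T_qN\oplus\mathbb R\gamma_q'(t)$.

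\emph{Step 2 (eigendistributions and normal bundle).} Next I identify the normal bundle and the eigendistributions of $M$. The plan is to show that $\mathbb R\gamma_q'$ and the $T N_i$ are simultaneously invariant under all shape operators of $M$, and that $\alpha(\gamma',\gamma')=\gamma''$, exactly as in Lemma~\ref{lem:n_k}. I would reverse Lemma~\ref{lem:4.2}: the lines $t\mapsto q+\gamma_q(t)$ are curves of $M$ whose tangent planes along $N_{\xi^t}$ contain $T_qN$, which is constant in $t$. Hence the shape operators of $M$ restricted to $T N_{\xi^t}$ are those of $N_{\xi^t}$ projected to $\nu M$. For a normal vector $\zeta$ in the flat part, this projection acts on $TN_i$ as $\langle \tilde\eta_i^t,\zeta\rangle\,\mathrm{Id}$ by \eqref{eq:55}. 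Thus the curvature normals of $M$ along $TN_i$ are the projections $\eta_i$ of $\tilde\eta_i^t$ orthogonal to $\gamma'$. The extra curvature normal is $\eta=\gamma_q''(t)$.

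The flatness of the relevant normal subbundle follows because it is spanned by parallel-transported constant directions: $\nu_0N$ is constant along $t$, and we remove the moving line $\mathbb R\gamma'$. Then the Ricci identity gives commutation.

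\emph{Step 3 (constant lengths).} The lengths are now computed directly. By \eqref{eq:84}, $|\eta_i|^2=(|\tilde\eta_i|^2-f_{q,i}'^2)/f_{q,i}^2$. With $f_{q,i}=\cos(a_it)-\frac{\mu_i}{a_i}\sin(a_it)$ and condition (1), $\mu_i^2+a_i^2=|\tilde\eta_i|^2$, identity \eqref{eq:86} holds, so $|\eta_i|=a_i$. For $\eta$, the length $|\gamma''|$ equals the curvature $c$ of $\gamma_q$, which is constant by the choice of $\gamma_2$.

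\emph{Step 4 (irreducibility).} Finally, I compute $\langle\eta_i,\eta\rangle=\langle\tilde\eta_i,\gamma''\rangle/f_{q,i}=-f_{q,i}''/f_{q,i}=a_i^2\neq0$. If $M$ split locally as a product, its curvature normals would fall into two mutually orthogonal nonempty families, with each eigendistribution tangent to one factor. The normal $\eta$ is non-orthogonal to every $\eta_i$, so all normals lie in the same family, which is a contradiction.

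The main obstacle is Step 2: making rigorous that the flat normal bundle and the eigendistributions of $M$ are exactly as claimed, in particular that $\alpha(T N_i,\gamma')=0$. For this I would use that $\gamma'$ is a linear combination, with coefficients independent of $q$, of the parallel fields $\xi_i$ and the constant vectors in the $\mathbb R^2$ factor. I also need $\gamma_2$ chosen so that $\gamma''\perp TN$, which holds since everything lies in $\nu N$.
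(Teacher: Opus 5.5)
Your proposal is correct and is essentially the paper's own construction-and-verification. In both, $M$ is the union of the parallel manifolds $N_{\xi^t}$, and its $\nu_0M$-curvature normals are the components of $\tilde\eta_i^t=f_{q,i}^{-1}\tilde\eta_i$ orthogonal to $\gamma_q'$, together with $\eta=\gamma_q''$. The lengths are constant because $a_i^2f_{q,i}^2+(f_{q,i}')^2=\mu_i^2+a_i^2$, and irreducibility follows from $\langle\eta_i,\eta\rangle=a_i^2\neq0$. Your derivation of $\alpha(TN,\gamma')=0$ from the $q$-independent coefficients of $\gamma'$ simply makes explicit what the paper delegates to the argument of Lemma~\ref{lem:n_k}.

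One small point to add: take $c>\max_i a_i$, which is allowed since $c$ can be chosen arbitrarily large. From $\langle\eta_i,\eta\rangle=a_i^2$ and Cauchy--Schwarz you get $c\geq a_i$, with equality only when $\eta=\eta_i$. So this choice is what guarantees $g=s+1$ and that $\eta$ plays the role of the longest curvature normal $\eta_k$, i.e.\ that $N$ is indeed the generating hypersurface.
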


\smallskip

\begin{remark}\label{rem:gamma_2}
We recall the following elementary fact. Given a $C^2$ curve $\gamma_1:I\to\mathbb R^n$ with $0<|\gamma_1'(0)|<1$, after restricting $I$ if necessary, one can find a $C^2$ curve $\gamma_2:I\to\mathbb R^2$ such that the curve
$$
\gamma=(\gamma_1,\gamma_2):I\to\mathbb R^{n+2}
$$
has unit speed and $|\gamma''|$ is constant. Moreover, this constant can be chosen arbitrarily large. Indeed, set

$$
s(t)=\sqrt{1-|\gamma_1'(t)|^2}.
$$

and let $\gamma '_2(t) = s(t)(\cos (\theta (t)), \sin(\theta (t)))$, where 
$\theta(t)$ has to be determined from the condition that $\gamma (t) = (\gamma _1(t),\gamma _2(t))$ has curvature of constant length. This gives 
$$\theta '(t) = \frac{1}{s(t)}\sqrt{c^2 - \gamma _1'' (t)^2 - s'(t)^2 }.$$
Integrating this expression we obtain 
$\theta (t)$ (we can choose the initial condition to be $\theta (0) = 0$. 
Then $$\gamma _2(t) =\int _0^t s(t)\big( \cos(\theta (t)), \sin(\theta(t))\big)$$ satisfies 
that the curve $\gamma (t) = (\gamma _1(t), \gamma _2(t))$ has unit speed and curvature of constant length $c$.
\end{remark}

\appendix\section{Submanifolds with constant principal curvatures}\label{App:A}

In this appendix, we collect some basic facts about submanifolds with
constant principal curvatures. Some of these results do not seem to be
available in the literature, and we include full proofs.

\medskip

Let us recall some standard definitions.  The \textit{rank} of a submanifold $M$ of Euclidean space, denoted by $\rank (M)$, is the dimension of $\nu_0M$, where $\nu _0M$ is the flat part of the normal bundle (see \cite{DO, BCO}).  The submanifold $M$ is said to have \textit{constant principal curvatures} if, along every curve in $M$, the shape operator with respect to any parallel normal field has constant eigenvalues.
Submanifolds with constant principal curvatures are either isoparametric if the normal bundle is flat, or focal parallel manifolds of isoparametric submanifolds 
(see \cite{HOT, BCO}). From Thorbergsson's theorem \cite{Th} one obtains that any complete irreducible and full submanifold with constant principal curvatures and rank at least $2$ is either an inhomogeneous isoparametric hypersurface of the sphere or an orbit of an irreducible $s$-representation (any such local submanifold extends to a compact embedded one, up to an extrinsic Euclidean factor). 

\smallskip

Let $K$ act on $\mathbb R ^N$ as an irreducible $s$-representation. Then any non-trivial $K$-orbit is an irreducible and full submanifold.
For the sake of clarity, we review some general known facts. 
We next describe each $K$-orbit and its rank with respect to an arbitrarily fixed principal orbit $K\cdot q$.
Let $\eta _1 , \dots , \eta _d$ be the curvature normals at $q$ associated to the irreducible isoparametric submanifold $K\cdot q$. Let $\ell _i$ be the hyperplane of $\nu _qM$ defined by the equation $\langle \eta _i , \cdot \rangle = 1$, $i=1, \dots , d$. 
The finite group generated by the reflections across all of these hyperplanes is the so-called \textit{Weyl} group associated with the isoparametric submanifold $K\cdot q$ \cite{PT} (see also \cite{BCO}). A \textit {Weyl chamber} $\bar U$ is the closure of a connected component 
of $\nu _q(K \cdot q)\setminus \bigcup _{i=1}^d\ell _i$. The interior of any two distinct Weyl chambers are disjoint. The Weyl group acts simply transitively on the set of Weyl chambers.  Let $\bar U$ be a fixed Weyl chamber. Then,  any $K$-orbit $M$ determines  a unique $p\in \bar U$ such that $M=K\cdot p$. Let $p\in \sigma$, where $\sigma$ is the $k$-simplex of $\bar U$ that contains $p$. Let 
$$I(\sigma)  = \{i, 1\leq i\leq d: \sigma \subset \ell _i\}\, ,  \quad \quad
V(\sigma) = \bigcap _{i\in I}\ell _i \, . $$ 
We have that $(\nu _0M)_p = V(\sigma)$ and hence $\rank (M)= \dim V(\sigma) = k$. If $\xi\in V(\sigma)$ is small, then $p+ \xi \in \sigma$.

The normal holonomy representation of $M$ at $p$ coincides with the slice representation  
of the isotropy group $K_p$ on $\nu _pM$ (see \cite{HO, BCO}). Observe that any 
$k\in K_p$ maps the simplex $\sigma$ into the simplex $k\sigma$ . Since both simplex contains $p$, we conclude that $k\sigma = \sigma$. If $z\in \sigma$, then $K\cdot (kz) = K\cdot z$. Since any $K$-orbit has a unique representative in $\bar U$, we conclude that $kz=z$ and thus $K_p$ fixes $\sigma$ pointwise. Since for a small $\xi \in (\nu _0M)_p$, $p + \xi \in \sigma$,  we obtain that $K_p$ fixes $(\nu _0M)_p$ pointwise. This implies that $\nu _0M$ is globally flat. 

Since, for any orbit $K\cdot p$ of an $s$-representation, the parallel transport in the normal bundle is given by $K$, every curvature normal $\eta_1,\dots,\eta_g$ is $K$-invariant (see the main theorem and Definition 1.2 of \cite{OS}; see also Theorem 4.4.3 of \cite{BCO}).

Let $M^n$ be a compact (embedded) full and irreducible submanifold  of $\mathbb R ^N$ with constant principal curvatures and rank at least $2$, i.e. 
$\rank (M) =\dim \nu _0M\geq 2$. We may assume that $M$ is complete.
The bundle $\nu _0M$ is globally flat. In fact, if   $M$ is an isoparametric hypersurface of the sphere, then $\nu _0M$ coincides with the globally flat normal bundle $\nu M$ in the Euclidean space. If $M$ is not an isoparametric hypersurface of the sphere, then 
 $M=K\cdot p $, where $K$ acts on $\mathbb R ^N$ as  an irreducible  $s$-representation. 
Let $\xi\in (\nu _0M)_p^\perp $ be a (short) principal vector for the 
normal holonomy group of $M$ at $p$, 
or equivalently, a principal vector for the slice representation $K_p$ on $(\nu _0M)_p^\perp$.  Then the holonomy tube 
$K\cdot (p+\xi)$ has a flat normal bundle and hence it is isoparametric. The assertion follows from the fact that isoparametric submanifolds have globally flat normal bundle. 

Each curvature normal $\eta_1,\ldots,\eta_g$ of $M$ with respect to $\nu_0M$ is a (globally defined) parallel section of $\nu_0M$. The curvature normals have an associated decomposition into eigendistributions,
$$
TM = E_1\oplus \cdots \oplus E_g
$$
where each eigendistribution is autoparallel and invariant under all shape operators of $M$ (equivalently, $\alpha(E_i,E_j) = 0$ if $i\neq j$). The curvature normals span $\nu_0M$, since any parallel normal field perpendicular to this span must be constant in the ambient space.  Observe that $g\geq 2$, since the rank of $M$ is at least $2$.

\medskip

Let us denote by $S_i(p)$ the integral manifold of $E_i$ that contains $p$. Observe that $S_i(p)$ is a submanifold with constant principal curvatures of the ambient space, invariant under all shape operators of $M$ restricted to $S_i(p)$. Moreover,  it  is contained in a proper affine subspace. Namely, 
\begin{equation}\label{eq:proper9}
S_i(p) \subset p + E_i(p) \oplus \nu _pM \subset \mathbb R^{n+p} .
\end{equation}

\begin{lemma}\label{lem:398}
Let $M$ be a complete full submanifold with constant principal curvatures of Euclidean space, with curvature normals $\eta_1,\dots,\eta_g$ and eigendistributions $E_1,\dots,E_g$ with respect to $\nu_0M$. Then, for every $p\in M$, $i=1, \dots , g$, 
\begin{enumerate}
    \item \  
$\displaystyle{(\eta_i)_p=\frac{1}{\dim E_i}\,H_i(p)}$.
where $H_i(p)$ denotes the mean curvature vector at $p$ of $S_i(p)$ viewed as a submanifold of the ambient space.
\item \  $\rank (S_i(p))=1$, where $S_i(p)$ is regarded as a full submanifold of its affine span.
\end{enumerate}
\end{lemma}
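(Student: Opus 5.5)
For part (1), I would work with the leaf $S_i(p)$ of $E_i$ through $p$ and compare its second fundamental form with that of $M$. Since $E_i$ is autoparallel, $S_i(p)$ is totally geodesic in $M$. Hence, for $v,w\in E_i$, its second fundamental form in the ambient space is $\alpha^{S_i}(v,w)=\alpha(v,w)$. Tracing over an orthonormal basis of $E_i(p)$ gives $H_i(p)=\sum_{l}\alpha(e_l,e_l)$, and it remains to show that this equals $(\dim E_i)\,(\eta_i)_p$. The $\nu_0M$-component is immediate from \eqref{eq:00}: for $\xi\in(\nu_0M)_p$ we have $\langle\alpha(e_l,e_l),\xi\rangle=\langle A_\xi e_l,e_l\rangle=\langle\xi,\eta_i\rangle$. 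So the whole content of (1) is that the component of $\sum_l\alpha(e_l,e_l)$ in $(\nu_0M)_p^\perp$ vanishes.

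This is the step I expect to be the main obstacle. I would use the structure theory recalled above: either $M$ is an isoparametric hypersurface of a sphere, so that $\nu M=\nu_0M$ and there is nothing to prove, or $M=K\cdot p$ for an $s$-representation. In the second case I would pass to a principal holonomy tube $\hat M=K\cdot(p+\xi)$ with $\xi\in(\nu_0M)_p^\perp$ small and principal. This tube is isoparametric, and $M$ is a focal manifold of it. Each $E_i(p)$ lifts, via the tube formula, to a direct sum of curvature distributions $\hat E_j$ of $\hat M$, and the remaining curvature distributions of $\hat M$ make up the vertical fibre. The shape operator of $M$ in a direction $\zeta\in(\nu_0M)_p^\perp$ is then read off from $\hat M$. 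Restricted to $E_i(p)$, its eigenvalues should be of the form $\langle\hat\eta_j,\zeta\rangle/(1-\langle\hat\eta_j,\xi\rangle)$. The trace over $E_i(p)$ vanishes because these eigenvalues are permuted, with sign changes, by the slice representation $K_p$, which is itself an $s$-representation of the reduced Weyl group. Concretely, $K_p$ fixes $(\nu_0M)_p$ and acts on $(\nu_0M)_p^\perp$ without nonzero fixed vectors. The $E_i(p)$-trace of $A_\zeta$ is a $K_p$-invariant linear functional in $\zeta$, because the $E_i$ are $K$-invariant, as noted above, and $K_p$ preserves $E_i(p)$. A $K_p$-invariant linear functional on $(\nu_0M)_p^\perp$ must vanish, which proves (1). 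I would try this invariance argument first, since it avoids the explicit tube computation.

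For part (2), I would show that $S_i(p)$ has constant principal curvatures. It is the orbit of the subgroup of $K$ stabilising the leaf, or equivalently a leaf of an isoparametric-type foliation lifted from $\hat M$. By \eqref{eq:proper9} it spans a proper affine subspace. Its position vector relative to $p+(\dim E_i)^{-1}$-adjusted centre $p+\|\eta_i\|^{-2}\eta_i$, using (1), is a parallel umbilical normal field, so $\rank(S_i(p))\ge 1$. To show that the rank is exactly $1$, suppose $\rank\ge2$. Then $S_i(p)$ would carry at least two distinct curvature normals with respect to its own flat normal bundle, and hence at least two eigendistributions inside $E_i$. But the curvature normals of $S_i(p)$ in $\nu_0 S_i(p)$ project into $\nu_0M$, and there all of $E_i$ has the single eigenvalue $\eta_i$. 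I would combine this with the restriction of $\nu_0S_i$ to the span of $E_i$ and $\nu_pM$ to obtain a contradiction, using that an irreducible $s$-representation slice gives the leaf as an orbit of a rank-one symmetric space, that is, a sphere-type orbit.
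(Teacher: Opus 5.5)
Your part (1) is correct and is the paper's argument in dual form. The paper shows directly that $kH_i(p)=H_i(p)$ for $k\in K_p$, because $K_p$ preserves $E_i(p)$, so $H_i(p)$ lies in the $K_p$-fixed subspace $(\nu_0M)_p$. Your invariant-linear-functional version is equivalent, and the tube/Weyl-group sketch is unnecessary.

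Part (2), however, has a genuine gap. Assuming $\rank S_i(p)\ge 2$, you correctly get a direction $\zeta\in(\nu_0S_i(p))_p$ whose shape operator has two eigenvalues on $E_i(p)$. Since $S_i(p)$ is totally geodesic in $M$ and $\alpha(E_i,E_i^\perp)=0$, this operator is $A_\zeta|_{E_i(p)}$ for $M$. It contradicts \eqref{eq:00} only if $\zeta\in(\nu_0M)_p$. So you need $(\nu_0S_i(p))_p\subset(\nu_0M)_p$, and this is precisely the step you do not prove. Your sentence that the curvature normals of $S_i(p)$ ``project into $\nu_0M$'' either assumes this inclusion or, read as an orthogonal projection, gives nothing. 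Two distinct curvature normals of $S_i(p)$ can have the same $\nu_0M$-component and differ in $(\nu_0M)_p^\perp$, where the shape operators of $M$ need not be umbilical on $E_i$. Nor is the inclusion automatic. The normal bundle of $S_i(p)$ in its span is a parallel subbundle of $\nu M_{|S_i(p)}$, but only loops in $S_i(p)$ contribute holonomy, so its flat part could a priori be larger than that of $\nu_0M$.

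The paper obtains the inclusion from the group structure:
\begin{itemize}
\item Take $\xi$ parallel with $\langle\eta_j,\xi\rangle=1$ if and only if $j=i$. Then $S_i(p)$ is the fibre through $p$ of the focal map $x\mapsto x+\xi(x)$.
\item Hence $S_i(p)$ is an orbit of the slice representation of $(K_{p+\xi(p)})^o$, which is, up to a trivial factor, an $s$-representation.
\item The isotropy of this orbit at $p$ has identity component $(K_p)^o$.
\item Therefore $(\nu_0S_i(p))_p$ is the fixed set of $(K_p)^o$ on the normal space of $S_i(p)$ in its span. This lies in the fixed set of $(K_p)^o$ in $\nu_pM$, which is $(\nu_0M)_p$.
\end{itemize}

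Your proposed closing argument is also false: the leaf need not be an orbit of a rank-one symmetric space, nor a sphere. Take the adjoint representation of $SU(4)$ and $p$ with eigenvalues $(a,a,b,c)$, with $a,b,c$ distinct. Then $M=SU(4)\cdot p$ has rank $2$. The leaf of the $\nu_0M$-eigendistribution spanned by the root spaces of $e_1-e_3$ and $e_2-e_3$ is an adjoint $SU(3)$-orbit $\cong\mathbb{CP}^2$. It has rank one, but it is an orbit of a rank-two $s$-representation and is not a sphere (compare type (A) in Theorem~\ref{thm:main1B}).
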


\begin{proof}
   It suffices to consider the case where $M$ is irreducible and compact. Therefore, we may assume that $M$ is irreducible and compact. If $M$ is isoparametric the result is clear. 
So let us assume that $M= K\cdot p$ is an orbit of an $s$-representation. In this case the normal holonomy representation at $p$ coincides with the slice representation of $K_p$. In particular, the set of fixed vectors of $K_{p|\nu _pM}$ coincides with $(\nu _0M)_p$. Let $\alpha$ and $\alpha^i$ be the second fundamental form of $M$ and $S_i(p)$, respectively. Let 
$e_1, \cdots , e_d$ be an orthonormal basis of $T_pS_i(p) = (E_i)_p$, where $d=\dim E_i$.
Then $$H_i(p)  = \sum\limits _{j=1}^d\alpha ^i (e_j, e_j) = \sum\limits _{j=1}^d\alpha (e_j, e_j).$$
If $k\in K_p$, then $$ H_i(p) = \sum\limits _{j=1}^d\alpha (e_j, e_j)= \sum\limits _{j=1}^d\alpha (ke_j, ke_j) = \sum\limits _{j=1}^d k\alpha (e_j, e_j) =kH_i(p).$$
Then $H_i(p)\in (\nu _0M)_p$. Now observe that for any $\xi \in \nu_0M$  (we have used that $K_p$ leaves invariant the eigenspace decomposition),
$$\langle \alpha (e_j,e_j), \xi\rangle  = \langle A_{\xi}e_j, e_j\rangle 
 = \langle (\eta _i)_p , \xi \rangle \langle e_j, e_j\rangle  = \langle (\eta _i)_p , \xi\rangle
.$$
Hence, $$\frac 1d H_i(p) = (\eta _i)_p .$$
This proves (1). 

Let $\xi$ be a parallel normal field to $M$ such that $\langle \eta _j , \xi\rangle =1$ if and only if $j=i$. Let us consider the  parallel focal manifold $M_\xi$ and the parallel endpoint map $\pi : M\to \mathbb R^{n+p}$, $\pi (x) = x +\xi (x)$. Then the $\ker \mathrm d \pi = E_i$ and the connected component containing $p$ of $\pi ^{-1}(\{\pi (p)\})$ coincides with $S_i(p)$. Since $\xi$ is $K$-invariant, $\pi$ is $K$-equivariant. This implies that 
the connected isotropy group $(K_{p+\xi(p)})^o$ coincides with the identity component of subgroup of $K$ that leaves invariant $S_i(p)$.  Then $S_i(p)$ is an orbit of the slice representation of $(K_{p+\xi(p)})^o$ on 
$\nu _{p +\xi (p)}\pi (M) = \nu _{p +\xi (p)}(K\cdot (p +\xi (p)))$. 
Note that  the image of this slice representation is, up to a trivial factor, an $s$-representation which coincides with the normal holonomy representation. Also observe that 
$K^o_p$ coincides with the connected isotropy at $p$ of $K_{p+\xi(p)}$, since the distribution $E_i$ is $K$-invariant. Namely,   
$$(((K_{p+\xi(p)})^o)_p)^o = ((K_{p+\xi(p)})_p)^o = K^o_p\, .$$
Putting together all this information, we obtain that  $(K_{p+\xi(p)})^o$ preserve the affine subspace $L$ generated by $S_i(p)$ and $(\nu _0S_i(p))_p\subset T_pL$ coincides with the set of fixed vectors of $(K_p)^o$ on the normal space of $S_i(p)\subset L$. We have used that the effective made action of $(K_{p+\xi (p)})^o$ on $L$ is an $s$-representation. Now observe that 
$(\nu _0S_i(p))_p\subset (\nu  M)_p$ and that $(\nu _0S_i(p))_p$ is pointwise fixed by $(K_p)^o$. 
Then $$(\nu _0S_i(p))_p\subset (\nu _0 M)_p \, .$$
If $\dim \,  (\nu _0S_i(p))_p\geq 2$, then there exists $\eta \in (\nu _0S_i(p))_p\subset (\nu _0 M)_p $ such that the shape operator $A_{\eta \vert T_pS_i(p)} $  of $M$ has at least two different eigenvalues. A contradiction that proves (2). 
\end{proof}

\medskip

Since $M$ is contained in a sphere, $(\eta_i)_p$ and $(\eta_j)_p$ are linearly independent  for all $p\in M$ whenever $i\neq j$. In fact, the radial vector field $\vec r$ is a parallel normal vector field and $\langle \vec r , \eta _i\rangle = 1 = \langle \vec r , \eta _j\rangle $. 
Let $i,j\in \{1,\ldots,g\}$, with $i\neq j$, and define $D_{i,j}$ as the subset of $\{1,\dots,g\}$ consisting of those $k$ such that $\eta_k$ lies in the parallel subbundle
\[
 \mathbb V_{i,j} :=\mathbb{R}\eta_i\oplus\mathbb{R}\eta_j
\]
of $\nu _0M$.
Observe that if $k,k'\in D_{i,j}$ with $k\neq k'$, then $D_{i,j}= D_{k,k'}$ and $\mathbb V_{i,j}= \mathbb V_{k,k'}$.

Let $i,j\in \{1, \dots , g\}$, with $i\neq j$ and let 
\begin{equation}\label{eq:Eij}
  E_{i,j} : = \bigoplus _{k\in D_{i,j}} E_k 
\end{equation}
The distribution $E_{i,j}$ is invariant under all the shape operators of $M$ (equivalently, 
$\alpha (E_{i,j}, E_{i,j}^\perp)= 0$).
Then $E_{i,j}$ is an autoparallel distribution of $M$. In fact,  

\begin{equation}\label{eq:autij}
    E_{i,j} = \bigcap _{\xi}\ker A_\xi
\end{equation}
where $\xi$ is a parallel section of $\mathbb V^\perp _{i,j}$. If $\zeta$
is any  generic parallel section of $\mathbb V^\perp _{i,j}$, then 
\begin{equation}\label{eq:autij2}
    E_{i,j} = \ker A_\zeta
\end{equation}
Let $M_{i,j}(p)$ be the (totally geodesic) integral manifold of $E_{i,j}$ that contains $p$. Observe that $M_{i,j}(p)$ is a submanifold with constant principal curvatures in the ambient space. Moreover,
\begin{equation}\label{eq:Mij}
M_{i,j}(p)\subset
p + (E_{i,j})_p \oplus \nu _pM,
\end{equation}
which need not be full in this affine space.
For consistency, we  define $D_{i,i}= \{i\}$ and  $\mathbb V_{i,i} = \mathbb R\eta_i$. In particular, $M_{i,i}(p)$ coincides with the (totally geodesic) integral manifold $S_i(p)$ of $E_i$ that contains $p$. 

One has that $\bar{\mathbb V}_{i,j}:=\mathbb V_{i,j|M_{i,j}(p)}$ is a parallel and flat subbundle of $\nu  _0M_{i,j}(p)$. Moreover, $\bar{\eta}_k: = \eta _{k|M_{i,j}(p)}$,  are the (parallel) curvature normals of $M_{i,j}(p)$ with respect to $\bar{\mathbb V}_{i,j}$, $k\in D_{i,j}$.
Note that $M_{i,j}(p) = M$ if and only if $\rank (M)=2$, $i\neq j$.

\begin{lemma}\label{lem:rank2} We keep the notation and assumptions of this section. If $|D_{i,j}|=2$, then $M_{i,j}(p) = S_i(p)\times S_j(p)$ for all $p\in M$, and $\eta_i\perp \eta_j$.
\end{lemma}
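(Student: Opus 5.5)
Fix $p\in M$ and write $V=M_{i,j}(p)$ with $D_{i,j}=\{i,j\}$, so that $TV=E_i\oplus E_j$ restricted to $V$. First I would check that $V$, with respect to the parallel flat subbundle $\bar{\mathbb V}_{i,j}$, satisfies the hypotheses of Lemma~\ref{lem:aux9}. Its eigendistributions are $E_{i|V}$ and $E_{j|V}$, and they are distinct because $\eta_i$ and $\eta_j$ are linearly independent ($M$ lies in a sphere). The curvature normals $\bar\eta_i,\bar\eta_j$ are parallel. Each $E_{l|V}$ is autoparallel in $V$, since $E_l$ is autoparallel in $M$ and $V$ is totally geodesic in $M$. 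With only two summands, the orthogonal complement of $E_{i|V}$ is $E_{j|V}$, which is autoparallel and hence integrable. Lemma~\ref{lem:aux9} then gives that $V$ splits extrinsically, locally, as $S_i(p)\times S_j(p)$, with the factors being the integral manifolds of $E_i$ and $E_j$ through $p$. Completeness of $M$ (and of the totally geodesic leaves) should upgrade this to a global statement. Alternatively, one can invoke Moore's lemma directly, using $\alpha(E_i,E_j)=0$ and the parallelism of both distributions.

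For the orthogonality, I would use the product structure. In an extrinsic product $S_i\times S_j\subset \mathbb A_i\times\mathbb A_j$, the curvature normal of the factor $S_i$ lies in the normal space of $S_i$ inside $\mathbb A_i$, and that of $S_j$ lies inside $\mathbb A_j$. By Lemma~\ref{lem:398}(1), $(\eta_i)_p=\frac{1}{\dim E_i}H_i(p)$, where $H_i$ is the mean curvature vector of $S_i(p)$ in the ambient space. The factor $S_i(p)$ lies in the affine subspace $p+\mathbb A_i$, and $S_j(p)$ lies in $p+\mathbb A_j$, with $\mathbb A_i\perp\mathbb A_j$. Hence $H_i(p)\in\mathbb A_i$ and $H_j(p)\in\mathbb A_j$ are orthogonal, which gives $\eta_i\perp\eta_j$.

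The main obstacle is the subtle point of the splitting: $V$ need not be full in $p+(E_{i,j})_p\oplus\nu_pM$. One must therefore make sure that Moore's lemma produces orthogonal affine spans for the two factors, so that the mean curvature vectors genuinely lie in orthogonal subspaces. Moore's lemma does give this, since the first normal spaces of the factors are orthogonal: $\alpha(E_i,E_i)\perp\alpha(E_j,E_j)$ follows from the Gauss equation. Indeed, $R(E_i,E_j)=0$ by the product structure, so $\langle\alpha(x,x),\alpha(y,y)\rangle=\|\alpha(x,y)\|^2=0$ for $x\in E_i$, $y\in E_j$. I would make this argument explicit, since it alone already gives $\eta_i\perp\eta_j$ through Lemma~\ref{lem:398}(1).
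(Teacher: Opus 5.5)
Your proposal is correct and follows essentially the paper's route. The restrictions of $E_i$ and $E_j$ to $M_{i,j}(p)$ are complementary, orthogonal, autoparallel (hence parallel) distributions with $\alpha(E_i,E_j)=0$, so Moore's lemma gives the extrinsic splitting into factors lying in orthogonal affine subspaces, and Lemma~\ref{lem:398}(1) then yields $\eta_i\perp\eta_j$. Your Gauss-equation check that $\alpha(E_i,E_i)\perp\alpha(E_j,E_j)$ only spells out a step that is already part of Moore's lemma and that the paper leaves implicit.
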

\begin{proof}
We have that $\bar E_i = E_{i|M_{i,j}(p)}$ and $\bar E_j = E_{j|M_{i,j}(p)}$ are mutually orthogonal and complementary autoparallel distributions; thus, they are parallel. Since $\alpha(\bar E_i,\bar E_j) = 0$, we obtain that $M_{i,j}(p)$ splits extrinsically as $M_{i,j}(p) = S_i(p) \times S_j(p)$ (we have used the so-called Moore's lemma \cite{Mo}; see also \cite[Lemma 1.7.1]{BCO}). By Lemma \ref{lem:398}(1), it follows that $\eta_i(p)\perp\eta_j(p)$. Since $\eta_i$ and $\eta_j$ are parallel, we conclude that $\eta_i\perp\eta_j$ everywhere.
\end{proof}

\begin{corollary}\label{cor:cpc}
Let $M\subset \mathbb{R}^N$ be a complete, full, and irreducible submanifold with constant principal curvatures and $\rank M\geq 2$, with associated curvature normals $\eta_1,\dots,\eta_g$, and eigendistributions $E_1, \dots , E_g$. Then for any $i\in \{1, \dots , g\}$ there exists $j\in\{1,\dots,g\}$, $i\neq j$, such that $|D_{i,j}|\geq 3$.
\end{corollary}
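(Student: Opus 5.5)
We argue by contradiction. Fix $i$ and assume that $|D_{i,j}|=2$ for every $j\neq i$. By Lemma~\ref{lem:rank2}, this gives $\eta_i\perp\eta_j$ for every $j\neq i$. It also gives, for each $j\neq i$, the local product $M_{i,j}(p)=S_i(p)\times S_j(p)$.

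We first use this orthogonality to make $E_i$ a parallel distribution, that is, to show that both $E_i$ and $E_i^\perp=\bigoplus_{j\neq i}E_j$ are autoparallel. Choose a parallel section $\zeta$ of $\nu_0M$ with $\zeta\perp\eta_i$ and $\langle\zeta,\eta_j\rangle\neq0$ for all $j\neq i$. Such a $\zeta$ exists because each $\eta_j$ with $j\neq i$ is linearly independent of $\eta_i$; this follows from $M$ lying in a sphere, as noted before the definition of $D_{i,j}$. A generic choice of $\zeta$ in $\eta_i^\perp$ then avoids the finitely many hyperplanes $\eta_j^\perp$.

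With this choice, $\ker A_\zeta=E_i$. On the other hand, the orthogonality $\eta_i\perp\eta_j$ for all $j\neq i$ gives $A_{\eta_i}|_{E_j}=0$ for $j\neq i$ and $A_{\eta_i}|_{E_i}=|\eta_i|^2\mathrm{Id}$. Hence $E_i^\perp=\ker A_{\eta_i}$.

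Both $\zeta$ and $\eta_i$ are parallel and have constant eigenvalues, so their shape operators satisfy the Codazzi identity with constant eigenvalues. The kernel of the shape operator of such a section is autoparallel; this is the same argument used for \eqref{eq:autij}. Therefore $E_i$ and $E_i^\perp$ are complementary autoparallel distributions, hence both are parallel. Since $\alpha(E_i,E_i^\perp)=0$ by \eqref{eq:-1}, Moore's lemma splits $M$ locally as $S_i(p)\times S'$. This contradicts irreducibility, because $\rank M\geq2$ forces $g\geq2$, so $E_i^\perp\neq0$.

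It remains to check the two hypotheses of the Moore's lemma step. First, $E_i^\perp\neq0$, which holds since $g\geq2$. Second, the eigenvalue $|\eta_i|^2$ must be nonzero, which holds because $\eta_i\neq0$ for a submanifold in a sphere, as $\langle\vec r,\eta_i\rangle=1$. We also note that the completeness and compactness assumptions are not needed beyond local irreducibility, since the argument is purely local.

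The main obstacle is the autoparallelity of $\ker A_\xi$ for a parallel $\xi$ with constant eigenvalues. The standard Codazzi argument runs as follows. Take $X,Y\in\ker A_\xi$ and $Z$ in another eigenspace of $A_\xi$ with eigenvalue $\lambda\neq0$. Then
\[
\langle(\nabla_XA_\xi)Y,Z\rangle=-\lambda\langle\nabla_XY,Z\rangle .
\]
By Codazzi this is symmetric in $X,Y$, so $\langle\nabla_XY,Z\rangle=\langle\nabla_YX,Z\rangle$. Next compute $\langle(\nabla_ZA_\xi)X,Y\rangle$. Since the eigenvalue on $\ker A_\xi$ is the constant $0$, this equals $0$. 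Using Codazzi once more, $\langle(\nabla_XA_\xi)Z,Y\rangle=\langle(\nabla_ZA_\xi)X,Y\rangle=0$. Expanding the left side gives $\lambda\langle\nabla_XZ,Y\rangle=0$, so $\langle\nabla_XY,Z\rangle=0$. This is exactly where the constancy of the eigenvalues enters.
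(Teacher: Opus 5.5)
Your argument is correct and is essentially the paper's proof. Lemma~\ref{lem:rank2} gives $\eta_i\perp\eta_j$ for all $j\neq i$, so $E_i$ and $E_i^\perp=\ker A_{\eta_i}$ are complementary autoparallel, hence parallel, distributions with $\alpha(E_i,E_i^\perp)=0$, and Moore's lemma then contradicts irreducibility. You only supply details the paper leaves implicit, namely the section $\zeta$ with $\ker A_\zeta=E_i$ and the Codazzi computation; in that computation the term $X(\lambda)\langle Z,Y\rangle$ vanishes because $Z\perp Y$, not because $\lambda$ is constant.
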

\begin{proof}
We may assume that $i=1$.
Suppose, to the contrary, that $|D_{1,j}|=2$ for all $j>1$. Then, by Lemma \ref{lem:rank2},
$\eta_1\perp\eta_j$ for all $j>1$.
Then $E_1$ and $\ker A_{\eta_1}=E_1^\perp=E_2\oplus\dots\oplus E_g$ are mutually orthogonal and complementary autoparallel distributions; thus, they are parallel. Since $\alpha(E_1,E_1^\perp)=0$, $M$ splits extrinsically by Moore's lemma. This is a contradiction.
\end{proof}

\begin{remark}\label{rem:A4} Keeping the notation and assumptions of this section, let $M$ be a Euclidean full and irreducible submanifold with constant principal curvatures and rank at least $2$. Let $i,j\in \{1, \dots , g\}$, $i\neq j$, and let $q\in M$. 
Then every curvature normal $(\eta _k)_q$, such that $k\in D_{i,j}$ lie in the line of the plane $(\mathbb V_{i,j})_q\subset \nu_0M$ given by the condition $\langle \vec r_q , \cdot \,\rangle =1$. This follows from the fact that $A_{\vec r}= Id$.
    
\end{remark}




\end{document}